\subjclass[2010]{11E08, 11E45}
\begin{document}
\title [Explicit formula for  Siegel series]{An explicit formula for  the  Siegel series of a quadratic form  over a non-archimedean local field
}
\author{Tamotsu IKEDA}
\address{Graduate school of mathematics, Kyoto University, Kitashirakawa, Kyoto, 606-8502, Japan}
\email{ikeda@math.kyoto-u.ac.jp}
\author{Hidenori KATSURADA}
\address{Muroran Institute of Technology
27-1 Mizumoto, Muroran, 050-8585, Japan}
\email{hidenori@mmm.muroran-it.ac.jp}
\date{27 September, 2017}
\subjclass[2010]{11E08, 11E95}
\keywords{Siegel series , Gross-Keating invariant, quadratic form}

\maketitle
\newcommand{\alp}{\alpha}
\newcommand{\bet}{\beta}
\newcommand{\gam}{\gamma}
\newcommand{\del}{\delta}
\newcommand{\eps}{\epsilon}
\newcommand{\zet}{\zeta}
\newcommand{\tht}{\theta}
\newcommand{\iot}{\iota}
\newcommand{\kap}{\kappa}
\newcommand{\lam}{\lambda}
\newcommand{\sig}{\sigma}
\newcommand{\ups}{\upsilon}
\newcommand{\ome}{\omega}
\newcommand{\vep}{\varepsilon}
\newcommand{\vth}{\vartheta}
\newcommand{\vpi}{\varpi}
\newcommand{\vrh}{\varrho}
\newcommand{\vsi}{\varsigma}
\newcommand{\vph}{\varphi}
\newcommand{\Gam}{\Gamma}
\newcommand{\Del}{\Delta}
\newcommand{\Tht}{\Theta}
\newcommand{\Lam}{\Lambda}
\newcommand{\Sig}{\Sigma}
\newcommand{\Ups}{\Upsilon}
\newcommand{\Ome}{\Omega}


\newcommand{\frka}{{\mathfrak a}}    \newcommand{\frkA}{{\mathfrak A}}
\newcommand{\frkb}{{\mathfrak b}}    \newcommand{\frkB}{{\mathfrak B}}
\newcommand{\frkc}{{\mathfrak c}}    \newcommand{\frkC}{{\mathfrak C}}
\newcommand{\frkd}{{\mathfrak d}}    \newcommand{\frkD}{{\mathfrak D}}
\newcommand{\frke}{{\mathfrak e}}    \newcommand{\frkE}{{\mathfrak E}}
\newcommand{\frkf}{{\mathfrak f}}    \newcommand{\frkF}{{\mathfrak F}}
\newcommand{\frkg}{{\mathfrak g}}    \newcommand{\frkG}{{\mathfrak G}}
\newcommand{\frkh}{{\mathfrak h}}    \newcommand{\frkH}{{\mathfrak H}}
\newcommand{\frki}{{\mathfrak i}}    \newcommand{\frkI}{{\mathfrak I}}
\newcommand{\frkj}{{\mathfrak j}}    \newcommand{\frkJ}{{\mathfrak J}}
\newcommand{\frkk}{{\mathfrak k}}    \newcommand{\frkK}{{\mathfrak K}}
\newcommand{\frkl}{{\mathfrak l}}    \newcommand{\frkL}{{\mathfrak L}}
\newcommand{\frkm}{{\mathfrak m}}    \newcommand{\frkM}{{\mathfrak M}}
\newcommand{\frkn}{{\mathfrak n}}    \newcommand{\frkN}{{\mathfrak N}}
\newcommand{\frko}{{\mathfrak o}}    \newcommand{\frkO}{{\mathfrak O}}
\newcommand{\frkp}{{\mathfrak p}}    \newcommand{\frkP}{{\mathfrak P}}
\newcommand{\frkq}{{\mathfrak q}}    \newcommand{\frkQ}{{\mathfrak Q}}
\newcommand{\frkr}{{\mathfrak r}}    \newcommand{\frkR}{{\mathfrak R}}
\newcommand{\frks}{{\mathfrak s}}    \newcommand{\frkS}{{\mathfrak S}}
\newcommand{\frkt}{{\mathfrak t}}    \newcommand{\frkT}{{\mathfrak T}}
\newcommand{\frku}{{\mathfrak u}}    \newcommand{\frkU}{{\mathfrak U}}
\newcommand{\frkv}{{\mathfrak v}}    \newcommand{\frkV}{{\mathfrak V}}
\newcommand{\frkw}{{\mathfrak w}}    \newcommand{\frkW}{{\mathfrak W}}
\newcommand{\frkx}{{\mathfrak x}}    \newcommand{\frkX}{{\mathfrak X}}
\newcommand{\frky}{{\mathfrak y}}    \newcommand{\frkY}{{\mathfrak Y}}
\newcommand{\frkz}{{\mathfrak z}}    \newcommand{\frkZ}{{\mathfrak Z}}


\newcommand{\bfa}{{\mathbf a}}    \newcommand{\bfA}{{\mathbf A}}
\newcommand{\bfb}{{\mathbf b}}    \newcommand{\bfB}{{\mathbf B}}
\newcommand{\bfc}{{\mathbf c}}    \newcommand{\bfC}{{\mathbf C}}
\newcommand{\bfd}{{\mathbf d}}    \newcommand{\bfD}{{\mathbf D}}
\newcommand{\bfe}{{\mathbf e}}    \newcommand{\bfE}{{\mathbf E}}
\newcommand{\bff}{{\mathbf f}}    \newcommand{\bfF}{{\mathbf F}}
\newcommand{\bfg}{{\mathbf g}}    \newcommand{\bfG}{{\mathbf G}}
\newcommand{\bfh}{{\mathbf h}}    \newcommand{\bfH}{{\mathbf H}}
\newcommand{\bfi}{{\mathbf i}}    \newcommand{\bfI}{{\mathbf I}}
\newcommand{\bfj}{{\mathbf j}}    \newcommand{\bfJ}{{\mathbf J}}
\newcommand{\bfk}{{\mathbf k}}    \newcommand{\bfK}{{\mathbf K}}
\newcommand{\bfl}{{\mathbf l}}    \newcommand{\bfL}{{\mathbf L}}
\newcommand{\bfm}{{\mathbf m}}    \newcommand{\bfM}{{\mathbf M}}
\newcommand{\bfn}{{\mathbf n}}    \newcommand{\bfN}{{\mathbf N}}
\newcommand{\bfo}{{\mathbf o}}    \newcommand{\bfO}{{\mathbf O}}
\newcommand{\bfp}{{\mathbf p}}    \newcommand{\bfP}{{\mathbf P}}
\newcommand{\bfq}{{\mathbf q}}    \newcommand{\bfQ}{{\mathbf Q}}
\newcommand{\bfr}{{\mathbf r}}    \newcommand{\bfR}{{\mathbf R}}
\newcommand{\bfs}{{\mathbf s}}    \newcommand{\bfS}{{\mathbf S}}
\newcommand{\bft}{{\mathbf t}}    \newcommand{\bfT}{{\mathbf T}}
\newcommand{\bfu}{{\mathbf u}}    \newcommand{\bfU}{{\mathbf U}}
\newcommand{\bfv}{{\mathbf v}}    \newcommand{\bfV}{{\mathbf V}}
\newcommand{\bfw}{{\mathbf w}}    \newcommand{\bfW}{{\mathbf W}}
\newcommand{\bfx}{{\mathbf x}}    \newcommand{\bfX}{{\mathbf X}}
\newcommand{\bfy}{{\mathbf y}}    \newcommand{\bfY}{{\mathbf Y}}
\newcommand{\bfz}{{\mathbf z}}    \newcommand{\bfZ}{{\mathbf Z}}


\newcommand{\cala}{{\mathcal A}}
\newcommand{\calb}{{\mathcal B}}
\newcommand{\calc}{{\mathcal C}}
\newcommand{\cald}{{\mathcal D}}
\newcommand{\cale}{{\mathcal E}}
\newcommand{\calf}{{\mathcal F}}
\newcommand{\calg}{{\mathcal G}}
\newcommand{\calh}{{\mathcal H}}
\newcommand{\cali}{{\mathcal I}}
\newcommand{\calj}{{\mathcal J}}
\newcommand{\calk}{{\mathcal K}}
\newcommand{\call}{{\mathcal L}}
\newcommand{\calm}{{\mathcal M}}
\newcommand{\caln}{{\mathcal N}}
\newcommand{\calo}{{\mathcal O}}
\newcommand{\calp}{{\mathcal P}}
\newcommand{\calq}{{\mathcal Q}}
\newcommand{\calr}{{\mathcal R}}
\newcommand{\cals}{{\mathcal S}}
\newcommand{\calt}{{\mathcal T}}
\newcommand{\calu}{{\mathcal U}}
\newcommand{\calv}{{\mathcal V}}
\newcommand{\calw}{{\mathcal W}}
\newcommand{\calx}{{\mathcal X}}
\newcommand{\caly}{{\mathcal Y}}
\newcommand{\calz}{{\mathcal Z}}


\newcommand{\scra}{{\mathscr A}}
\newcommand{\scrb}{{\mathscr B}}
\newcommand{\scrc}{{\mathscr C}}
\newcommand{\scrd}{{\mathscr D}}
\newcommand{\scre}{{\mathscr E}}
\newcommand{\scrf}{{\mathscr F}}
\newcommand{\scrg}{{\mathscr G}}
\newcommand{\scrh}{{\mathscr H}}
\newcommand{\scri}{{\mathscr I}}
\newcommand{\scrj}{{\mathscr J}}
\newcommand{\scrk}{{\mathscr K}}
\newcommand{\scrl}{{\mathscr L}}
\newcommand{\scrm}{{\mathscr M}}
\newcommand{\scrn}{{\mathscr N}}
\newcommand{\scro}{{\mathscr O}}
\newcommand{\scrp}{{\mathscr P}}
\newcommand{\scrq}{{\mathscr Q}}
\newcommand{\scrr}{{\mathscr R}}
\newcommand{\scrs}{{\mathscr S}}
\newcommand{\scrt}{{\mathscr T}}
\newcommand{\scru}{{\mathscr U}}
\newcommand{\scrv}{{\mathscr V}}
\newcommand{\scrw}{{\mathscr W}}
\newcommand{\scrx}{{\mathscr X}}
\newcommand{\scry}{{\mathscr Y}}
\newcommand{\scrz}{{\mathscr Z}}


\newcommand{\AAA}{{\mathbb A}} 
\newcommand{\BB}{{\mathbb B}}
\newcommand{\CC}{{\mathbb C}}
\newcommand{\DD}{{\mathbb D}}
\newcommand{\EE}{{\mathbb E}}
\newcommand{\FF}{{\mathbb F}}
\newcommand{\GG}{{\mathbb G}}
\newcommand{\HH}{{\mathbb H}}
\newcommand{\II}{{\mathbb I}}
\newcommand{\JJ}{{\mathbb J}}
\newcommand{\KK}{{\mathbb K}}
\newcommand{\LL}{{\mathbb L}}
\newcommand{\MM}{{\mathbb M}}
\newcommand{\NN}{{\mathbb N}}
\newcommand{\OO}{{\mathbb O}}
\newcommand{\PP}{{\mathbb P}}
\newcommand{\QQ}{{\mathbb Q}}
\newcommand{\RR}{{\mathbb R}}
\newcommand{\SSS}{{\mathbb S}} 
\newcommand{\TT}{{\mathbb T}}
\newcommand{\UU}{{\mathbb U}}
\newcommand{\VV}{{\mathbb V}}
\newcommand{\WW}{{\mathbb W}}
\newcommand{\XX}{{\mathbb X}}
\newcommand{\YY}{{\mathbb Y}}
\newcommand{\ZZ}{{\mathbb Z}}


\newcommand{\tta}{\hbox{\tt a}}    \newcommand{\ttA}{\hbox{\tt A}}
\newcommand{\ttb}{\hbox{\tt b}}    \newcommand{\ttB}{\hbox{\tt B}}
\newcommand{\ttc}{\hbox{\tt c}}    \newcommand{\ttC}{\hbox{\tt C}}
\newcommand{\ttd}{\hbox{\tt d}}    \newcommand{\ttD}{\hbox{\tt D}}
\newcommand{\tte}{\hbox{\tt e}}    \newcommand{\ttE}{\hbox{\tt E}}
\newcommand{\ttf}{\hbox{\tt f}}    \newcommand{\ttF}{\hbox{\tt F}}
\newcommand{\ttg}{\hbox{\tt g}}    \newcommand{\ttG}{\hbox{\tt G}}
\newcommand{\tth}{\hbox{\tt h}}    \newcommand{\ttH}{\hbox{\tt H}}
\newcommand{\tti}{\hbox{\tt i}}    \newcommand{\ttI}{\hbox{\tt I}}
\newcommand{\ttj}{\hbox{\tt j}}    \newcommand{\ttJ}{\hbox{\tt J}}
\newcommand{\ttk}{\hbox{\tt k}}    \newcommand{\ttK}{\hbox{\tt K}}
\newcommand{\ttl}{\hbox{\tt l}}    \newcommand{\ttL}{\hbox{\tt L}}
\newcommand{\ttm}{\hbox{\tt m}}    \newcommand{\ttM}{\hbox{\tt M}}
\newcommand{\ttn}{\hbox{\tt n}}    \newcommand{\ttN}{\hbox{\tt N}}
\newcommand{\tto}{\hbox{\tt o}}    \newcommand{\ttO}{\hbox{\tt O}}
\newcommand{\ttp}{\hbox{\tt p}}    \newcommand{\ttP}{\hbox{\tt P}}
\newcommand{\ttq}{\hbox{\tt q}}    \newcommand{\ttQ}{\hbox{\tt Q}}
\newcommand{\ttr}{\hbox{\tt r}}    \newcommand{\ttR}{\hbox{\tt R}}
\newcommand{\tts}{\hbox{\tt s}}    \newcommand{\ttS}{\hbox{\tt S}}
\newcommand{\ttt}{\hbox{\tt t}}    \newcommand{\ttT}{\hbox{\tt T}}
\newcommand{\ttu}{\hbox{\tt u}}    \newcommand{\ttU}{\hbox{\tt U}}
\newcommand{\ttv}{\hbox{\tt v}}    \newcommand{\ttV}{\hbox{\tt V}}
\newcommand{\ttw}{\hbox{\tt w}}    \newcommand{\ttW}{\hbox{\tt W}}
\newcommand{\ttx}{\hbox{\tt x}}    \newcommand{\ttX}{\hbox{\tt X}}
\newcommand{\tty}{\hbox{\tt y}}    \newcommand{\ttY}{\hbox{\tt Y}}
\newcommand{\ttz}{\hbox{\tt z}}    \newcommand{\ttZ}{\hbox{\tt Z}}

\newcommand{\phm}{\phantom}
\newcommand{\ds}{\displaystyle }
\newcommand{\smallstrut}{\vphantom{\vrule height 3pt }}
\def\bdm #1#2#3#4{\left(
\begin{array} {c|c}{\ds{#1}}
 & {\ds{#2}} \\ \hline
{\ds{#3}\vphantom{\ds{#3}^1}} &  {\ds{#4}}
\end{array}
\right)}
\newcommand{\wtd}{\widetilde }
\newcommand{\bsl}{\backslash }
\newcommand{\GL}{{\mathrm{GL}}}
\newcommand{\SL}{{\mathrm{SL}}}
\newcommand{\GSp}{{\mathrm{GSp}}}
\newcommand{\PGSp}{{\mathrm{PGSp}}}
\newcommand{\SP}{{\mathrm{Sp}}}
\newcommand{\SO}{{\mathrm{SO}}}
\newcommand{\SU}{{\mathrm{SU}}}
\newcommand{\Ind}{\mathrm{Ind}}
\newcommand{\Hom}{{\mathrm{Hom}}}
\newcommand{\Ad}{{\mathrm{Ad}}}
\newcommand{\Sym}{{\mathrm{Sym}}}
\newcommand{\Mat}{\mathrm{M}}
\newcommand{\sgn}{\mathrm{sgn}}
\newcommand{\trs}{\,^t\!}
\newcommand{\iu}{\sqrt{-1}}
\newcommand{\oo}{\hbox{\bf 0}}
\newcommand{\ono}{\hbox{\bf 1}}
\newcommand{\smallcirc}{\lower .3em \hbox{\rm\char'27}\!}
\newcommand{\bAf}{\bA_{\hbox{\eightrm f}}}
\newcommand{\thalf}{{\textstyle{\frac12}}}
\newcommand{\shp}{\hbox{\rm\char'43}}
\newcommand{\Gal}{\operatorname{Gal}}

\newcommand{\bdel}{{\boldsymbol{\delta}}}
\newcommand{\bchi}{{\boldsymbol{\chi}}}
\newcommand{\bgam}{{\boldsymbol{\gamma}}}
\newcommand{\bome}{{\boldsymbol{\omega}}}
\newcommand{\bpsi}{{\boldsymbol{\psi}}}
\newcommand{\GK}{\mathrm{GK}}
\newcommand{\EGK}{\mathrm{EGK}}
\newcommand{\ord}{\mathrm{ord}}
\newcommand{\diag}{\mathrm{diag}}
\newcommand{\ua}{{\underline{a}}}
\newcommand{\ZZn}{\ZZ_{\geq 0}^n}

\theoremstyle{plain}
\newtheorem{theorem}{Theorem}[section]
\newtheorem{lemma}{Lemma}[section]
\newtheorem{proposition}{Proposition}[section]
\newtheorem{corollary}{\bf {Corollary}}[section]
\theoremstyle{definition}
\newtheorem{definition}{Definition}[section]
\newtheorem{conjecture}{Conjecture}[section]
\newtheorem{remark}{{\bf Remark}}[section]
\theoremstyle{remark}
\newtheorem{formula}{\bf {Formula}}[section]

%

\def\mattwono(#1;#2;#3;#4){\begin{array}{cc}
                               #1  & #2 \\
                               #3  & #4
                                      \end{array}}

\def\mattwo(#1;#2;#3;#4){\left(\begin{matrix}
                               #1 & #2 \\
                               #3  & #4
                                      \end{matrix}\right)}
 \def\smallmattwo(#1;#2;#3;#4){\left(\begin{smallmatrix}
                               #1 & #2 \\
                               #3  & #4
                                      \end{smallmatrix}\right)}                                     
                                      
 \def\matthree(#1;#2;#3;#4;#5;#6;#7;#8;#9){\left(\begin{matrix}
                               #1 & #2  & #3\\
                               #4  & #5 & #6\\
                               #7  & #8 &#9 
                                      \end{matrix}\right)}                                     
                                      
\def\mattwo(#1;#2;#3;#4){\left(\begin{matrix}
                               #1 & #2 \\
                               #3  & #4
                                      \end{matrix}\right)}  

\def\rowthree(#1;#2;#3){\begin{matrix}
                               #1   \\
                               #2  \\
                               #3
                                      \end{matrix}}  
\def\columnthree(#1;#2;#3){\begin{matrix}
                               #1   &   #2  &  #3
                                      \end{matrix}}  
                                      
\def\rowfive(#1;#2;#3;#4;#5){\begin{array}{lllll}
                               #1   \\
                               #2  \\
                               #3 \\
                               #4 \\
                               #5                              
                                      \end{array}} 

\def\columnfive(#1;#2;#3;#4;#5){\begin{array}{lllll}
                               #1   &   #2  &  #3 & #4 & #5
                                \end{array}}

\def\mattwothree(#1;#2;#3;#4;#5;#6){\begin{matrix}
                               #1 & #2  & #3  \\
                               #4 & #5  & #6
                                      \end{matrix}}  
\def\matthreetwo(#1;#2;#3;#4;#5;#6){\begin{array}{lc}
                               #1  & #2  \\
                               #3  & #4 \\
                               #5  & #6
                                      \end{array}}  
\def\columnthree(#1;#2;#3){\begin{matrix}
                               #1 & #2 & #3  
                                  \end{matrix}}  
\def\rowthree(#1;#2;#3){\begin{matrix}
                               #1 \\
                                #2 \\
                                #3  
                                  \end{matrix}}  
\def\smallddots{\mathinner
{\mskip1mu\raise3pt\vbox{\kern7pt\hbox{.}}
\mskip1mu\raise0pt\hbox{.}
\mskip1mu\raise-3pt\hbox{.}\mskip1mu}}

\begin{abstract}
Let $F$ be a non-archimedean local field of characteristic $0$, and $\frko$ the ring of integers in $F$. 
We give an explicit formula for the Siegel series of a half-integral  matrix over $\frko$.
This formula expresses the Siegel series of a half-integral matrix $B$  explicitly in terms of the Gross-Keating invariant of $B$ and its related invariants.
\end{abstract}

\section{Introduction}
The Siegel series  is one of the simplest but most important subjects in number theory, and is related with various types of arithmetic theories  of modular forms. It appears in the Fourier coefficients of the Hilbert-Siegel Eisenstein series, and it is also related with the Fourier coefficients of the lift, called the Duke-Imamoglu-Ikeda lift, constructed by the first named author in \cite{Ike1} (see also  \cite{Ike-Ya}). It also plays very important roles in the study of various types of L-functions associated with cusp forms through the pullback formula(cf.  \cite{B}, \cite{B-S},  \cite{Ko}, \cite{Or}, \cite{Sh3}).  Moreover, it is closely related to arithmetic algebraic geometry (cf. \cite{Ku1}, \cite{Ku2}, \cite{K-R-Y}). In all cases, precise information on the Siegel series is necessary. Thus it is very important to give an explicit form of the Siegel series. In \cite{Kat1}, the second named author gave an explicit formula for the Siegel series of a half-integral matrix over $\ZZ_p$ with any prime number $p$ of any degree. The formula  is useful for a practical computation of the Siegel series, and has several interesting applications. Indeed, the formula was used to give   special values of the standard L-functions of Siegel modular forms  and the triple product L-functions of elliptic modular forms exactly  (cf. \cite{D-I-K},\cite{I-K-P-Y},\cite{Ib-Kat}, \cite{Kat2},\cite{Kat3},\cite{Kat-Mi}).  These computations played important roles not only in  confirming several conjectures on such values numerically but also in proposing new conjecture on them.  The formula was also one of key ingredients in proving the conjecture on the period of  the Duke-Imamoglu-Ikeda lift proposed in 
\cite{Ike2} (cf. \cite{Kat-Kaw2}). Moreover, it  was used to relate the local intersection multiplicities on certain Shimura varieties to the derivatives of certain local Whittaker functions in \cite {Ra-Wed}. 

 However, it is not satisfactory in the following reasons. Firstly, the formula is complicated in the case $p=2$, and it seems difficult to unify it with the formula in the case that $p$ is odd as it is. Secondly, it does not seem clear what invariants determine the Siegel series.
Though there are other explicit formulas for local densities (cf. \cite{Hi-Sa},  \cite{Ya}), it seems difficult 
to resolve the above problems using them. 
In \cite{Wed}, Wedhorn reformulated the formula in \cite{Kat1} for the Siegel series of a half-integral matrix of degree three  in terms of  the Gross-Keating invariant in \cite{G-K}. In \cite{Ot}, Otsuka gave an explicit formula for the Siegel series of a half-integral matrix of degree two  over the ring of integers of any non-archimedean local field of characteristic $0$.
In this paper, we give an explicit formula of the Siegel series of a half-integral matrix of any degree over any non-archimedean local field of characteristic $0$.

We explain our main result more precisely. Let $F$ be a  non-archimedean local field of characteristic $0$ with the residue field $\frkk$ and  let $\frko$ be the ring of integers in $F$. Put $q=\#(\frkk)$.  For a non-degenerate half-integral  matrix $B$ of degree $n$ over $\frko$, let $b(B,s)$ be the Siegel series of $B$. Then, as will be explained in Section 2, we obtain a polynomial $\widetilde F(B,X)$ in $X^{1/2}$ and $X^{-1/2}$ attached to $b(B,s)$. Let $\GK(B)$ be the Gross-Keating invariant of $B$. 
We then define a set $\EGK(B)$ of invariants of $B$ (cf. Definition \ref{def.3.5}), which will be called the extended GK datum of $B$. In the non-dyadic case, $B$ has a diagonal Jordan decomposition:
\[B \sim \vpi^{m_1} U_1 \bot \cdots \bot \vpi^{m_r}U_r\]
with $m_1,\ldots,m_r$ non-negative integers such that $m_1<\cdots<m_r$ and $U_i$ a diagonal unimodular matrix  of degree $n_i$ for $i=1,\ldots,r$. Then 
\[\GK(B)=(\underbrace{m_1,\ldots,m_1}_{n_1},\ldots,\underbrace{m_r,\ldots,m_r}_{n_r}).\]
For each $i=1,\ldots,r$, we define $\zeta_i$ as 
\[\zeta_i=\begin{cases}
\xi_{B^{(n_1+\cdots+n_i)}} & \text{ if } \deg B^{(n_1+\cdots+n_i)} \text{ is even}\\
\eta_{B^{(n_1+\cdots+n_i)}} & \text{ if } \deg B^{(n_1+\cdots+n_i)} \text{ is odd},
\end{cases}\]
where $B^{(k)}$ is the upper left $k \times k$ block of $B$, and $\xi_A$ and $\eta_A$ are the invariants of a half-integral matrix $A$, which will be defined in Section 2.
Then $\EGK(B)$ is defined as $(n_1,\ldots,n_r;m_1,\ldots,m_r;\zeta_1,\ldots,\zeta_r)$. In the dyadic case, the invariants  $\GK(B)$ and 
$\EGK(B)$ of $B$ are more elaborately defined.
Then we express $\widetilde F(B,X)$  explicitly in terms of $\EGK(B)$.
This polynomial is universal in the following sense. We define an $\EGK$ datum $G$ of length $n$ as an element $(n_1,\ldots,n_r;m_1,\ldots,m_r;\zeta_1,\ldots,\zeta_r)$ of
$\ZZ^r_{>0} \times \ZZ^r_{\ge 0} \times \{0,1,-1 \}^r$  with $n_1+\cdots +n_r=n$ satisfying certain conditions (cf. Definition \ref{def.4.5}). 
The $\EGK$ datum is defined by axiomatizing some properties of the  extended GK datum of a half-integral matrix, and naturally 
$\EGK(B)$ is an EGK datum (cf. Theorem \ref{th.4.2}). We define a Laurent polynomial 
$\widetilde \calf(G;Y,X)$ in  $X^{1/2},Y$ attached to $G$.  An explicit formula for $\widetilde \calf(G;Y,X)$ will be given in Proposition \ref{prop.4.1}. Then, our main result in this paper is as follows:

\begin{theorem}
\label{th.1.1} Let $B$ be a non-degenerate half-integral matrix of degree $n$ over $\frko$. Then we have 
\[\widetilde F(B,X)=\widetilde \calf(\EGK(B);q^{1/2},X).\]
\end{theorem}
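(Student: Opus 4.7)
My proof plan is by induction on $r$, the length of the EGK datum (equivalently, the number of distinct levels in the Gross-Keating decomposition of $B$), exploiting the recursive structure of $\EGK(B)$ and the explicit formula for $\widetilde \calf(G; Y, X)$ from Proposition \ref{prop.4.1}.

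For the base case $r = 1$, the matrix $B$ is equivalent to $\vpi^{m_1}$ times a unit form of rank $n_1$, and the Siegel series admits a classical closed-form expression depending only on $n_1$, $m_1$, and the Witt-type invariant $\zeta_1$. A direct calculation then verifies that $\widetilde F(B, X) = \widetilde \calf(G; q^{1/2}, X)$ for the single-block datum $G = (n_1; m_1; \zeta_1)$, using the explicit evaluation of $\widetilde \calf$ at $r=1$.

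For the inductive step, I would establish a reduction formula that peels off a block of the Gross-Keating decomposition, expressing $\widetilde F(B, X)$ in terms of $\widetilde F(B', X)$ for a matrix $B'$ of smaller degree and smaller length. Such a reduction can be obtained from the Katsurada-style recursion for Siegel series (in the spirit of \cite{Kat1}), derived from row/column manipulations of $B$ together with the functional equation satisfied by $\widetilde F(B,X)$. Simultaneously, from the explicit formula for $\widetilde \calf$ in Proposition \ref{prop.4.1} one reads off a parallel recursion on EGK data, and verifies that the EGK datum of the reduced matrix $B'$ is the corresponding truncation of $\EGK(B)$. Matching the coefficients in both recursions and invoking the inductive hypothesis then closes the argument.

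The principal obstacle is the dyadic case $2 \in \frkp$. There the Jordan decomposition of $B$ is no longer diagonal, binary blocks appear, and the invariants $\GK(B)$, $\EGK(B)$, $\xi$, and $\eta$ are defined through a more elaborate procedure. The careful bookkeeping required to match the reduction formula for $\widetilde F(B, X)$ with the recursion for $\widetilde \calf$ on the EGK side -- in particular, tracking how the $\zeta_i$ and the parity of $\deg B^{(n_1+\cdots+n_i)}$ transform when a block is removed, and handling boundary cases where binary blocks straddle levels -- is where the bulk of the technical work lies. The non-dyadic case, by contrast, reduces largely to bookkeeping once the diagonal Jordan decomposition is invoked and the multiplicativity of classical Siegel series over orthogonal sums is applied.
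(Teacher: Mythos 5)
Your plan contains a genuine gap, and in two places it relies on claims that are false or unavailable. First, the ``multiplicativity of classical Siegel series over orthogonal sums'' that you invoke to dispose of the non-dyadic case does not hold: for $B=B_1\bot B_2$ one does not have $b_{\frkp}(B,s)=b_{\frkp}(B_1,s)\,b_{\frkp}(B_2,s)$, and no bookkeeping argument can be based on it. What actually drives the non-dyadic case is an induction formula proved via local densities (primitive decompositions in the sense of Kitaoka, Theorem \ref{th.5.1} and Corollaries \ref{cor.5.1}, \ref{cor.5.3}) combined with the functional equation of Proposition \ref{prop.2.1}; it expresses $\widetilde F(B,X)$ in terms of $\widetilde F(B^{(n-1)},q^{1/2}X^{\pm 1})$ with the coefficients $C$ and $D$ of Definition \ref{def.4.2} (Theorem \ref{th.6.1}), i.e.\ it peels off a single row and column, not an entire Jordan block.

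Second, your induction on $r$ presupposes a reduction formula that removes a whole block $\vpi^{m_r}U_r$ of size $n_r$ in one step. No such formula is available from the Katsurada-style recursions you cite, and it does not match the recursive structure of the right-hand side: by Definition \ref{def.4.4} and Proposition \ref{prop.4.1}, $\calf(H;Y,X)$ is built by removing one entry $(a_n,\vep_n)$ of a naive EGK datum at a time (or two entries in the exceptional cases of Proposition \ref{prop.4.4}), so the induction that can be matched against it is an induction on the degree $n$, which is what the paper carries out. Your base case is also unjustified in the dyadic case: a form whose EGK datum has $r=1$ need not be $\vpi^{m_1}$ times a diagonal unit form there, and the assertion that its Siegel series depends only on $(n_1,m_1,\zeta_1)$ is an instance of the very statement being proved, so it cannot be assumed. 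Finally, the dyadic ``bookkeeping'' you defer is precisely where the substantive input lies: one needs reduced and strongly reduced forms, the stability of the extended GK datum under perturbations in $\calm^0(\ua)$ (Theorem \ref{th.3.3}), which guarantees $\EGK(B_x)=\EGK(B^{(n-1)})$ for the matrices $B_x$ occurring in the density recursion, and the degree-two descent of Theorem \ref{th.5.4} together with Lemmas \ref{lem.7.2}--\ref{lem.7.5}; without these ingredients the proposed matching of the two recursions cannot be carried out.
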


This unifies the formula for $p=2$ with that for an odd prime $p$ in  \cite{Kat1}. Therefore, our result not only gives a generalization of  the main result in \cite{Kat1} but also reformulates it in a satisfactory way, and  is new even in the case $F=\QQ_p$. Our result also shows that
$\widetilde F(B,X)$ is  determined by $\EGK(B)$.
Based on this fact, Cho and Yamauchi \cite{C-Y} give an induction formula for the Siegel series, which is different from that
in the present paper. It gives a description of the local intersection multiplicities of the special cycles on the special fiber of Shimura varieties for $SO(2,n)$ with $n \le 3$ over a finite field in terms of Siegel series directly, which sheds a new light on Kudla's program \cite{Ku2}. Therefore, our result   plays a crucial role also  in arithmetic algebraic geometry.

A proof of Theorem \ref{th.1.1} will be given in Sections 6 and 7. To explain the method of the proof of our main result, first we review the proof of the main result in \cite{Kat1} with some modification.  
For simplicity let $q$ be odd, and let $B$ be a non-degenerate half-integral  matrix of degree $n$ over the ring $\frko$.
Then $B$ has the following diagonal Jordan decomposition 
\[B \sim \vpi^{a_1}u_1 \bot \cdots \bot \vpi^{a_{n}}u_n\]
with $a_1 \le \cdots \le a_n$ and $u_1,\ldots, u_n \in \frko^{\times}$. 
Then, in the case that  $\frko$ is the ring $\ZZ_p$ of $p$-adic integers, by the induction formulas for local densities (cf. Theorem \ref{th.5.1}), and the functional equation of the Siegel series (cf. Proposition \ref{prop.2.1}), we can
express $\widetilde F(B,X)$ in terms of $\widetilde F(B^{(n-1)},X)$ (cf. [\cite{Kat1}, Theorem 4.1)]).
This argument works for any Siegel series over a non-dyadic field. To be more precise, for an integer $1 \le i \le n$, we define $\frke_i$ as
\[\frke_i=
\begin{cases} a_1+\cdots +a_i  & \text{ if  $i$ is odd} \\
2[(a_1+\cdots+a_i)/2] & \text{ if $i$ is even.}
\end{cases}\]
Then we have the following  {\rm (cf. Theorem \ref{th.6.1})} :\\
{\it 
Under the above notation and the assumption, we have
\begin{align*}
\widetilde F(B,X)&=D(\frke_n,\frke_{n-1};\xi_{B^{(n-1)}};q^{1/2},X)\widetilde F(B^{(n-1)},q^{1/2}X) \\
&+\eta_B D(\frke_n, \frke_{n-1},\xi_{B^{(n-1)}};q^{1/2},X^{-1})\widetilde F(B^{(n-1)},q^{1/2}X^{-1})
\end{align*}
if $n$ is odd, and 
\begin{align*}
\widetilde F(B,X)&=C(\frke_n,\frke_{n-1},\xi_B;q^{1/2},X)\widetilde F(B^{(n-1)},q^{1/2}X) \\
&+ C(\frke_n, \frke_{n-1},\xi_B;q^{1/2},X^{-1})\widetilde F(B^{(n-1)},q^{1/2}X^{-1})
\end{align*}
if $n$ is even. In particular if $n=1$ we have
\[\widetilde F(B,X)=\sum_{i=0}^{r_1} X^{i-(r_1/2)},\]
where $\xi_*$ and $\eta_*$ are the invariants stated above, and $C(*,*,*;Y,X)$ and $D(*,*,*;Y,X)$ are rational
 functions in $X^{1/2}$ and $Y^{1/2}$, which will be defined in Definition \ref{def.4.3}.}

Using the induction formulas stated above repeatedly, we get an explicit formula for $\widetilde F(B,X)$. However, in the case $p=2$, we do not necessarily  have a diagonal Jordan decomposition for $B$,  and therefore, the formula for $p=2$ becomes complicated, and it is no hope to generalize it to any dyadic field as it is. 
To overcome this obstacle, we adopt a reduced  decomposition of $B$ (cf. Definition \ref{def.3.5}) instead of a diagonal Jordan decomposition. Let $B$ be a reduced  form of degree $n$. Then, in the case that   $B^{(n-1)}$ is  a reduced  form, and we can express 
$\widetilde F(B,X)$ in terms of $\widetilde F(B^{(n-1)},X)$ as (\ref{eq.1.1}) and (\ref{eq.1.2}) in the proof of Theorem 1.1 in the dyadic case. In the other cases, we can also express $\widetilde F(B,X)$ in terms of  $\widetilde F(B^{(n-2)},X)$ as  (\ref{eq.2.3}) and  (\ref{eq.3.1}) therein.
From the induction formulas we prove the explicit formula stated above. A key ingredient for proving such induction formulas is
a stability of the extended GK datum of a reduced  form (cf. Theorem \ref{th.3.3}), which was essentially proved in \cite{Ike-Kat}. 
Therefore the extended GK datum plays a very important roll not only in formulating main results but also in proving them. 

It seems interesting to consider a Hermitian version of the main result in this paper. 

We would like to thank Takuya Yamauchi and Sungmun Cho for many fruitful discussions and suggestions.  
The research was partially supported by the JSPS KAKENHI Grant Number  26610005, 24540005, 16H03919 and 17H02834.
 This work was supported by the Research Institute for Mathematical Sciences, an International Joint Usage/Research Center located in Kyoto University.

{\bf Notation} Let $R$ be a commutative ring. We denote by $R^{\times}$ the group of units in $R$. We denote by $M_{mn}(R)$ the set of $(m,n)$ matrices with entries in $R$, and especially write $M_n(R)=M_{nn}(R)$. 
We often identify an element $a$ of $R$ and the matrix $(a)$ of degree 1 whose component is $a$. If $m$ or $n$ is 0, we understand an element of $M_{mn}(R)$ is the {\it empty matrix} and denote it by $\emptyset$. Let $GL_n(R)$ be the group consisting of all invertible elements of $M_n(R)$, and ${\rm Sym}_n(R)$ the set of symmetric matrices of degree $n$ with entries in $R$.  For a  semigroup  $S$ we put $S^{\Box}=\{s^2 \ | \ s \in S \}$.
Let $R$ be an integral domain  of characteristic different from $2$, and $K$ its quotient field. We say that an element  $A $ of $\mathrm{Sym}_n(K)$ is non-degenerate if the determinant  $\det A$ of $A$ is non-zero. For a subset $S$ of $\mathrm{Sym}_n(K)$, we denote by
$S^{{\rm{nd}}}$ the subset of $S$ consisting of non-degenerate matrices.  We say that a symmetric matrix $A=(a_{ij})$ of degree $n$ with entries in $K$ is half-integral if $a_{ii} \ (i=1,...,n)$ and $2a_{ij} \ (1 \le i \not= j \le n)$ belong to $R$. We denote by $\calh_n(R)$ the set of half-integral matrices of degree $n$ over $R$. 
We note that $\calh_n(R)={\rm Sym}_n(R)$ if $R$ contains the inverse of $2$. 
We denote by $\ZZ_{> 0}$ and $\ZZ_{\ge 0}$ the set of positive integers and the set of non-negative integers, respectively.  
 For an $(m,n)$ matrix $X$ and an $(m,m)$ matrix $A$, we write $A[X] ={}^tXAX$, where $^t X$ denotes the transpose of $X$.
Let $G$ be a subgroup of $GL_n(K)$. Then we say that two elements $B$ and $B'$ in $\mathrm{Sym}_n(K)$  are $G$-equivalent if there is an element $g$ of $G$ such that $B'=B[g]$.  
For two square matrices $X$ and $Y$ we write $X \bot Y =\mattwo(X;O;O;Y)$. We often write $x \bot Y$ instead of $(x) \bot Y$ if $(x)$ is  a matrix of degree 1. 
For a square matrix $B$ of degree $n$ and integers $1 \le i_1, \ldots, i_r \le n, 1 \le j_1 , \ldots,  j_r \le n$ such that 
$i_k \not=i_l \ (k \not=l)$ and $j_{k'} \not= j_{l'} \ (k' \not= l')$ we denote by
$B(i_1,\ldots,i_r;j_1,\ldots,j_r)$ the matrix obtained from $B$ by deleting its $i_1,\ldots,i_r$-th rows and $j_1,\ldots,j_r$-th columns.
In particular, put  $T^{(k)}=T(k+1,\ldots,n;k+1,\ldots, n)$. We make the convention that $T^{(k)}$ is the empty matrix if $k=0$.  
We denote by $1_m$ the unit matrix of degree $m$ and by $O_{m,n}$ the zero matrix of type $(m,n)$. We sometimes abbreviate $O_{m,n}$ as $O$ if there is no fear of  confusion.
\section{Siegel series}
Let $F$ be a non-archimedean local field of characteristic $0$, and $\frko=\frko_F$ its ring of integers.
The maximal ideal and  the residue field of $\frko$ is denoted by $\frkp$ and $\frkk$, respectively.
We fix a prime element $\vpi$ of $\frko$ once and for all.
The cardinality of $\frkk$ is denoted by $q$.
Let $\ord=\ord_{\frkp}$ denote additive valuation on $F$  normalized so that $\ord(\vpi)=1$. If $a=0$,  We write $\ord(0)=\infty$
and we  make the convention that $\ord(0) > \ord(b)$ for any $b \in F^{\times}$.
We also denote by $|*|_{\frkp}$ denote the valuation on $F$ normalized so that $|\vpi|_{\frkp}=q^{-1}$. 
We put $e_0=\ord_{\frkp}(2)$.
 
For a non-degenerate element $B\in\calh_n(\frko)$, we put $D_B=(-4)^{[n/2]}\det B$.
If $n$ is even, we denote the discriminant ideal of $F(\sqrt{D_B})/F$ by $\frkD_B$.
We also put
\[
\xi_B=
\begin{cases} 
1 & \text{ if $D_B\in F^{\times 2}$,} \\
-1 & \text{ if $F(\sqrt{D_B})/F$ is unramified quadratic,} \\
0 & \text{ if $F(\sqrt{D_B})/F$ is ramified quadratic.} 
\end{cases}
\]
Put 
\[\frke_B=
\begin{cases}
\ord(D_B)-\ord(\frkD_B)   & \text{ if $n$ is even} \\
\ord(D_B)                         & \text{ if $n$ is odd.}
\end{cases}\]

Let $\langle  \ {} \ , \ {} \ \rangle=\langle \ {} \ , \ {} \ \rangle_F$ be the Hilbert symbol on  $F$. Let $B$ be  a non-degenerate symmetric matrix with entries in $F$ of degree $n$. Then $B$ is $GL_n(F)$-equivalent to $b_1 \bot \cdots \bot b_n$ with $b_1,\ldots,b_n \in F^{\times}$. Then we define $\varepsilon_B$ as  
\[\varepsilon_B=\prod_{1 \le i < j \le n} \langle b_i,b_j \rangle.\]
This does not depend on the choice of $b_1,\ldots,b_n$. We also denote by $\eta_B$ the Clifford invariant of $B$
(cf. \cite{Ike3}). 
Then we have 
\[\eta_B=
\begin{cases}
\langle -1,-1 \rangle^{m(m+1)/2} \langle (-1)^m,\det B \rangle \varepsilon_B & \ \text{if $n=2m+1$}\\
\langle -1,-1 \rangle^{m(m-1)/2} \langle (-1)^{m+1},\det B \rangle \varepsilon_B & \ \text{if $n=2m$.}
\end{cases}\]
(cf. [\cite{Ike3}, Lemma 2.1]). 
We make the convention that $\xi_B=1, \frke_B=0$ and $\eta_B=1$ if $B$ is the empty matrix.
Once for all, we fix an additive character $\psi$ of $F$ of order zero, that is, a character such that
\[\frko =\{ a \in F \ | \ \psi(ax)=1 \ \text{ for  any} \ x \in \frko \}.\]
  For  a half-integral matrix $B$ of degree $n$ over $\frko$ define the local Siegel series $b_{\frkp}(B,s)$ by 
\[b_{{\frkp}}(B,s)= \sum_{R} \psi({\rm tr}(BR))\mu(R)^{-s},\]
where $R$ runs over a complete set of representatives of ${\rm Sym}_n(F)/{\rm Sym}_n(\frko)$ and $\mu(R)=[R\frko^n+\frko^n:\frko^n]$. 

Now for a non-degenerate half-integral matrix $B$ of degree $n$ over $\frko $ define a polynomial $\gamma_q(B,X)$ in $X$ by 
\[\gamma_q(B,X)=
\begin{cases}
(1-X)\prod_{i=1}^{n/2}(1-q^{2i}X^2)(1-q^{n/2}\xi_B X)^{-1} & \text{ if $n$ is  even} \\ 
(1-X)\prod_{i=1}^{(n-1)/2}(1-q^{2i}X^2) & \text{ if $n$ is  odd.} \end{cases}\] 
Then it is shown by \cite{Sh1} that there exists a polynomial $F_{\frkp}(B,X)$ in $X$ with coefficients in $\ZZ$ such that 
\[F_{\frkp}(B,q^{-s})={b_{\frkp}(B,s) \over \gamma_q(B,q^{-s})}.\]
 We define a symbol $X^{1/2}$ so that $(X^{1/2})^2=X$.
We define $\widetilde F_{\frkp}(B,X)$ as
\[\widetilde F_{\frkp}(B,X)=X^{-\frke_B/2}F(B,q^{-(n+1)/2}X).\]
We note that $\widetilde F_{\frkp}(B,X) \in \QQ[q^{1/2}][X,X^{-1}]$ if $n$ is even, and
$\widetilde F_{\frkp}(B,X) \in \QQ[X^{1/2},X^{-1/2}]$ if $n$ is odd.
We sometimes write $F_{\frkp}(B,X)$ and $\widetilde F_{\frkp}(B,X)$ as $F(B,X)$ and $\widetilde F(B,X)$, respectively.

\bigskip
The following proposition is due to [\cite{Ike3}, Theorem 4.1]. 

\begin{proposition} 
\label{prop.2.1} 
We have 
\[\widetilde F(B,X^{-1})=\zeta_B\widetilde F(B,X),\]
where $\zeta_{B}=\eta_B$ or 1 according as $n$ is odd or even.
\end{proposition}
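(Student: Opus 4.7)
The plan is to realize $b_\frkp(B,s)$ as a Fourier coefficient of a spherical section of the Siegel degenerate principal series on $\SP_{2n}(F)$, and to derive the functional equation from the intertwining operator sending $I(s)$ to $I(-s)$. Let $P\subset \SP_{2n}$ be the Siegel parabolic and let
\[
(M(s)f)(g) = \int_{\Sym_n(F)} f\!\left(w_n \mattwo(1_n;R;0;1_n) g\right)\,dR, \qquad w_n=\mattwo(0;-1_n;1_n;0),
\]
be the standard unnormalized intertwining operator. Applying $M(s)$ to the spherical section $f_s^0$, evaluating at the identity, and unfolding against $\psi(\mathrm{tr}(BR))$ recovers $b_\frkp(B,s)$ up to the Euler factor $\gamma_q(B,q^{-s})$: this factor is essentially the contribution of the big-cell volume integral, which is why $F(B,q^{-s})=\gamma_q(B,q^{-s})^{-1}b_\frkp(B,s)$ is a polynomial.

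With this interpretation, the functional equation $s\mapsto -s$ for the normalized Siegel series comes from the identity $M(-s)M(s)=c(s)\,\mathrm{id}$ on $I(s)$, combined with the Gindikin--Karpelevich evaluation of $M(s)f_s^0$ on the spherical vector. After dividing by $\gamma_q(B,q^{-s})$, whose sole purpose is to cancel the local $L$-factors of the quadratic character attached to $\det B$ appearing on both sides, the remaining proportionality reduces to a local $\epsilon$-factor type constant times a power of $q$. The substitution $X=q^{-s}$ together with the twist $\widetilde F(B,X)=X^{-\frke_B/2}F(B,q^{-(n+1)/2}X)$ turns the natural symmetry $s\mapsto -s-(n+1)$ of the local zeta integral into the involution $X\mapsto X^{-1}$, while the $X^{-\frke_B/2}$ shift absorbs the power-of-$q$ part of the $\epsilon$-factor, leaving only a sign.

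The crux is identifying this sign with $\zeta_B$. For even $n$ the character $\chi_{D_B}$ associated with the extension $F(\sqrt{D_B})/F$ is quadratic, so $\epsilon(1/2,\chi_{D_B},\psi)^2=1$; combined with the self-duality of $I(s)$ through $\chi_{D_B}$ and the choice $\frke_B=\ord(D_B)-\ord(\frkD_B)$, the sign collapses to $1$. For odd $n$ the induced representation carries no nontrivial quadratic twist, and the constant must be computed directly: one diagonalizes $B\sim b_1\bot\cdots\bot b_n$ over $F$, writes the intertwining constant as a product of rank-one $\epsilon$-factors, and applies the Hilbert-symbol identity $\varepsilon_B=\prod_{i<j}\langle b_i,b_j\rangle$ together with the formula for $\eta_B$ recalled from \cite{Ike3} to recover exactly the Clifford invariant.

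The main obstacle is this final bookkeeping step: one must verify that the Weil index of $\psi$ (an eighth root of unity), the factor $(-4)^{[n/2]}$ in $D_B$, and the shift $(n+1)/2$ all combine cleanly, leaving no stray Hilbert symbols such as $\langle -1,-1\rangle$ or unwanted powers of $2$. The normalization in the definition of $\eta_B$ recalled just before the proposition is tailored precisely so that these spurious contributions cancel; once this cancellation is verified, the identity $\widetilde F(B,X^{-1})=\zeta_B\widetilde F(B,X)$ drops out.
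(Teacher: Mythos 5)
The paper itself does not prove this proposition: it is quoted verbatim from [\cite{Ike3}, Theorem 4.1], and your sketch is an outline of the same representation-theoretic circle of ideas used there (degenerate principal series, intertwining operators, Weil-index/epsilon-factor bookkeeping), so you are not on a wrong track. Judged as a proof, however, it has a genuine gap at each of its two decisive points. First, the functional equation of the $B$-th Fourier coefficient does not follow from $M(-s)M(s)=c(s)\,\mathrm{id}$ together with the Gindikin--Karpelevich evaluation: those identities only control the spherical vector itself. To transfer them to the coefficient $b_{\frkp}(B,s)$ you need either that the $\psi_B$-degenerate Whittaker functional on $I(s)$ is unique up to scalar (so that the functional pulled back through $M(s)$ is proportional to the one at $-s$, with a constant independent of the section), or an explicit evaluation of the $B$-th coefficient of $M(s)f_s^0$. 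That uniqueness/evaluation step is the substantive input (it is precisely what the cited work of Ikeda, building on Karel, Sweet and Kudla--Sweet, establishes), and your plan nowhere supplies it; likewise the rationality statement $F(B,q^{-s})=b_{\frkp}(B,s)/\gamma_q(B,q^{-s})$ is a theorem of Shimura, not a formal consequence of a ``big-cell volume'' factor.

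Second, the identification of the resulting constant with $\eta_B$ for odd $n$ and with $1$ for even $n$ is the entire content of the proposition, and you only assert it. For even $n$, the argument ``$\epsilon(1/2,\chi_{D_B},\psi)^2=1$, so the sign collapses'' does not engage the first power of the epsilon factor and the conductor that actually appear; handling the ramified case $\xi_B=0$ is exactly what the normalization $\frke_B=\ord(D_B)-\ord(\frkD_B)$ and the factor $X^{-\frke_B/2}$ are designed to absorb, and this must be checked, not presupposed. For odd $n$, the ``rank-one epsilon factor'' computation in which the Weil indices $\gamma_\psi(b_i)$ (eighth roots of unity), the factor $(-4)^{[n/2]}$ in $D_B$, and the $\langle -1,-1\rangle$ terms in the displayed formula for $\eta_B$ must all cancel is precisely the delicate bookkeeping you defer with ``once this cancellation is verified, the identity drops out.'' A priori the constant could contain a residual power of $q$ or a different Hasse-type invariant (e.g.\ $\varepsilon_B$ rather than the Clifford invariant $\eta_B$), and distinguishing these is the whole point of [\cite{Ike3}, Theorem 4.1]. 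As it stands, your proposal is a plausible strategy statement rather than a proof.
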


\section{The Gross-Keating invariant and  related invariants}
\label{sec:1}

We first recall the definition of the Gross-Keating invariant \cite{G-K} of a quadratic form over $\frko$.

For two matrices $B, B'\in\calh_n(\frko)$, we sometimes write $B\sim B'$ if $B$ and $B'$ are $GL_n(\frko)$-equivalent. 
The $GL_n(\frko)$-equivalence class of $B$ is denoted by $\{B\}$.
Let $B=(b_{ij}) \in \calh_n(\frko)^{\rm nd}$. 
Let $S(B)$ be the set of all non-decreasing sequences $(a_1, \ldots, a_n)\in\ZZn$ such that
\begin{align*}
\ord(b_i)&\geq a_i, \\
\ord(2 b_{ij})&\geq (a_i+a_j)/2\qquad (1\leq i,j\leq n).
\end{align*}
Set
\[
S(\{B\})=\bigcup_{B'\in\{B\}} S(B')=\bigcup_{U\in\GL_n(\frko)} S(B[U]).
\]
The Gross-Keating invariant (or the GK-invariant for short) $\ua=(a_1, a_2, \ldots, a_n)$ of $B$ is the greatest element of $S(\{B\})$ with respect to the lexicographic order $\succ$ on $\ZZn$.
Here, the lexicographic order $\succ$ is, as usual, defined as follows.
For $(y_1, y_2, \ldots, y_n),  (z_1, z_2, \ldots, z_n)\in \ZZ_{\geq 0}^n$, let $j$ be the largest integer such that $y_i=z_i$ for $i<j$.
Then $(y_1, y_2, \ldots, y_n)\succ  (z_1, z_2, \ldots, z_n)$ if $y_j>z_j$.
The Gross-Keating invariant  is denoted by  $\GK(B)$.
A sequence of length $0$ is denoted by $\emptyset$.
When $B$ is a matrix of degree $0$, we understand $\GK(B)=\emptyset$.

By definition, the Gross-Keating invariant $\GK(B)$ is determined only by the $GL_n(\frko)$-equivalence class of $B$.
We say that $B\in\calh_n(\frko)$ is an optimal form if $\GK(B)\in S(B)$.
Let $B \in \calh_n(\frko)$. Then $B$ is $GL_n(\frko)$-equivalent to an optimal form $B'$.
Then we say that $B$ has an optimal decomposition $B'$. 
We say that $B \in \calh_n(\frko)$ is a diagonal Jordan form if $B$ is expressed as 
\[B=\vpi^{a_1} u_1 \bot \cdots \bot \vpi^{a_n}u_n\]
with $a_1 \le \cdots \le a_n$ and $u_1,\cdots,u_n \in \frko^{\times}$.
 Then, in the non-dyadic case,  the diagonal Jordan form $B$ above  is optimal, and
$\GK(B)=(a_1,\ldots,a_n)$.
Therefore, the diagonal Jordan decomposition is an optimal decomposition. However, in the dyadic case, not all half-integral symmetric matrices have a diagonal Jordan decomposition, and  the Jordan decomposition is not necessarily an optimal decomposition.
\begin{definition}
\label{def.3.1} Let $\ua=(a_1,\ldots,a_n)$ be a non-decreasing sequence of non-negative integers. Write $\ua$ as
\[\ua=(\underbrace{m_1,\ldots,m_1}_{n_1},\ldots,\underbrace{m_r,\ldots,m_r}_{n_r})\]
with $m_1<\cdots<m_r$ and $n=n_1+\cdots+n_{r-1}+n_r$.
For $s=1,2,\ldots,r$ put
\[n_s^\ast=\sum_{u=1}^sn_u,\]
and
\[I_s=\{n_{s-1}^\ast+1,n_{s-1}^\ast+2,\ldots,n_s^\ast\}.\]
\end{definition}
\begin{definition} 
\label{def.3.2}
Let $B \in \calh_n(\frko)^{\rm nd}$ with $\GK(B)=(a_1,\ldots,a_n)$, and $n_1,\ldots,n_r,n_1^\ast,\ldots,n_r^\ast$ and $m_1,\ldots,m_r$ be those in Definition \ref{def.3.1}. Take an optimal decomposition $C$ of $B$, and for $s=1,\ldots,r$ we put
\[\zeta_s(C)=\zeta(C^{(n_s^\ast)}),\]
where 
$\zeta(C^{(n_s^\ast)})=\xi_{C^{(n_s^\ast)}}$ or $\zeta(C^{(n_s^\ast)})=\eta_{C^{(n_s^\ast)}}$ according as $n_s^\ast$ is even or odd.
Then $\zeta_s(C)$ does not depend on the choice of $C$ (cf. [\cite{Ike-Kat}, Theorem 0.4]), which will be denoted by $\zeta_s=\zeta_s(B)$.
Then we define $\EGK(B)$ as $\EGK(B)=(n_1,\ldots,n_r;m_1,\ldots,m_r;\zeta_1,\ldots,\zeta_r)$,
and we call it  the extended GK datum of $B$.
\end{definition}
From now on, until the end of this section, we assume that $q$ is even. We denote by $\frkS_n$ the symmetric group of degree $n$. Recall that a permutation $\sig\in \frkS_n$ is an involution if $\sig^2=\mathrm{id}$.

\begin{definition}
\label{def.3.3}
For an involution $\sig \in \frkS_n$ and a non-decreasing sequence $\ua=(a_1,\ldots,a_n)$ of non-negative integers , we set
\begin{align*}
\calp^0&=\calp^0(\sigma)=\{i\,| 1\leq i\leq n,\;  i=\sig(i)\}, \\
\calp^+&=\calp^+(\sigma)=\{i\,| 1\leq i\leq n,\;  a_i>a_{\sig(i)}\}, \\
\calp^-&=\calp^-(\sigma)=\{i\,| 1\leq i\leq n,\;  a_i<a_{\sig(i)}\}. 
\end{align*}
We  say that an involution $\sig\in\frkS_n$ is an $\underline{a}$-admissible involution if the following two conditions are satisfied.
\begin{itemize}
\item[(i)] 
$\calp^0$ has at most two elements.
If $\calp^0$ has two distinct elements $i$ and $j$, then $a_i\not\equiv a_j \text{ mod $2$}$. 
Moreover, if $i \in  I_s\cap \calp^0$, then $i$ is the maximal element of $I_s$, and
\[i=\max\{j \ | \ j \in \calp^0 \cup \calp^+ , a_j \equiv a_i \text{ mod } 2 \}.\]
\item[(ii)]
For $s=1, \ldots, r$,  there is at most one element in $I_s\cap\calp^-$.
If $i \in  I_s\cap\calp^-$, then $i$ is the maximal element of $I_s$ and
\[
\sig(i)=\min\{j\in \calp^+ \,| \, j>i,\, a_j\equiv a_i \text{ mod } 2\}.
\]
\item[(iii)]
For $s=1, \ldots, r$,  there is at most one element in $I_s\cap\calp^+$.
If $i \in  I_s\cap\calp^+$, then $i$ is the minimal element of $I_s$ and
\[
\sig(i)=\max\{j\in \calp^- \,| \, j<i,\, a_j\equiv a_i \text{ mod } 2\}.
\]
\item[(iv)]
If $a_i=a_{\sig(i)}$, then $|i -\sig(i)| \le 1$. 
\end{itemize}
\end{definition}
This is called a standard $\underline{a}$-admissible involution in \cite{Ike-Kat}, but in this paper we omit the word ``standard'', since we do not consider an $\underline{a}$-admissible involution which is not standard.

\begin{definition}
\label{def.3.4}
For $\ua=(a_1, \ldots, a_n)\in\ZZ_{\geq 0}^n$, put
\begin{align*}
\calm(\ua)&=\left\{B=(b_{ij})\in\calh_n(\frko)\,\vrule\, \begin{array}{ll}\ord(b_{ii})\geq a_i, \\ \ord(2 b_{ij})\geq (a_i+a_j)/2 \  (1\leq i < j\leq n)\end{array} \right\},  \\
\calm^0(\ua)&=\left\{B=(b_{ij})\in\calh_n(\frko)\,\vrule\, \begin{array}{ll}\ord(b_{ii}) > a_i, \\ \ord(2 b_{ij}) > (a_i+a_j)/2 \ (1\leq i < j\leq n)\end{array} \right\}.
\end{align*}
\end{definition}

\begin{definition}
\label{def.3.5}
Let $\sig\in\frkS_n$ be an $\ua$-admissible involution.
We say that $B=(b_{ij})\in \calm(\ua)$ is a reduced form with GK-type $(\ua, \sig)$ if the following conditions are satisfied.
\begin{itemize}
\item[(1)]  If $i\notin\calp^0$ and $j=\sig(i)$, then 
\[\ord(2 b_{i \,j})=\frac{a_i+a_j}{2} \]
\item[(2)] If $i\in\calp^0 \cup \calp^- $, then
\[
\ord(b_{ii})=a_i.
\]
\item[(3)] If $j\neq i, \sig(i)$, then
\[
\ord(2 b_{ij})>\frac{a_i+a_j}2.
\]
\end{itemize}
\end{definition}
We often say that $B$ is a reduced form with GK-type $\ua$ without mentioning $\sig$.
We formally think of a matrix of degree $0$ as a reduced form with GK-type $\emptyset$.
The following theorems are fundamental in our theory.
\begin{theorem}
\label{th.3.1}
{\rm ([\cite{Ike-Kat}, Corollary 5.1])} Let $B$ be a reduced form of $\GK$ type $(\ua,\sigma)$. Then we have $\GK(B)=\ua$.
\end{theorem}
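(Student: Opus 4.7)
The plan is to split the proof into an easy direction and a hard direction. For the easy direction, note that Definition \ref{def.3.5} together with the definition of $\calm(\ua)$ gives $\ord(b_{ii})\ge a_i$ and $\ord(2b_{ij})\ge (a_i+a_j)/2$ for all $i,j$, so $\ua\in S(B)$. Since $\ua$ is non-decreasing, $\ua\in S(B)\subseteq S(\{B\})$, and therefore $\GK(B)\succeq \ua$. The remaining task is to prove $\GK(B)\preceq \ua$.

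For the hard direction I would argue by induction on $n$, the base case $n=0$ being trivial. Suppose for contradiction that some $\ua'=(a_1',\dots,a_n')\in S(\{B\})$ satisfies $\ua'\succ \ua$, and let $k$ be the smallest index with $a_k'>a_k$. Fix $U\in\GL_n(\frko)$ with $\ua'\in S(B[U])$, and write $B'=(b_{ij}')=B[U]$. Since $\ua'$ is non-decreasing, $a_i'\ge a_k'>a_k$ for every $i\ge k$, so all entries $b_{ii}'$ and $2b_{ij}'$ with $i,j\ge k$ have higher valuation than the bounds that $\ua$ would impose. The strategy is to pick a scalar $GL_n(\frko)$-invariant of $B$ whose value is incompatible with these improved bounds, namely the valuation of $\det B^{(\ell)}$ (together with, in the dyadic case, the Hasse invariants $\xi_{B^{(\ell)}}$ and $\eta_{B^{(\ell)}}$ appearing in the EGK datum) for a suitably chosen $\ell$ near $k$.

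The key input is the combinatorial accounting provided by the $\ua$-admissible involution $\sigma$. For a reduced form with GK-type $(\ua,\sigma)$, the sharp equalities in Definition \ref{def.3.5}(1) and the location of $\calp^0,\calp^+,\calp^-$ forced by Definition \ref{def.3.3} determine exactly which $\vpi$-powers survive in each principal minor: each paired index $i\notin \calp^0$ contributes a $2\times 2$ antidiagonal-dominant block whose determinant has valuation precisely $a_i+a_{\sigma(i)}-2e_0$, while each fixed point in $\calp^0\cup\calp^-$ contributes a diagonal term of exact valuation. Summing these contributions shows that $\ord(\det B^{(\ell)})$ (with the appropriate $2$-adic correction) equals a prescribed function of $a_1,\dots,a_\ell$, and by $GL_n(\frko)$-invariance the same value must be reached from $B'$. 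Bounding $\ord(\det B'^{(\ell)})$ from below using $\ord(b_{ii}')\ge a_i'$ and $\ord(2b_{ij}')\ge (a_i'+a_j')/2$ with the choice $\ell=n_s^{\ast}$ where $k\in I_s$, then contradicts $a_k'>a_k$.

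The main obstacle is carrying out this accounting uniformly in the dyadic case. The factor $e_0=\ord(2)$ makes the naive bound $\ord(\det B^{(\ell)})\ge a_1+\cdots+a_\ell$ too weak; indeed, paired blocks coming from $\calp^+\cup\calp^-$ systematically reduce this valuation, and one must show that the admissibility conditions (i)--(iv) of Definition \ref{def.3.3} prescribe this deficit exactly, so that any strict improvement $a_k'>a_k$ forces a corresponding increase in the determinant valuation that no $GL_n(\frko)$-transformation can produce. Once the correct parity and $2$-adic bookkeeping is set up---this is essentially the content of the preparatory results of \cite{Ike-Kat} leading to their Corollary 5.1---the contradiction is immediate and the theorem follows by induction.
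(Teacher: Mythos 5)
Note first that the paper does not prove this statement at all: Theorem \ref{th.3.1} is imported verbatim from [\cite{Ike-Kat}, Corollary 5.1], so there is no in-paper argument to match yours against. The easy half of your proposal is correct and is indeed trivial: a reduced form lies in $\calm(\ua)$, so $\ua\in S(B)\subseteq S(\{B\})$ and $\GK(B)\succeq\ua$.

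The hard half, however, has a genuine gap. Your contradiction rests on treating $\ord(\det B^{(\ell)})$ as ``a scalar $GL_n(\frko)$-invariant of $B$,'' but the valuation of a leading principal minor is not invariant under $GL_n(\frko)$-equivalence: already for $B=\mathrm{diag}(1,\vpi^2)$ and $U$ the transposition matrix, $\ord(\det B^{(1)})$ jumps from $0$ to $2$. So the step ``by $GL_n(\frko)$-invariance the same value must be reached from $B'=B[U]$'' fails, and with it the whole comparison. Replacing it by honest invariants (ideals of $\ell\times\ell$ minors, or minima of Gram determinants over primitive rank-$\ell$ submodules) does not repair the argument cheaply: from $\ua'\in S(B[U])$ you only get lower bounds on the entries of $B[U]$, the dyadic loss in a determinant is $2e_0$ per antidiagonal pair, and the number of such pairs on the $B[U]$ side is not controlled, since membership in $S(B[U])$ carries no admissible involution; moreover for $i>k$ one may have $a_i>a'_i$, so your claim that all $\ua$-bounds are beaten for indices $\geq k$ is also false, and lexicographic superiority does not translate into the needed inequality on any fixed partial determinant. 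Finally, your closing appeal to ``the preparatory results of \cite{Ike-Kat} leading to their Corollary 5.1'' is circular as a standalone proof: those results (exact valuation and discriminant/Clifford computations for reduced forms and their leading blocks, e.g.\ the identity $\frke_{B^{(\ell)}}=\frke_\ell(\ua)$, together with a delicate induction) are precisely the content of the theorem you are asked to establish, not background one may assume.
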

\begin{theorem}
\label{th.3.2} 
{\rm  (cf. [\cite{Ike-Kat}, Theorem 4.3])}  
Assume that $\mathrm{GK}(B)=\underline{a}$ for $B\in \calh_n(\frko)^\mathrm{nd}$.
Then $B$ is $GL_n(\frko)$-equivalent to a reduced form of $\GK$ type $(\underline{a}, \sigma)$ for some   $\underline{a}$-admissible involution $\sigma$.
\end{theorem}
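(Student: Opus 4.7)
The plan is to start from the definition of $\GK(B) = \ua$: since $\ua \in S(\{B\})$, after a preliminary $\GL_n(\frko)$-equivalence we may assume $B \in \calm(\ua)$, i.e.\ that all entries already satisfy the required lower bounds on valuations. The task then reduces to constructing a further $U \in \GL_n(\frko)$ and an $\ua$-admissible involution $\sigma$ such that $B[U]$ satisfies the three sharpness conditions of Definition \ref{def.3.5}.

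I would proceed by induction on the number $r$ of distinct values $m_1 < \cdots < m_r$ in $\ua$, working block by block. Within a single block $I_s$, factor out $\vpi^{m_s}$: the resulting submatrix $B_s$ is half-integral, and when $B_s \bmod \frkp$ is non-degenerate one can use only $\GL_{n_s}(\frko)$-equivalence to put it into a Jordan decomposition, i.e.\ a direct sum of binary (hyperbolic or anisotropic) planes and at most one unary summand. The binary summands produce intra-block pairs of $\sigma$, the unary summands contribute indices of $\calp^0$, and admissibility condition (iv) together with a reordering forces each intra-block pair to consist of adjacent indices. This handles the non-dyadic case entirely, since there $B_s \bmod \frkp$ is automatically non-degenerate.

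The subtlety specific to the dyadic setting arises when $2$-adic obstructions force some $B_s \bmod \frkp$ to be singular. In that case a diagonal entry $b_{ii}$ inside $I_s$ must have valuation strictly greater than $a_i$; to keep $\GK(B)=\ua$ intact, the missing valuation has to be compensated by a cross-block off-diagonal entry $b_{ij}$ with $j \in I_{s\pm 1}$ of exact valuation $(a_i+a_j)/2$. Conditions (ii) and (iii) of Definition \ref{def.3.3} are calibrated precisely so that such a situation can only produce a boundary-element pairing, between an index in $\calp^-$ and one in $\calp^+$, and only when the parities of the two exponents agree. Collecting these cross-block pairings together with the intra-block ones from the previous paragraph yields the candidate $\ua$-admissible involution $\sigma$.

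The main obstacle is the simultaneous clearing step: once the pairs of $\sigma$ are identified, we must show that further $\GL_n(\frko)$-equivalences can be arranged to drive every non-paired off-diagonal entry to valuation strictly greater than $(a_i+a_j)/2$ (condition (3) of Definition \ref{def.3.5}) and to make the diagonal entries at $\calp^0 \cup \calp^-$-indices exact of valuation $a_i$ (condition (2)), all without decreasing the GK-invariant, i.e.\ without leaving $\calm(\ua)$. The argument is a valuation-respecting Gram--Schmidt procedure: for each pair $(i,\sigma(i))$ one uses the pivot $b_{i,\sigma(i)}$, whose valuation is minimal in its row and column by the admissibility hypothesis, to clear the remaining $b_{i,k}$ by $\frko$-linear column operations, verifying in each case that the modification only raises the valuations of the other entries. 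Carrying this out sequentially for $s=1,\ldots,r$ and invoking Theorem \ref{th.3.1} at the end yields a reduced form of GK type $(\ua,\sigma)$, as desired.
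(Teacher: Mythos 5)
The paper does not prove Theorem \ref{th.3.2} at all: it is imported from [\cite{Ike-Kat}, Theorem 4.3], where the passage from an optimal form to a reduced form is one of the main results and takes up a substantial part of that paper. So your sketch has to stand on its own, and it does not: the decisive step, your ``valuation-respecting Gram--Schmidt,'' is asserted rather than proved, and it contains a circularity. You justify using the pivot $b_{i,\sigma(i)}$ by saying its valuation ``is minimal in its row and column by the admissibility hypothesis,'' but admissibility (Definition \ref{def.3.3}) is a condition on the pair $(\ua,\sigma)$ alone and says nothing about the entries of $B$; the statement $\ord(2b_{i,\sigma(i)})=(a_i+a_{\sigma(i)})/2$ is condition (1) of Definition \ref{def.3.5}, i.e.\ part of what has to be produced. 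Nothing in $\GK(B)=\ua$ and $B\in\calm(\ua)$ hands you such exact-valuation pivots, nor diagonal entries of exact valuation $a_i$ at the indices of $\calp^0\cup\calp^-$; manufacturing them by $\GL_n(\frko)$-moves while staying inside $\calm(\ua)$ (equivalently, without destroying the maximality of $\ua$), and checking that each clearing operation raises, and never disturbs, the valuations already normalized, is precisely the content of the theorem and is exactly where the dyadic difficulties sit. ``Verifying in each case'' is the whole proof, not a remark.

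The combinatorial side is likewise unestablished. You never argue why $\calp^0$ can be arranged to have at most two elements, with $a_i\not\equiv a_j \bmod 2$ when there are two, nor why the cross-block pairings can be chosen to satisfy the min/max clauses of conditions (ii)--(iii); your claim that the compensating entry lies in an adjacent block $I_{s\pm1}$ is not what those conditions say (the partner is the nearest index of matching parity in $\calp^{\mp}$, possibly several blocks away), and condition (iv) also needs an argument rather than ``a reordering.'' Finally, the assertion that in the non-dyadic case each block of $B\in\calm(\ua)$ is automatically unimodular modulo $\frkp$ is not justified as stated; the standard non-dyadic route is instead to diagonalize $B$ and observe that a diagonal Jordan form is optimal with $\GK$ equal to the Jordan exponents (as recalled in Section 3 of the paper), which makes your block analysis unnecessary there and highlights that your outline has no independent mechanism for the dyadic case, where diagonalization fails. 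As it stands the proposal is a plausible roadmap, not a proof; to repair it you would need the analogues of the lemmas of Section 4 of \cite{Ike-Kat} (the transformation lemmas invoked in this paper as [\cite{Ike-Kat}, Lemmas 4.1--4.4]) that control exactly these normalizations.
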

By Theorem \ref{th.3.2}, any non-degenerate  half-integral symmetric matrix $B$ over $\frko$ is $GL_n(\frko)$-equivalent to  a reduced form $B'$. Then we say that $B$ has a reduced decomposition $B'$.

The following theorem plays an important role in proving our main result.
\begin{theorem} 
\label{th.3.3}
Let $B$ and $B'$ be  elements of $\calh_n({\mathfrak o})$.  Assume that
 $B$ is a reduced  form of $\GK$-type $(\ua,\sig)$ and that 
$B'-B \in \calm^0(\ua)$. Then $B'$ is also a reduced  form of the same type as $B$.
Moreover we have $\EGK(B')=\EGK(B)$.  
\end{theorem}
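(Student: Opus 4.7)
The plan is to split the theorem into three pieces: first verify directly that $B'$ satisfies conditions (1)--(3) of Definition \ref{def.3.5} for the same pair $(\ua,\sig)$, then invoke Theorem \ref{th.3.1} to conclude the equality of Gross-Keating invariants, and finally handle the stability of the $\zeta_s$ invariants, which is the substantive part of the statement.

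Write $B=(b_{ij})$, $B'=(b'_{ij})$, and $E=B'-B=(e_{ij})$. By hypothesis $E\in\calm^0(\ua)$, so $\ord(e_{ii})>a_i$ and $\ord(2e_{ij})>(a_i+a_j)/2$. For each of the three conditions defining a reduced form of $\GK$-type $(\ua,\sig)$, the valuation of the relevant entry of $B$ is prescribed \emph{exactly} while the valuation of the perturbation is \emph{strictly larger}; hence the valuation of $b'_{ij}=b_{ij}+e_{ij}$ agrees with that of $b_{ij}$, and the same equalities and inequalities persist. Thus $B'$ is a reduced form of the same $\GK$-type as $B$, and Theorem \ref{th.3.1} yields $\GK(B')=\ua=\GK(B)$. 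In particular the $(n_1,\ldots,n_r;m_1,\ldots,m_r)$ components of $\EGK(B')$ and $\EGK(B)$ coincide.

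It remains to show $\zeta_s(B')=\zeta_s(B)$ for $s=1,\ldots,r$. Since any reduced form lies in $\calm(\ua)$, both $B$ and $B'$ are optimal, so by Definition \ref{def.3.2} one can take $\zeta_s(B)=\zeta(B^{(n_s^*)})$ and $\zeta_s(B')=\zeta((B')^{(n_s^*)})$, using $B$ and $B'$ themselves as the optimal decompositions. The upper-left blocks satisfy
\[
(B')^{(n_s^*)}-B^{(n_s^*)}\in\calm^0\bigl((a_1,\ldots,a_{n_s^*})\bigr),
\]
and both have $\GK$-invariant $(a_1,\ldots,a_{n_s^*})$ by the same argument applied to the block structure (this is essentially contained in the results of [Ike-Kat]). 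The problem therefore reduces to the following stability statement: if $C\in\calh_k(\frko)^{\mathrm{nd}}$ has Gross-Keating invariant $\ua'=(a_1,\ldots,a_k)$ and $E_0\in\calm^0(\ua')$, then $\xi_{C+E_0}=\xi_C$ when $k$ is even, and $\eta_{C+E_0}=\eta_C$ when $k$ is odd.

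The main obstacle is precisely this stability statement. For $\xi$, the plan is to expand $\det(C+E_0)$ and use the reduced form structure of $C$ to show that the leading term equals $\det C$ modulo a sufficiently high power of $\vpi$ so that the two determinants represent the same class in $F^\times/F^{\times 2}$ and define the same local extension $F(\sqrt{D_{\cdot}})/F$ (in particular the ramification type is unchanged). For $\eta$, a more delicate argument is needed for the Clifford invariant: one fixes a Witt-type decomposition compatible with the reduced $\GK$-type of $C$ and checks that each $\calm^0$-perturbation can be absorbed into a unit change of basis that preserves both the Hasse symbol $\vep_C$ and the Hilbert symbol factors entering $\eta_C$. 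These stability assertions were essentially established in [Ike-Kat], and the plan is to invoke them here; the bookkeeping step is to track carefully how the prescribed valuations on the entries of the reduced form interact with the minors appearing in the determinant and Clifford invariant expansions.
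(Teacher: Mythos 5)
Your proposal is correct and takes essentially the same route as the paper: the paper likewise disposes of the reduced-form claim as an easy direct check of Definition \ref{def.3.5} (giving $\GK(B')=\ua$ via Theorem \ref{th.3.1}) and obtains $\EGK(B')=\EGK(B)$ by invoking [\cite{Ike-Kat}, Proposition 3.3], which is precisely the corner-block stability of $\xi$ and $\eta$ that you reduce to and then defer to \cite{Ike-Kat}. One small caution: your auxiliary stability statement should carry the hypothesis that $C$ lies in $\calm(\ua')$ (e.g.\ is reduced, as it is in your application $C=B^{(n_s^\ast)}$), since for a general nondegenerate $C$ with $\GK(C)=\ua'$ but entries not respecting $\ua'$ an $\calm^0(\ua')$-perturbation can change the square class of $\det C$, and the naive determinant expansion alone does not suffice without the valuation estimates on $C^{-1}$ from \cite{Ike-Kat}.
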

\begin{proof} The first assertion can easily be proved. The second assertion follows from [\cite{Ike-Kat}, Proposition 3.3]. 
\end{proof}

\section{Laurent polynomial attached to EGK datum}
We recall the definition of naive EGK datum (cf. \cite{Ike-Kat}).  Let ${\mathcal Z}_3=\{0,1,-1 \}$. 
\begin{definition} 
\label{def.4.1}
An element $(a_1,\ldots,a_n;\vep_1,\ldots,\vep_n)$
of $\ZZ_{\ge 0}^n \times {\mathcal Z}_3^n$ is said to be a naive  EGK datum of length $n$  if the following conditions hold:
\begin{itemize}
\item [(N1)] $a_1 \le \cdots \le a_n$.
\item [(N2)] Assume that $i$ is even. Then $\vep_i \not=0$ if and only if $a_1+\cdots+a_i$ is even.
\item [(N3)] Assume that $i$ is odd. Then $\vep_i \not=0$. 
\item[(N4)]  $\vep_1=1$.
\item[(N5)]  Let $i  \ge 3$ be an odd integer and assume that $a_1+\cdots + a_{i-1}$ is even.  Then $\vep_i=\vep_{i-1}^{a_i+a_{i-1}}\vep_{i-2}$.
\end{itemize}
We denote by  $\mathcal{NEGK}_n$ the set of all naive $\EGK$ data of length $n$. 
\end{definition}
\begin{definition} 
\label{def.4.2} 
For integers $e,\widetilde e$,  a real number $\xi$, and $i=0,1$ define  rational functions $C(e,\widetilde e,\xi;Y,X)$ 
and $D(e,\widetilde e,\xi;Y,X)$ in $Y^{1/2}$ and $X^{1/2}$ by 
\[C(e,\widetilde e,\xi;Y,X)={Y^{\widetilde e/2}X^{-(e- \widetilde e)/2-1}(1-\xi Y^{-1} X)  \over X^{-1}-X} \]
and
\[D(e,\widetilde e,\xi;Y,X)= {Y^{\widetilde e/2}X^{-(e-\widetilde e)/2}   \over 1- \xi X} .\]
\end{definition}
For a positive integer $i$  put 
\[C_i(e,\widetilde e,\xi;Y,X)= \begin{cases}
C(e,\widetilde e,\xi;Y,X) &  \text { if  $i$ is even } \\
D(e,\widetilde e,\xi;Y,X) & \text{ if $i$ is odd.}
\end{cases}.\]
\begin{definition}
\label{def.4.3}
For a sequence $\underline a=(a_1,\ldots,a_n)$ of integers and an integer $1 \le i \le n$, we define $\frke_i=\frke_i(\underline a)$ as
\[\frke_i=
\begin{cases} a_1+\cdots +a_i  & \text{ if  $i$ is odd} \\
2[(a_1+\cdots+a_i)/2] & \text{ if $i$ is even.}
\end{cases}\]
 We also put $\frke_0=0$.
\end{definition}
  
\begin{definition}
\label{def.4.4}
For a naive EGK datum $H=(a_1,\ldots,a_n;\vep_1,\ldots,\vep_n)$ we define a rational function $\calf(H;Y,X)$ in $X^{1/2}$ and $Y^{1/2}$ as follows:
First we define
\[\calf(H;Y,X)=X^{-a_1/2}+X^{-a_1/2+1}+\cdots+X^{a_1/2-1}+X^{a_1/2}\]
if $n=1$. Let  $n>1$. Then $H'= (a_1,\ldots,a_{n-1};\vep_1,\ldots,\vep_{n-1})$ is a naive EGK datum of length $n-1$. 
Assume that $\calf(H';Y,X)$ is defined for $H'$. Then, we define $\calf(H;Y,X)$ as
\begin{align*}
&\calf(H;Y,X)=C_n(\frke_n,\frke_{n-1},\xi;Y,X)\calf(H';Y,YX)\\
&+\zeta C_n(\frke_n,\frke_{n-1},\xi;Y,X^{-1})\calf(H';Y,YX^{-1}),
\end{align*}
where $\xi=\vep_n$ or $\vep_{n-1}$ according as $n$ is even or odd, and $\zeta=1$ or $\vep_n$ according as $n$ is even or odd.
\end{definition}
By the definition of $\calf(H;Y,X)$ we easily see the following.

\begin{proposition}
\label{prop.4.1}
Let $H=(a_1,\ldots,a_n;\vep_1,\ldots,\vep_n)$ be a naive $\EGK$ datum of length $n$. Then we have
\begin{align*}
& \calf(H;Y,X) \\
&=\sum_{(i_1,\ldots,i_n) \in \{\pm 1 \}^n} \eta_n^{(1-i_n)/2} C_{n}(\frke_n,\frke_{n-1},\xi_n;Y,X^{i_n})\\
& \times \prod_{j=1}^{n-1}\eta_j^{(1-i_j)/2} C_{j}(\frke_j,\frke_{j-1},\xi_j;Y,Y^{i_j+i_ji_{j+1}+\cdots+i_ji_{j+1}\cdots i_{n-1}}X^{i_j\cdots i_n}),
\end{align*}
where 
\[\xi_j=\begin{cases} \vep_j & \text{ if $j$ is even} \\
\vep_{j-1} & \text{ if $j$ is odd},
\end{cases}\]
and
\[\eta_j=\begin{cases} 1 & \text{ if $j$ is even} \\
\vep_j & \text{ if $j$ is odd}
\end{cases}\]
 for $1 \le j \le n$.  In particular, 
\[\calf(H;Y,X^{-1})=\eta_n \calf(H;Y,X).\]
\end{proposition}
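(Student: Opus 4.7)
The plan is to prove Proposition 4.1 by induction on $n$, since the proposition simply unrolls the two-term recursion in Definition 4.4. For the base case $n=1$, the claimed formula reduces to $D(a_1,0,\xi_1;Y,X) + \eta_1 D(a_1,0,\xi_1;Y,X^{-1})$, with $\eta_1=\vep_1=1$ by (N4) and with the natural convention $\xi_1=1$. Since $D(e,0,1;Y,X) = X^{-e/2}/(1-X)$, the sum collapses via the standard geometric series identity to $X^{-a_1/2}(1-X^{a_1+1})/(1-X) = X^{-a_1/2} + \cdots + X^{a_1/2}$, matching the base clause of Definition 4.4.

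For the inductive step, I assume the formula holds for $H'=(a_1,\ldots,a_{n-1};\vep_1,\ldots,\vep_{n-1})$ and substitute it into the recursion
\[
\calf(H;Y,X) = C_n(\frke_n,\frke_{n-1},\xi_n;Y,X)\calf(H';Y,YX) + \eta_n C_n(\frke_n,\frke_{n-1},\xi_n;Y,X^{-1})\calf(H';Y,YX^{-1}),
\]
identifying the two summands with the $i_n=+1$ and $i_n=-1$ contributions of the $n$-case. The combinatorial check is that under $X \mapsto YX^{i_n}$ the $(n-1)$-formula term $C_j(\cdots;Y, Y^{i_j+\cdots+i_j\cdots i_{n-2}} X^{i_j\cdots i_{n-1}})$ becomes $C_j(\cdots;Y, Y^{i_j+\cdots+i_j\cdots i_{n-1}} X^{i_j\cdots i_n})$, which is precisely the $n$-case expression for the $j$-th factor; the leading $C_{n-1}$-factor picks up exactly the $Y^{i_{n-1}}$ needed; and the prefactor $\eta_n$ on the $i_n=-1$ branch of the recursion matches $\eta_n^{(1-i_n)/2}$ evaluated at $i_n=-1$.

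For the final symmetry $\calf(H;Y,X^{-1})=\eta_n\calf(H;Y,X)$, I change variables $j_k=i_k$ for $k<n$ and $j_n=-i_n$ in the summation. Since every $Y$-exponent in the formula terminates at the product $i_j\cdots i_{n-1}$, it involves no $i_n$ and is invariant; the $X$-exponents $i_j\cdots i_n$ and $i_n$ both flip sign, offsetting the replacement $X\mapsto X^{-1}$; and $\eta_n^{(1-i_n)/2}=\eta_n\cdot\eta_n^{(1-j_n)/2}$ because $\eta_n\in\{\pm1\}$ (it equals $1$ for even $n$ and $\vep_n\neq 0$ for odd $n$ by (N3)). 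The main obstacle is purely bookkeeping: one must see that the telescoping $Y$-exponent $i_j + i_ji_{j+1} + \cdots + i_j\cdots i_{n-1}$ is tailored so that extending it by the single term $i_j\cdots i_{n-1}$ absorbs exactly the $Y^{i_j\cdots i_{n-1}}$ produced by substituting $X\mapsto YX^{\pm 1}$, which is what makes the induction collapse cleanly without residual cross-terms.
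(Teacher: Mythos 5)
Your induction is exactly the unrolling of the recursion in Definition \ref{def.4.4} that the paper has in mind when it says the formula follows "by the definition of $\calf(H;Y,X)$," and each step checks out (the base case, the bookkeeping of the $Y$- and $X$-exponents under $X\mapsto YX^{\pm1}$, and the reindexing $i_n\mapsto -i_n$ using $\eta_n\in\{\pm1\}$). The only point worth flagging is your convention $\xi_1=\vep_0=1$ in the base case, which is indeed the reading consistent with the paper's convention that $\xi$, $\eta$, $\frke$ equal $1,1,0$ for the empty matrix, so your proof is correct and essentially identical in approach.
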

\bigskip
\bigskip
\begin{proposition} 
\label{prop.4.2} Let $H=(a_1,\ldots,a_n;\vep_1,\ldots,\vep_n)$ be a naive $\EGK$ datum of length $n$. Then
 $\calf(H;Y,X)$ is a  Laurent polynomial in $X^{1/2}$ with coefficients in  $\ZZ[Y,Y^{-1}]$. 
\end{proposition}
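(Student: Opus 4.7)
The plan is to proceed by induction on the length $n$ of the naive EGK datum $H$. The base case $n=1$ is immediate: Definition \ref{def.4.4} gives $\calf(H;Y,X)=\sum_{j=0}^{a_1} X^{j-a_1/2}$, which is manifestly a Laurent polynomial in $X^{1/2}$ with integer coefficients.

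For the inductive step, set $H'=(a_1,\ldots,a_{n-1};\vep_1,\ldots,\vep_{n-1})$, and let $\xi,\zeta$ be as in Definition \ref{def.4.4}. The recursion reads
\[
\calf(H;Y,X) \;=\; A(X)+\zeta\,A(X^{-1}), \qquad A(X):=C_n(\frke_n,\frke_{n-1},\xi;Y,X)\,\calf(H';Y,YX).
\]
By the inductive hypothesis, $\calf(H';Y,YX)$ is a Laurent polynomial in $X^{1/2}$ (whose coefficients a priori live in $\ZZ[Y^{1/2},Y^{-1/2}]$ due to the substitution $X\mapsto YX$), so $A(X)$ is a rational function in $X$ whose only possible poles come from the denominator of $C_n$. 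It then suffices to verify: (a) combining $A(X)$ with $\zeta A(X^{-1})$ clears these poles, and (b) the resulting coefficients lie in $\ZZ[Y,Y^{-1}]$.

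For (a), I would argue by cases. If $n$ is even then $C_n=C$ and $\zeta=1$; the two summands are exchanged by $X\leftrightarrow X^{-1}$, and the combined numerator over the common denominator $X^{-1}-X$ vanishes at $X=1$ by manifest symmetry and at $X=-1$ (i.e.\ along $w^2+1=0$, where $w=X^{1/2}$) by a short parity check on the exponent $(\frke_n-\frke_{n-1})/2$. If $n$ is odd and $\vep_{n-1}=0$, the denominator of $D$ is trivial and $A(X)$ itself is already Laurent. If $n$ is odd and $\vep_{n-1}=\pm 1$, then both $A(X)$ and $\vep_n A(X^{-1})$ have poles along the common divisor $1-\vep_{n-1}X=0$; writing the sum over a common denominator and reducing the numerator modulo that divisor, cancellation is forced by condition (N5) of Definition \ref{def.4.1} together with a specialization of the form $\calf(H';Y,Y\vep_{n-1})$ that I would compute via the explicit formula in Proposition \ref{prop.4.1}.

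For (b), I would strengthen the inductive hypothesis to include the auxiliary parity statement that every exponent of $X^{1/2}$ occurring in $\calf(H;Y,X)$ is congruent to $\frke_n$ modulo $2$. Under $X\mapsto YX$ this parity makes the half-integer powers of $Y$ from the substitution combine with the prefactor $Y^{\frke_{n-1}/2}$ of $C_n$ to produce only integer powers of $Y$, using the congruence $\frke_n\equiv\frke_{n-1}+a_n\pmod 2$ that follows from Definition \ref{def.4.3}. That the parity statement is preserved under the recursion is a direct inspection of the two terms in Definition \ref{def.4.4}.

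The main obstacle I anticipate is the pole cancellation in the odd-$n$ subcase with $\vep_{n-1}=\pm 1$: a naive residue computation does not by itself match (N5), and one must also exploit the fact that $\calf(H';Y,Y\vep_{n-1})$ vanishes in precisely those subcases where such a direct matching would fail. Unpacking this vanishing from Proposition \ref{prop.4.1} and checking consistency across all subcases, organized by the parities of $a_{n-1},a_n$ and the sign of $\vep_{n-1}$, will be the technically most delicate step; the remaining cases and the coefficient-ring bookkeeping are routine.
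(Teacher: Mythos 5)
Your overall strategy (induction on $n$ via the recursion of Definition \ref{def.4.4}, with explicit pole cancellation and parity bookkeeping) is genuinely different from the paper's argument, and your diagnosis of where the difficulty sits is accurate: in the odd-$n$ case with $\vep_{n-1}=\pm 1$, matching the two residues at $1-\vep_{n-1}X=0$ forces the relation $\vep_n=\vep_{n-1}^{a_n}$, whereas (N5) only gives $\vep_n=\vep_{n-1}^{a_n+a_{n-1}}\vep_{n-2}$; these agree exactly when $\vep_{n-1}^{a_{n-1}}\vep_{n-2}=1$, and in the complementary subcase one really does need the vanishing $\calf(H';Y,Y\vep_{n-1})=0$ (for instance, for $H'=(1,1;1,-1)$ one has $\calf(H';Y,X)=X^{-1}+X+Y+Y^{-1}$, which indeed vanishes at $X=-Y$). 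But this is precisely where your proposal stops being a proof: that vanishing is not a formal consequence of the displayed formula in Proposition \ref{prop.4.1}. In the recursion for $\calf(H';Y,X)$ only one of the two $C$-terms is killed by the factor $(1-\vep_{n-1}Y^{-1}X)$ at $X=\vep_{n-1}Y$; the other term vanishes only because of a further special-value identity one level deeper (in the example above, $\calf((1);Y,-1)=(-1)^{1/2}+(-1)^{-1/2}=0$), so the needed vanishing lemma requires its own induction and case analysis, which you have not supplied. Until that lemma (or an equivalent divisibility statement for $\calf(H';Y,X)$) is proved, the pole-cancellation step — the heart of the argument — is a genuine gap, not routine bookkeeping.

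For comparison, the paper avoids this entirely by a specialization argument: it sets $\calf(H;Y,X)'=X^{\frke_n/2}\calf(H;Y,X)$, writes it via Proposition \ref{prop.4.1} as $P(X,Y)/Q(X,Y)$ with $P,Q\in\ZZ[X,Y,Y^{-1}]$ and $Q$ monic in $X$, and then invokes [Ike-Kat, Remark 6.1, Proposition 6.3] together with [Kat1, Theorem 4.3] to conclude that the specialization $Y\mapsto p^{1/2}$ yields $Q\mid P$ for every odd prime $p$ (because the specialized series is a genuine Siegel series polynomial); since this holds for infinitely many specializations and $Q$ is monic in $X$, one gets $Q\mid P$ in $\ZZ[X,Y,Y^{-1}]$. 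That route buys a short proof at the cost of quoting prior results, whereas your route, if completed with the vanishing lemma and the parity statement on exponents (which also controls the $Y^{1/2}$-powers), would give a self-contained, purely algebraic proof; as submitted, however, it is an outline with its central claim unestablished.
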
 

\begin{proof} Put $\calf(H;Y,X)'=X^{\frke_n/2}\calf(H;Y,X)$. It suffices to show that $\calf(H;Y,X)'$ is a polynomial in $X$ with coefficients in $\ZZ[Y,Y^{-1}]$. By definition, we easily see that
$\calf(H;Y,X)'$ belongs to $\QQ(X,Y)$. Moreover, by Proposition \ref{prop.4.1},  $\calf(H;Y,X)'$ can be expressed as
\[ \calf(H;Y,X)'=\frac{P(X,Y)}{Q(X,Y)}\]
with $P(X,Y),Q(X,Y) \in \ZZ[X,Y,Y^{-1}]$ such that $Q(X,Y)$ is a monic polynomial in $X$ with coefficients in $\ZZ[Y,Y^{-1}]$.
By [\cite{Ike-Kat}, Remark 6.1, Proposition 6.3
], and  [\cite{Kat1}, Theorem 4.3], $Q[X,p^{1/2}]$ divides $P[X,p^{1/2}]$ for any odd prime $p$  in $\QQ[X,p^{1/2}]$.
It follows that $Q(X,Y) $ divides $P(X,Y)$ in $\ZZ[X,Y,Y^{-1}]$. This proves the assertion.
\end{proof}

\begin{proposition} 
\label{prop.4.3}
Let $H=(a_1,\ldots,a_n;\vep_1,\ldots,\vep_n)$ be a naive  $\EGK$ datum of length $n$ and
$H''=(a_1,\ldots,a_{n-2};\vep_1,\ldots,\vep_{n-2})$. Then $H''$ is a naive $\EGK$ datum of length $n-2$. Assume that $a_{n-1}=a_n$.
 Then  the following assertions hold.
\begin{itemize}
\item [(1)] Assume that $n$ is odd  and  $a_1+\cdots +a_{n-1}$ is even. Then we have
\begin{align*}
\calf(H;Y,X)&=Y^{\frke_{n-2}-1} \Biggl \{ {X^{(-\frke_n+\frke_{n-2})/2 -1}  \over (YX)^{-1}-YX}\calf(H'';Y,Y^2X)\\
&+{\vep_n X^{(\frke_n-\frke_{n-2})/2 +1}  \over (YX^{-1})^{-1}-YX^{-1}}\calf(H'';Y,Y^2X^{-1})\Biggr \}\\
&+ {Y^{\frke_{n-1}} (Y^2-Y^{-2}) \vep_n \over ((YX)^{-1}-YX)((YX^{-1})^{-1}-YX^{-1})}\calf(H'';Y,X) .
\end{align*}
In particular, $\calf(H;Y,X)$ does not depend on $\vep_{n-1}$.\\
\item [(2)] Assume that  $n$ is even and  $a_1+\cdots +a_n$ is odd. Then we have  
\begin{align*}
\calf(H;Y,X)&=Y^{\frke_{n-2}} \Biggl \{{X^{(-\frke_n+\frke_{n-2})/2 -1}  \over X^{-1}-X}\calf(H'';Y,Y^2X)\\
&+{X^{(\frke_n- { e}_{n-2})/2 +1}  \over X-X^{-1}}\calf(H'';Y,Y^2X^{-1})\Biggr \}.
\end{align*}
In particular, $\calf(H;Y,X)$ does not depend on $\vep_{n-1}$.
\end{itemize}

\end{proposition}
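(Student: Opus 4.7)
The plan is to apply the defining recursion of Definition~\ref{def.4.4} in two stages. Stage one expands $\calf(H;Y,X)$ as a linear combination of $\calf(H';Y,YX)$ and $\calf(H';Y,YX^{-1})$; stage two expands each of these further, via the recursion applied to $H'$, as a linear combination of $\calf(H'';Y,Y^2X^{\pm 1})$ and $\calf(H'';Y,X^{\pm 1})$. Collecting, one writes $\calf(H;Y,X)$ as a sum of four contributions whose coefficients are explicit rational functions of $X,Y$ depending on $\vep_{n-1}$ and $\vep_n$. The functional equation $\calf(H'';Y,X^{-1})=\eta_{n-2}\calf(H'';Y,X)$ of Proposition~\ref{prop.4.1} then allows the two contributions involving $\calf(H'';Y,X^{\pm 1})$ to be merged into a single coefficient times $\calf(H'';Y,X)$. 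In Case~(1), $n-2$ is odd so $\eta_{n-2}=\vep_{n-2}$; condition~(N5) together with (N2)--(N3)---which, given that $a_1+\cdots+a_{n-1}$ is even, force $\vep_{n-1}\in\{\pm 1\}$ and hence $\vep_{n-1}^{a_n+a_{n-1}}=\vep_{n-1}^{2a_n}=1$---yield $\vep_{n-2}=\vep_n$. In Case~(2), $n-2$ is even and one checks that $a_1+\cdots+a_{n-2}$ is odd, so (N2) gives $\vep_{n-2}=0$ and $\eta_{n-2}=1$.

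The coefficients of $\calf(H'';Y,Y^2X^{\pm 1})$ simplify at once. In Case~(1), the numerator $(1-\vep_{n-1}Y^{-1}Z)$ inside $C(\frke_{n-1},\frke_{n-2},\vep_{n-1};Y,Z)$ with $Z=YX^{\pm 1}$ equals $(1-\vep_{n-1}X^{\pm 1})$, which cancels the corresponding denominator produced by the outer factor $D(\frke_n,\frke_{n-1},\vep_{n-1};Y,X^{\pm 1})$; using $a_{n-1}=a_n$, the identities $\frke_{n-1}-\frke_{n-2}=a_{n-1}$ and $\frke_n-\frke_{n-1}=a_n$ collapse the exponents of $Y$ and $X$ to exactly those in the stated formula, with the prefactor $Y^{\frke_{n-2}-1}$ emerging. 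Case~(2) is analogous, with $\xi_n=\vep_n=0$ and $\xi_{n-1}=\vep_{n-2}=0$ trivialising the $(1-\xi\cdot\,)$ factors, and the parity-adjusted identities $\frke_n-\frke_{n-1}=a_n-1$, $\frke_{n-1}-\frke_{n-2}=a_{n-1}+1$ giving the exponents in~(2) with prefactor $Y^{\frke_{n-2}}$.

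The main obstacle is the combined coefficient of $\calf(H'';Y,X)$. In Case~(1), after factoring out $\vep_n Y^{\frke_{n-1}+1}$ and multiplying out the denominators $(1-\vep_{n-1}X^{\pm 1})$ and $YX^{\pm 1}-(YX^{\pm 1})^{-1}$, the required equality reduces (writing $\vep=\vep_{n-1}$ and using $\vep^2=1$) to the elementary Laurent identity
\begin{align*}
Y\bigl[X(1-\vep Y^{-2}X^{-1})(YX^{-1}-Y^{-1}X)(1-\vep X^{-1})&\\
{}+X^{-1}(1-\vep Y^{-2}X)(YX-Y^{-1}X^{-1})(1-\vep X)\bigr]&=(Y^2-Y^{-2})\bigl(2-\vep(X+X^{-1})\bigr),
\end{align*}
which is verified by direct expansion: all mixed terms cancel pairwise using $\vep^2=1$, and the bracket equals $(Y-Y^{-3})(2-\vep(X+X^{-1}))$; multiplication by $Y$ gives the right-hand side. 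Since $(1-\vep X)(1-\vep X^{-1})=2-\vep(X+X^{-1})$, dividing out restores the claimed $\vep_{n-1}$-free coefficient $Y^{\frke_{n-1}}(Y^2-Y^{-2})\vep_n/[((YX)^{-1}-YX)((YX^{-1})^{-1}-YX^{-1})]$. In Case~(2), the same two-step substitution with $\xi_{n-1}=0$ produces two contributions to $\calf(H'';Y,X^{\pm 1})$ that are equal in magnitude but opposite in sign (arising from the denominators $X^{-1}-X$ and $X-X^{-1}$); after applying $\calf(H'';Y,X^{-1})=\calf(H'';Y,X)$ they cancel exactly, so no $\calf(H'';Y,X)$ term survives. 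In both cases $\vep_{n-1}$---which enters only through $\xi_n,\xi_{n-1}$ in Case~(1) or through $\zeta_{n-1}$ in Case~(2)---drops out of the final expression, proving the last assertion.
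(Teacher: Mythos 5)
Your proposal is correct and follows essentially the same route as the paper: expand $\calf(H;Y,X)$ twice via the recursion of Definition~\ref{def.4.4}, use (N2)/(N5) to get $\vep_{n-2}=\vep_n$ (resp.\ $\vep_{n-2}=0$), invoke the functional equation of Proposition~\ref{prop.4.1} to merge the $\calf(H'';Y,X^{\pm 1})$ terms, and finish by direct computation. The only difference is that you carry out explicitly the Laurent-polynomial identity and the cancellation that the paper dismisses as ``a direct calculation,'' and your computations check out.
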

\begin{proof}
We note that $(H')'=H''$, and hence $H''$ is a naive EGK datum of length $n-2$.
Assume that $n$ is odd and $a_1+\cdots +a_{n-1}$ is even. Hence, we have
\begin{align*}
\calf(H;Y,X) &=C_n(\frke_n,\frke_{n-1},\vep_{n-1};Y,X)\\
& \times \Biggl \{C_{n-1}(\frke_{n-1},\frke_{n-2},\vep_{n-1};Y,YX)\calf(H'';Y,Y^2X) \\
& +C_{n-1}(\frke_{n-1},\frke_{n-2},\vep_{n-1};Y,(YX)^{-1})\calf(H'';Y,X^{-1})\Biggr \}\\
&+\vep_n C_n(\frke_n,\frke_{n-1},\vep_{n-1};Y,X^{-1})\\
&\times \Biggl \{C_{n-1}(\frke_{n-1},\frke_{n-2},\vep_{n-1};Y,YX^{-1})\calf(H'';Y,Y^2X^{-1})\\
& +C_{n-1}(\frke_{n-1},\frke_{n-2},\vep_{n-1};Y,(YX^{-1})^{-1})\calf(H'';Y,X)\Biggr \},
\end{align*}
By definition, we have $\vep_{n-2}=\vep_n$, and by Proposition \ref{prop.4.1}, we have
\[\calf(H'',Y,X^{-1})=\vep_n\calf(H'';Y,X).\]
Then the assertion (1) can be proved by a direct calculation.
Similarly the assertion (2) can be proved.
\end{proof}

\bigskip
Now we recall the definition of EGK datum (cf. \cite{Ike-Kat}).

\begin{definition} 
\label{def.4.5}
 Let $G=(n_1,\ldots,n_r;m_1,\ldots,m_r;\zeta_1,\ldots,\zeta_r)$ be an element 
of $\ZZ_{>0}^r \times \ZZ_{\ge 0}^r  \times 
{\mathcal Z}_3^r$. Put $n_s^\ast=\sum_{i=1}^s n_i$ for $s \le r$.  We say that $G$ is an EGK datum of length $n$  if the following conditions hold:
\begin{itemize}
\item [(E1)] $n_r^\ast=n$  and $m_1 <\cdots <m_r$.
\item[(E2)] Assume that $n_s^\ast$ is even. Then $\zeta_s \not=0$ if and only if $m_1n_1+\cdots+m_sn_s$ is even.
\item [(E3)] Assume that $n_s^\ast$ is odd. Then $\zeta_s \not=0$. Moreover we have
\begin{itemize}
\item[(a)] Assume that  $n_i^\ast$  is even for any $i<s$.  Then
\[\zeta_s=\zeta_{s-1}^{m_s+m_{s-1}}\cdots\zeta_2^{m_2+m_1} \zeta_1^{m_2+m_1}.\]
In particular, $\zeta_1=1$ if $n_1$ is odd.
\item[(b)] Assume that $n_1m_1+\cdots+(n_{s-1}-1)m_{s-1} $ is even and that $n_i^\ast$ is odd for some $i<s$. Let $t<s$ be the largest number such that $n_t^\ast$ is odd. Then
\[\zeta_s=\zeta_{s-1}^{m_s+m_{s-1}}\cdots \zeta_{t+2}^{m_{t+3}+m_{t+2}} \zeta_{t+1}^{m_{t+2}+m_{t+1}}  \zeta_t.\]
In particular, $\zeta_s=\zeta_t$ if $t=s-1$.
\end{itemize}
\end{itemize}
We denote by  $\mathcal{EGK}_n$ the set of all $\EGK$ data of length $n$.
\end{definition}                                        
\bigskip

\begin{definition}
\label{def.4.6.}
Let $H=(a_1,\ldots,a_n;\vep_1,\ldots,\vep_n)$ be an naive $\EGK$ datum of length $n$, and
$n_1,\ldots,n_r,n_1^\ast,\ldots,n_r^\ast$ and $m_1,\ldots,m_r$ be those defined in Definition \ref{def.3.1}. 
Then put $\zeta_s=\vep_{{n_s}^\ast}$ for $s=1,\ldots,r$. 
Then $G_H:=(n_1,\ldots,n_r;m_1,\ldots,m_r;\zeta_1,\ldots,\zeta_r)$ is an $\EGK$ datum (cf. [\cite{Ike-Kat}, Proposition 6.2]). We then define a mapping $\Ups_n$ from $\mathcal{NEGK}_n$ to $\mathcal{EGK}_n$ by $\Ups_n(H)=G_H$.
\end{definition}
The  mapping $\Ups_n$ is surjective (cf. [\cite{Ike-Kat}, Proposition 6.3]), but it is  not injective in general.

\begin{proposition} 
\label{prop.4.4}
Let $G\in\mathcal{EGK}_n$ be an $\EGK$ datum.
Choose $H=(a_1,\ldots, a_n;\vep_1,\ldots,\vep_n)\in\mathcal{NEGK}_n$ such that $\Ups_n(H)=G$ and put $H'=(a_1,\ldots,a_{n-1};\vep_1,\ldots,\vep_{n-1})$ and  $G'=\Ups_{n-1}(H')$.
Then $G'$ is uniquely determined by $G$ except in the following two cases:
\begin{itemize}
\item[(Case 1) \hskip -15pt] \hskip 15pt $n\geq 3$ is odd, $a_{n-1}=a_n$, and $a_1+\cdots+a_{n-1}$ is even.
\item[(Case 2) \hskip -15pt] \hskip 15pt $n$ is even, $a_{n-1}=a_n$, and $a_1+\cdots+a_n$ is odd.
\end{itemize}
Moreover, in these exceptional cases, put $H''=(a_1,\ldots,a_{n-2};\vep_1,\ldots,\vep_{n-2})$ and  $G''=\Ups_{n-2}(H'')$.
Then $G''$ is uniquely determined by $G$.
\end{proposition}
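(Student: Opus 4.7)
The plan exploits that any $H=(a_1,\ldots,a_n;\varepsilon_1,\ldots,\varepsilon_n)\in\Upsilon_n^{-1}(G)$ has the tuple $(a_1,\ldots,a_n)$ forced by $G$ (it consists of $n_s$ copies of $m_s$ for $s=1,\ldots,r$) and the anchor signs $\varepsilon_{n_s^\ast}=\zeta_s$ forced as well. Thus the uniqueness of $G'$ given $G$ reduces to uniqueness of at most one new entry across all preimages. If $n_r=1$, i.e.\ $a_n>a_{n-1}$, then $G'$ is manifestly the truncation of $G$ obtained by dropping the last block $(n_r,m_r,\zeta_r)$ and is thus determined. If $n_r\ge 2$, i.e.\ $a_{n-1}=a_n$, then $G'$ has the same block structure as $G$ except that the last multiplicity is $n_r-1$, and its terminal sign is $\varepsilon_{n-1}$; the question becomes whether $\varepsilon_{n-1}$ is determined by the axioms (N1)--(N5) together with the anchors $\zeta_s$.

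I would then analyse $\varepsilon_{n-1}$ under the assumption $a_{n-1}=a_n$, dividing by the parity of $n$. If $n$ is odd, then $n-1$ is even, so (N2) gives $\varepsilon_{n-1}=0$ whenever $a_1+\cdots+a_{n-1}$ is odd, determining the value. When $a_1+\cdots+a_{n-1}$ is even, applying (N5) at $i=n\ge 3$ yields $\varepsilon_n=\varepsilon_{n-1}^{a_n+a_{n-1}}\varepsilon_{n-2}$; since $a_{n-1}=a_n$, the exponent $2a_n$ is even, so $\varepsilon_{n-1}^{a_n+a_{n-1}}=1$ and the identity collapses to $\varepsilon_n=\varepsilon_{n-2}$, imposing no condition on $\varepsilon_{n-1}$: this is exactly Case 1. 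If $n$ is even, then $n-1$ is odd and (N3) forces $\varepsilon_{n-1}\ne 0$; the case $n=2$ is handled by (N4). For $n\ge 4$, the parities of $a_1+\cdots+a_n$ and $a_1+\cdots+a_{n-2}$ agree (they differ by $2a_n$); when both are odd, we are in Case 2 and the hypothesis of (N5) at $i=n-1$ fails, so no axiom involves $\varepsilon_{n-1}$, which is free. In the remaining subcase, (N5) at $i=n-1$ expresses $\varepsilon_{n-1}=\varepsilon_{n-2}^{a_{n-1}+a_{n-2}}\varepsilon_{n-3}$, and iterating (N5) and (N2) back toward the anchors $n_s^\ast$ shows this combination to be a function of the $\zeta_s$ and $m_s$ alone. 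To confirm that $\varepsilon_{n-1}$ is genuinely not forced in Cases 1 and 2, I would exhibit two preimages differing only in $\varepsilon_{n-1}$: since $n-1\notin\{n_1^\ast,\ldots,n_r^\ast\}$ this does not alter any $\zeta_s$, and the only axiom that could involve $\varepsilon_{n-1}$, namely (N5) at $i=n$ in Case 1, has an even exponent so is insensitive to the flip (in Case 2 no axiom involves it at all).

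It remains to argue that $G''$ is determined in the two exceptional cases. In Case 1, the relation $\varepsilon_n=\varepsilon_{n-2}$ derived above gives $\varepsilon_{n-2}=\zeta_r$; in Case 2, axiom (N2) at the even index $n-2$ with $a_1+\cdots+a_{n-2}$ odd gives $\varepsilon_{n-2}=0$. Hence $\varepsilon_{n-2}$ is uniquely determined in both exceptional cases. The block structure of $H''$ is either $(n_1,\ldots,n_{r-1})$ when $n_r=2$, in which case $G''$ is the truncation of $G$ to its first $r-1$ blocks, or $(n_1,\ldots,n_{r-1},n_r-2)$ when $n_r\ge 3$, in which case the terminal sign of $G''$ is the $\varepsilon_{n-2}$ just computed. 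Either way $G''$ is uniquely determined by $G$.

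The principal technical obstacle lies in the $n$-even non-exceptional subcase above: one must certify that although individual intermediate signs $\varepsilon_{n-2},\varepsilon_{n-3},\ldots$ need not each be forced across all preimages $H$, the specific combination $\varepsilon_{n-2}^{a_{n-1}+a_{n-2}}\varepsilon_{n-3}$ is. The underlying mechanism is that whenever the exponent $a_i+a_{i-1}$ in (N5) is even the recursion becomes inert, collapsing to $\varepsilon_i=\varepsilon_{i-2}$ and bypassing any free sign at position $i-1$, while when the exponent is odd the sign is simply transmitted; combining these two behaviours with the anchor values $\zeta_s$ (subject to the consistency conditions (E2) and (E3) that $G$ already satisfies as an EGK datum) pins down $\varepsilon_{n-1}$ canonically.
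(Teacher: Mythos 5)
Your proposal is correct and follows essentially the same route as the paper: reduce everything to whether $\vep_{n-1}$ is common to all preimages, dispose of $a_{n-1}<a_n$ trivially, use (N2) when $n$ is odd and $a_1+\cdots+a_{n-1}$ is odd, isolate (Case 1) and (Case 2) as exactly the situations where no axiom constrains $\vep_{n-1}$, and in the remaining even-$n$ case determine $\vep_{n-1}$ from (N5) and the anchors $\zeta_s$ — your recursion (even exponent collapses to $\vep_i=\vep_{i-2}$, odd exponent forces $i-1$ to be a block end, hence anchored) is exactly what underlies the closed formula the paper quotes ``by (E3) and (N5)''. Your additional exhibition of two preimages in the exceptional cases and your explicit treatment of $G''$ (via $\vep_{n-2}=\vep_n=\zeta_r$ in Case 1 and $\vep_{n-2}=0$ by (N2) in Case 2) go slightly beyond the paper's ``the latter part can be proved in a similar way'', but are consistent with it.
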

\begin{proof}
For the first part, it is enough to prove $\vep_{n-1}$ is uniquely determined by $G$ except in  (Case 1) and (Case 2).
If $a_{n-1}<a_n$, then the assertion is obvious.
Assume that $a_{n-1}=a_n$.
If both $n\geq 3$ and $a_1+\cdots+a_{n-1}$ are odd, then we have $\vep_{n-1}=0$ by (N2).
Assume that both $n$ and $a_1+\cdots+a_n$ are even.
Write $G=(n_1^\ast,\ldots,n_r^\ast;m_1,\ldots,m_r;\zeta_1,\ldots,\zeta_r)$.
Then we have
\[
\vep_{n-1}=
\begin{cases}
\zeta_{r-1} \ & \text{ if $n_r$ is  odd,} \\
\zeta_{r-1}^{m_i+m_{r-1}}\cdots \zeta_1^{m_2+m_1} \ & \text{ if $n_1,\ldots,n_r$ are  even,}\\
\zeta_{r-1}^{m_r+m_{r-1}}\cdots  \zeta_{k+1}^{m_{k+2}+m_{k+1}}\zeta_k \
 & \text{ if $n_r$ is even and}  
\\
{ } &  \  n_{k+1} \text{ is  odd  with  some $k \le  r -2$.} 
\end{cases}
\]
by (E3) and (N5).
The latter part can be proved in a similar way.
\end{proof}

\begin{theorem} 
\label{th.4.1}
Let $G$ be an $\EGK$ datum of length $n$, take
$H\in \Ups_n^{-1}(G)$. Then the Laurent polynomial  $\calf(H;Y,X)$ in $X^{1/2}, Y$ is uniquely determined by
 $G$, and does not depend on the choice of $H$. 
\end{theorem}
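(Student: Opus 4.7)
The plan is to argue by induction on $n$, using the recursive definition of $\calf(H;Y,X)$ together with Proposition \ref{prop.4.4} in the generic cases and Proposition \ref{prop.4.3} in the two exceptional cases.

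For the base case $n=1$, a naive EGK datum is simply $(a_1; \vep_1)$ with $\vep_1 = 1$ forced by (N4), and $\Ups_1$ is a bijection onto $\mathcal{EGK}_1$, so there is nothing to prove. Assume the theorem holds for lengths less than $n$, and fix $G \in \mathcal{EGK}_n$ together with an arbitrary preimage $H = (a_1, \ldots, a_n; \vep_1, \ldots, \vep_n) \in \Ups_n^{-1}(G)$. The sequence $(a_1, \ldots, a_n)$ and the exponents $\frke_0, \ldots, \frke_n$ depend only on $G$, since they are read off from the multiplicities $n_i$ and levels $m_i$.

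First I would treat the non-exceptional case, namely the situation outside (Case 1) and (Case 2) of Proposition \ref{prop.4.4}. In this case Proposition \ref{prop.4.4} ensures that both $\vep_{n-1}$ and $\vep_n$ are determined by $G$ (the latter being either forced to $0$ by (N2), or equal to one of the $\zeta_s$). Consequently the scalars $\xi$, $\zeta$ appearing in Definition \ref{def.4.4} are functions of $G$, so $C_n(\frke_n, \frke_{n-1}, \xi; Y, X^{\pm 1})$ depends only on $G$. Moreover the truncation $H' = (a_1, \ldots, a_{n-1}; \vep_1, \ldots, \vep_{n-1})$ has $\Ups_{n-1}(H') = G'$ uniquely determined by $G$. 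By the inductive hypothesis, $\calf(H'; Y, \cdot)$ depends only on $G'$, and hence the recursion
\[
\calf(H; Y, X) = C_n(\frke_n, \frke_{n-1}, \xi; Y, X)\calf(H'; Y, YX) + \zeta\, C_n(\frke_n, \frke_{n-1}, \xi; Y, X^{-1})\calf(H'; Y, YX^{-1})
\]
produces a Laurent polynomial depending only on $G$.

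The harder case, which is really the crux, consists of (Case 1) and (Case 2) of Proposition \ref{prop.4.4}: here $\vep_{n-1}$ is genuinely not determined by $G$, so the previous argument breaks down. The key point is Proposition \ref{prop.4.3}, which in both exceptional cases expresses $\calf(H; Y, X)$ as an explicit combination of $\calf(H''; Y, Y^{\pm 2} X^{\pm 1})$ (and, in Case 1, also $\calf(H''; Y, X)$), with coefficients that are rational functions in $Y$, $X$, $\vep_n$, $\frke_{n-2}$, $\frke_{n-1}$, $\frke_n$ alone and manifestly independent of $\vep_{n-1}$. By Proposition \ref{prop.4.4} the truncated datum $G'' = \Ups_{n-2}(H'')$ is uniquely determined by $G$, so by the inductive hypothesis $\calf(H''; Y, \cdot)$ depends only on $G$; the prefactors also depend only on $G$ (since $\vep_n$ is recovered from $\zeta_r$ when $n = n_r^\ast$). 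Combining these, $\calf(H; Y, X)$ is determined by $G$ in the exceptional cases as well.

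The main obstacle, as indicated, is exactly this last step: one must have the explicit two-step recursion of Proposition \ref{prop.4.3} to bypass the indeterminacy of $\vep_{n-1}$, and one must verify that in both exceptional cases the combinatorial conditions (E2), (E3) together with (N2), (N5) force $G''$ to be uniquely determined by $G$. Both of these are precisely what Propositions \ref{prop.4.3} and \ref{prop.4.4} provide, so once those are invoked the induction closes cleanly.
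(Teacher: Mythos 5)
Your proposal is correct and follows essentially the same route as the paper: induction on $n$, using the recursion of Definition \ref{def.4.4} together with Proposition \ref{prop.4.4} in the generic case, and the two-step recursion of Proposition \ref{prop.4.3} (whose coefficients are independent of $\vep_{n-1}$) together with the uniqueness of $G''$ in the exceptional cases. This matches the paper's argument in structure and in all essential details.
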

 
\begin{proof} We prove the assertion by the induction on $n$. The assertion holds for $n=1$. 
Let $n >1$ and assume that the assertion holds for any $l<n$.  
Write
\[G=(n_1,\ldots,n_r;m_1,\ldots,m_r;\zeta_1,\ldots,\zeta_r), \quad
H=(a_1,\ldots, a_n;\vep_1,\ldots,\vep_n).
\]
We note that $(a_1,\ldots, a_n)$ is uniquely determined by $G$, and $\vep_n=\zeta_r$. 
For a positive integer $i \le n$, let $\frke_i=\frke_i(\widetilde m)$ be that defined in Definition \ref{def.4.3}. 
Then except in (Case 1) and (Case 2) in Proposition \ref{prop.4.4}, we have
\begin{align*}
\calf(H;Y,X)=&C_n(\frke_n,\frke_{n-1},\xi;Y,X)\calf(H';Y,YX)\\
&\quad +\zeta C_n(\frke_n,\frke_{n-1},\xi;Y,X^{-1})\calf(H';Y,YX^{-1}) 
\end{align*}
where $H'$ is as in Proposition \ref{prop.4.4} and 
\begin{align*}
\zeta=&
\begin{cases}
1 & \text{ if $n$ is even} \\
\vep_n & \text{ if $n$ is odd,}
\end{cases} 
\\\
\xi=&
\begin{cases}
\vep_{n-1}  & \text{ if $n$ is odd} \\
\vep_n & \text{ if $n$ is even.}
\end{cases} 
\end{align*}
By the induction assumption, $\calf(H',Y,X)$ is uniquely determined by $G'=\Ups_{n-1}(H')$, and hence $\calf(H;Y,X)$ is uniquely determined by $G$ by Proposition \ref{prop.4.4}.

Next, we consider (Case 1), i.e.,  $n\geq 3$ is odd, $a_{n-1}=a_n$,  and $a_1+\cdots a_{n-1}$ is even. 
Then, by (2) of Proposition \ref{prop.4.3}, we have
\begin{align*}
\calf(H;Y,X)&=Y^{\frke_{n-2}-1} \Biggl \{ {X^{(-\frke_n+\frke_{n-2})/2 -1}  \over (YX)^{-1}-YX}\calf(H'',Y^2X)\\
&+{X^{(\frke_n-\frke_{n-2})/2 +1}  \over (YX^{-1})^{-1}-YX^{-1}}\calf(H'';Y,Y^2X^{-1})\Biggr \}\\
&+ {Y^{\frke_{n-1}}(Y^2-Y^{-2})  \vep_r \over ((YX)^{-1}-YX)((YX^{-1})^{-1}-YX^{-1})}\calf(H'';Y,X).
\end{align*}
By the induction assumption, $\calf(H'',Y,X)$ is uniquely determined by $G''=\Ups_{n-2}(H'')$, and hence $\calf(H;Y,X)$ is uniquely determined by $G$ by Proposition \ref{prop.4.4}.

Now we consider (Case 2), i.e., $n$ is even, $a_{n-1}=a_n$, and $a_1+\cdots +a_n$ is odd.  
Then,  by (1) of Proposition \ref{prop.4.3}, we have
\begin{align*}
\calf(H;Y,X)&=Y^{\frke_{n-2}} \Biggl \{{X^{(-\frke_n+\frke_{n-2})/2-1} \over X^{-1} -X}\calf(H'';Y,Y^2X)\\
&+{X^{(\frke_n-\frke_{n-2})/2+1} \over X -X^{-1}}\calf(H'';Y,Y^2X^{-1}) \Biggr \}.
\end{align*}
By the induction assumption, $\calf(H'',Y,X)$ is uniquely determined by $G''=\Ups_{n-2}(H'')$, and hence $\calf(H;Y,X)$ is uniquely determined by $G$ by Proposition \ref{prop.4.4}.
\end{proof}

For an EGK datum $G$ we define $\widetilde \calf(G;Y,X)$ as $\calf(H;Y,X)$, where $H$ is a naive $\EGK$ datum of length $n$ such that $\Ups_n(H)=G$. For later purpose we recall the following theorem. 

\begin{theorem} 
\label{th.4.2}
{\rm (cf. [\cite{Ike-Kat}, Theorem 6.1])} Let  $B \in \calh_n(\frko )^{\rm{nd}}$. Then $\EGK(B)$ is an $\EGK$ datum of length $n$.
\end{theorem}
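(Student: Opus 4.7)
The plan is to reduce Theorem \ref{th.4.2} to the naive EGK framework developed in Section \ref{sec:1} and Section 4. By Theorem \ref{th.3.2} I may replace $B$ by a $GL_n(\frko)$-equivalent optimal form $C$; then $\GK(C)=\GK(B)=(a_1,\ldots,a_n)$ organised into the grouped form of Definition \ref{def.3.1}. For each $i=1,\ldots,n$ I attach the invariant
\[
\vep_i=
\begin{cases}
\xi_{C^{(i)}} & \text{if $i$ is even},\\
\eta_{C^{(i)}} & \text{if $i$ is odd},
\end{cases}
\]
and form $H(C)=(a_1,\ldots,a_n;\vep_1,\ldots,\vep_n)$. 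The intermediate goal is to show $H(C)\in\mathcal{NEGK}_n$; once this is done, Definition \ref{def.4.6.} yields $\Ups_n(H(C))=\EGK(B)$, and the known fact that $\Ups_n$ sends naive EGK data to EGK data (a formal consequence of the axioms, cf.~[\cite{Ike-Kat}, Proposition 6.2]) completes the proof.

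First I would dispose of the easy axioms of Definition \ref{def.4.1}. Condition (N1) is the definition of the GK invariant. For (N4), the Clifford invariant of a non-degenerate rank-one form is trivial, so $\vep_1=1$. For (N3), $\eta_{C^{(i)}}\in\{\pm 1\}$ by construction. For (N2), $\xi_{C^{(i)}}=0$ precisely when $F(\sqrt{D_{C^{(i)}}})/F$ is ramified quadratic; optimality of $C$ controls $\ord D_{C^{(i)}}$ in terms of $a_1,\ldots,a_i$, and the parity of $a_1+\cdots+a_i$ therefore decides whether the extension is unramified (or trivial) or ramified, matching the condition in (N2).

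The main obstacle is the three-term relation (N5): $\vep_i=\vep_{i-1}^{a_i+a_{i-1}}\vep_{i-2}$ for odd $i\geq 3$ with $a_1+\cdots+a_{i-1}$ even. I would attack it by writing each $\eta_{C^{(j)}}$ via the formula in Section 2, in terms of $\langle\cdot,\cdot\rangle$, $\det C^{(j)}$ and $\varepsilon_{C^{(j)}}$, and by exploiting the multiplicativity of the Hasse--Witt invariant for an orthogonal decomposition of $C^{(i)}$ over $F$ into $C^{(i-2)}$ plus a rank-two piece whose discriminant is determined by $a_{i-1},a_i$ and the $\xi$-invariants. The two Hilbert-symbol factors arising from the rank-two complement combine to produce the exponent $a_i+a_{i-1}$ on $\vep_{i-1}$. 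The independence of $\vep_{n_s^\ast}=\zeta_s(C)$ from the particular optimal $C$ chosen, which is needed to pass from $H(C)$ to $\EGK(B)$, is exactly [\cite{Ike-Kat}, Theorem 0.4] recalled in Definition \ref{def.3.2}. Axioms (E1)--(E3) of Definition \ref{def.4.5} then follow from (N1)--(N5) under $\Ups_n$ by a direct check at each grouped index $n_s^\ast$: (E1) is immediate, (E2) is (N2) restricted to the jump positions, and the two subcases of (E3) are obtained by iterating (N5) across the flat stretches of the sequence $(a_1,\ldots,a_n)$ between consecutive jumps.
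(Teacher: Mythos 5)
The first thing to note is that the paper does not actually prove this statement: Theorem \ref{th.4.2} is quoted from [\cite{Ike-Kat}, Theorem 6.1], so any self-contained argument is by necessity a different route. Your route, however, has a genuine gap at its central construction. You define $\vep_i$ for \emph{every} index $i$ as $\xi_{C^{(i)}}$ or $\eta_{C^{(i)}}$, where $C$ is an arbitrary optimal form equivalent to $B$. In the dyadic case the leading submatrix $C^{(i)}$ can be degenerate when $i$ lies in the interior of a block $I_s$: for instance $C=\smallmattwo(0;1/2;1/2;0)$ is optimal (indeed reduced) with $\GK(C)=(0,0)$, and $C^{(1)}=(0)$, so $\eta_{C^{(1)}}$ is not even defined. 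Even when $C^{(i)}$ happens to be non-degenerate, for interior $i$ one does not know $\GK(C^{(i)})=(a_1,\ldots,a_i)$, and therefore the two facts your verification rests on --- that ``optimality of $C$ controls $\ord D_{C^{(i)}}$ in terms of $a_1,\ldots,a_i$'' (needed for (N2)), and that the valuation parities of the rank-two complements produce the exponent $a_i+a_{i-1}$ in (N5) --- are simply not available. These facts hold for the full matrix by [\cite{Ike-Kat}, Theorem 0.1], but transporting them to truncations is exactly the hard content of the dyadic theory; it is the reason the paper defines $\EGK(B)$ only through the block-boundary invariants $\zeta_s=\zeta(C^{(n_s^\ast)})$, whose existence and independence of $C$ is itself a nontrivial theorem ([\cite{Ike-Kat}, Theorem 0.4]).

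Your plan does go through when $C$ can be chosen to be a diagonal Jordan form, i.e.\ in the non-dyadic case (this is essentially the computation behind Section 6 and \cite{Kat1}), but the dyadic case is the whole point of the theorem. To repair the argument you would have to produce, for every $B$, some optimal $C$ all of whose truncations $C^{(i)}$ are non-degenerate, have the expected GK invariants, and whose invariants satisfy (N2)--(N5); that existence statement is essentially the surjectivity/compatibility results ([\cite{Ike-Kat}, Proposition 6.3], and Proposition \ref{prop.4.5} here for reduced forms), which in \cite{Ike-Kat} are established alongside or after Theorem 6.1, so one must also be careful not to argue circularly. As it stands, the reduction of Theorem \ref{th.4.2} to [\cite{Ike-Kat}, Proposition 6.2] via your $H(C)$ is not justified.
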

The following proposition will be used later.
\begin{proposition}
\label{prop.4.5} Assume that $q$ is even. 
Let $B\in\calh_n(\frko)$ be a reduced form of $\GK$ type $(\ua,\sigma)$, where $\ua=(a_1, \ldots, a_n)$.
\begin{itemize}
\item [(1)] Assume that $B^{(n-1)}$ is a reduced form with $GK(B^{(n-1)})=\ua^{(n-1)}=(a_1, \ldots, a_{n-1})$.
Then there exists $H=(\underline{a};\vep_1, \ldots, \vep_n)\in\mathcal{NEGK}_n$ such that $\Ups_n(H)=\EGK(B)$ and $\Ups_{n-1}(H')=\EGK(B^{(n-1)})$.
\item [(2)] Assume that $\GK(B)$ satisfies the condition either in ${\rm (Case 1)}$ or ${\rm (Case2)}$ of Proposition \ref{prop.4.4} and $\sigma(a_{n-1})=a_n$. Then $B^{(n-2)}$ is a reduced form with $\GK(B^{(n-2)})=\ua^{(n-2)}=(a_1, \ldots, a_{n-2})$, and there exists $H=(\underline{a};\vep_1, \ldots, \vep_n)\in\mathcal{NEGK}_n$ such that $\Ups_n(H)=\EGK(B)$ and $\Ups_{n-2}(H'')=\EGK(B^{(n-2)})$.
\end{itemize}
\end{proposition}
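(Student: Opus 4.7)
The plan is to exhibit a naive EGK datum $H = (\underline{a}; \epsilon_1, \ldots, \epsilon_n)$ by prescribing each $\epsilon_i$ so that (i) the block-boundary values match $\EGK(B)$, giving $\Ups_n(H) = \EGK(B)$, and (ii) the relevant truncation $H'$ or $H''$ has block-boundary values matching $\EGK(B^{(n-1)})$ or $\EGK(B^{(n-2)})$. Writing $\underline{a} = (\underbrace{m_1, \ldots, m_1}_{n_1}, \ldots, \underbrace{m_r, \ldots, m_r}_{n_r})$ as in Definition \ref{def.3.1}, I would first set $\epsilon_{n_s^\ast} = \zeta_s(B)$ for $s = 1, \ldots, r$ (Definition \ref{def.3.2}). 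For indices $i$ lying in the interior of a block, axiom (N2) forces $\epsilon_i = 0$ whenever $i$ is even and $a_1 + \cdots + a_i$ is odd, and (N5) determines $\epsilon_i$ recursively whenever $i$ is odd with even partial sum. The interior even indices with even partial sum admit a free choice; this freedom is what makes part (2) possible.

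For part (1), I would distinguish the case $n_r \geq 2$ (so $n-1$ lies in the interior of the last block of $\underline{a}$ but is the final block boundary of $\underline{a}^{(n-1)}$) from the case $n_r = 1$ (where $n-1 = n_{r-1}^\ast$ is simultaneously a block boundary of $\underline{a}$ and of $\underline{a}^{(n-1)}$). In either case, since $(B^{(n-1)})^{(n_s^\ast)} = B^{(n_s^\ast)}$ for every $s$ with $n_s^\ast \leq n-1$, the earlier $\zeta$ values of $\EGK(B^{(n-1)})$ coincide with those of $\EGK(B)$, and only the last coordinate can differ. Setting $\epsilon_{n-1}$ equal to that last coordinate of $\EGK(B^{(n-1)})$ produces the required compatibility. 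The axioms (N1)--(N5) at position $n-1$ then translate into axioms (E1)--(E3) for $\EGK(B^{(n-1)})$, which hold by Theorem \ref{th.4.2} applied to $B^{(n-1)}$.

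For part (2), the pairing $\sigma(n-1) = n$ together with the reduced-form conditions of Definition \ref{def.3.5} guarantees that removing rows and columns $n-1$ and $n$ from $B$ leaves a matrix $B^{(n-2)}$ whose restricted involution $\sigma|_{\{1, \ldots, n-2\}}$ is $\underline{a}^{(n-2)}$-admissible and satisfies Definition \ref{def.3.5} (the entries that were cut off were precisely the $\sigma$-paired off-diagonal entry and, in Case 2, the associated diagonal, leaving no dangling indices). Hence $B^{(n-2)}$ is reduced with $\GK(B^{(n-2)}) = \underline{a}^{(n-2)}$ by Theorem \ref{th.3.1}. The hypothesis that we are in Case 1 or Case 2 of Proposition \ref{prop.4.4} is exactly the hypothesis that $\epsilon_{n-1}$ is \emph{not} determined by $G = \EGK(B)$, so the free choice mentioned above is available; setting $\epsilon_{n-1}$ equal to the last $\zeta$ of $\EGK(B^{(n-2)})$ (promoted to the correct index via (N5) when $n-1$ is odd) gives a naive EGK datum $H$ whose two-step truncation $H''$ satisfies $\Ups_{n-2}(H'') = \EGK(B^{(n-2)})$.

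The main obstacle is the combinatorial verification that the prescribed $\epsilon_i$ actually satisfy (N1)--(N5) simultaneously at both ends of the chain. This reduces to checking coherence between (E1)--(E3) applied to $\EGK(B)$, $\EGK(B^{(n-1)})$, and $\EGK(B^{(n-2)})$, which is automatic by Theorem \ref{th.4.2}, together with the stability in Theorem \ref{th.3.3}, which ensures that the invariants $\zeta_s$ are well-defined and compatible across nested reduced forms sharing the initial block structure.
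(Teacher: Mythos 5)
Your overall strategy (use the freedom in the ambiguous coordinate of a naive EGK datum and the coincidence of the corner matrices $(B^{(n-1)})^{(n_s^\ast)}=B^{(n_s^\ast)}$) is the same general shape as the paper's argument, and for part (1) your sketch can be completed along the paper's lines: when $\vep_{n-1}$ is forced one checks, case by case, that the forced value equals the last coordinate of $\EGK(B^{(n-1)})$ (via [Ike-Kat, Theorem 0.4] when $a_{n-1}<a_n$, via $\xi_{B^{(n-1)}}=0$ in the forced-zero case, and via the explicit formula of Proposition \ref{prop.4.4} together with (E3) for $\EGK(B^{(n-1)})$ in the forced-nonzero case), and in (Case 1)/(Case 2) one uses the free choice. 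You gesture at this but do not carry out the forced-case verification, and your claim that it is ``automatic by Theorem \ref{th.4.2} together with Theorem \ref{th.3.3}'' is not right: Theorem \ref{th.4.2} applies to $B$, $B^{(n-1)}$, $B^{(n-2)}$ separately and gives no relation between their EGK data, and Theorem \ref{th.3.3} concerns perturbations $B'-B\in\calm^0(\ua)$ of the full matrix, not nested upper-left corners, so it contributes nothing here.

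The genuine gap is in part (2). The coordinate that must be controlled is not $\vep_{n-1}$ but $\vep_{n-2}$ and $\vep_n$: one needs $\vep_{n-2}$ to equal the last entry of $\EGK(B^{(n-2)})$ and $\vep_n=\zeta_r(B)$, while in (Case 1) axiom (N5) at $i=n$ forces $\vep_n=\vep_{n-2}$ (since $a_{n-1}+a_n$ is even). Hence the existence of the desired $H$ is equivalent, in (Case 1), to the arithmetic identity $\eta_B=\eta_{B^{(n-2)}}$ for the reduced form $B$ with $\sigma(n-1)=n$; this is exactly what the paper imports from [Ike-Kat, Lemma 6.2 (2)] after extending a lift $H''$ of $\EGK(B^{(n-2)})$ upward by choosing $\vep_{n-1}=\pm1$ arbitrarily and $\vep_n=\vep_{n-2}$. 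This identity is not a formal consequence of (E1)--(E3) for the two data separately (in general neither (E3)(a) nor (E3)(b) determines the last $\zeta$), so your proposal is missing the key input; moreover your prescription ``set $\vep_{n-1}$ equal to the last $\zeta$ of $\EGK(B^{(n-2)})$, promoted via (N5) when $n-1$ is odd'' is misdirected, since in (Case 1) the index $n-1$ is even and the last $\zeta$ of $\EGK(B^{(n-2)})$ lives at index $n-2$. (In (Case 2) the matching is easy because (N2) and (E2) force $\vep_n=\xi_B=0$.) Finally, the claim that $B^{(n-2)}$ is reduced with $\GK(B^{(n-2)})=\ua^{(n-2)}$ is fine in substance (the paper uses [Ike-Kat, Lemma 4.1] for this), but it also should not be attributed to Theorem \ref{th.3.3}.
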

\begin{proof}
Put $\EGK(B)=(n_1^\ast,\ldots,n_r^\ast;m_1,\ldots,m_r;\zeta_1,\ldots,\zeta_r)$.\\
(1) The assertion follows from {\rm (cf. [\cite{Ike-Kat}, Theorem 0.4])} if $a_{n-1}=a_n$.
In the case that  $n\geq 3$ is odd, $a_{n-1}=a_n$, and  $a_1+\cdots+a_{n-1}$ is odd,  we have $\vep_{n-1}=\xi_{B^{(n-1)}}=0$.
Similarly, in the case that  $n$ is even, $a_{n-1}=a_n$, and $a_1+\cdots+a_n$ is even, we have 
\[
\vep_{n-1}=
\begin{cases}
\zeta_{r-1} \ & \text{ if $n_r$ is  odd,} \\
\zeta_{r-1}^{m_i+m_{r-1}}\cdots \zeta_1^{m_2+m_1} \ & \text{ if $n_1,\ldots,n_r$ are  even,}\\
\zeta_{r-1}^{m_r+m_{r-1}}\cdots  \zeta_{k+1}^{m_{k+2}+m_{k+1}}\zeta_k \
 & \text{ if $n_r$ is even and}  
\\
{ } &  \  n_{k+1} \text{ is  odd  with  some $k \le  r -2$.} 
\end{cases}
\]
by Proposition \ref{prop.4.4}.
By (E3), we have $\eta_{B^{(n-1)}}=\vep_{n-1}$.

In (Case 1) of Proposition \ref{prop.4.4}, choose any $H=(\underline{a};\vep_1, \ldots, \vep_n)\in\mathcal{NEGK}_n$ such that $\Ups_n(H)=\EGK(B)$.
Then we have
\[
(\underline{a};\vep_1, \ldots, \vep_{n-2}, 1, \vep_n), \, (\underline{a};\vep_1, \ldots, \vep_{n-2}, -1, \vep_n)\in\mathcal{NEGK}_n.
\]
Thus one can find $H=(\underline{a};\vep_1, \ldots, \vep_n)\in\mathcal{NEGK}_n$ such that $\Ups_n(H)=\EGK(B)$ and $\vep_{n-1}=\xi_{B^{(n-1)}}$.
Thus we have $\Ups_{n-1}(H')=\EGK(B^{(n-1)})$. Similarly, the assertion holds in (Case 2) of Proposition \ref{prop.4.4}.\\
(2) Let $H''=(a_1,\ldots,a_{n-2};\vep_1,\ldots,\vep_{n-2})$ be a naive EGK datum of length $n-2$ such that $\Ups_{n-2}(H'')=\EGK(B^{(n-2)})$. 
Assume that $\GK(B)$ satisfies the condition in (Case 1). Then put $\vep_n=\vep_{n-2}$, and take $\vep_{n-1}= \pm1$ arbitrary. Then $H:=(\ua;\vep_1,\ldots,\vep_n)$ is a naive EGK data. Moreover,
by [\cite{Ike-Kat}, Lemma 6.2 (2)], we have $\vep_n=\eta_{B}$, and hence we have $\Ups_n(H)=\EGK(B)$. This proves the assertion. 
Similarly the assertion holds in (Case 2).

\end{proof}

\section{Induction formulas of the local densities and Siegel series}
Let $dY$ be the Haar measure of ${\rm Sym}_n(F)$ normalized so that 
\[\int_{{\rm Sym}_n(\frko )}dY =1.\]
 For a measurable subset $C$ of ${\rm Sym}_n(F)$, we define the volume ${\rm vol}(C)$ of $C$ as
\[{\rm vol}(C)=\int_C dY.\]
We also normalize the Haar measure $dX$ of $M_{mn}(F)$ so that
\[\int_{M_{mn}(\frko )}dX =1.\]
Let $m,n$ and $r$ be non-negative integers such that $m \ge n-r >0$.
For an element $S \in \calh_m(\frko )^{\rm{nd}}, A \in \calh_r(\frko ), R \in  M_{r,n-r}(\frko )$, and $ T \in \calh_{n-r}(\frko )$, we define the modified local density with respect to $S,T,R$, and $A$ as 
\[\cala_e(S,T,R,A)=\{(X_1,X_2) \in M_{m,n-r}(\frko ) \times M_{r,n-r}(\frko ) \ | \]
\[ S[X_1]-\left(\begin{matrix} T & 2^{-1}{}^tR \\ 2^{-1}R & A \end{matrix}\right) \left[\begin{matrix} 1_{n-r} \\ X_2 \end{matrix}\right] \in \vpi^e\calh_{n-r}(\frko ) \},\]
and 
\[\alpha_{\frkp}(S,T,R,A)=\lim_{e \rightarrow \infty}q^{e(n-r)(n-r+1)/2}{\rm vol}(\cala_e(S,T,R,A)).\]
We make the convention that $\alpha_{\frkp}(S,T,R,A)=1$ if $n=r$. 
As for the existence of the above limit, see Lemma \ref{lem.5.1} and Theorem \ref{th.5.1}. 
The modified local density $\alpha_{\frkp}(S,T,R,A)$ can also be expressed as the following improper integral:
\[\alpha_{\frkp}(S,T,R,A)\]
\[=\lim_{e \rightarrow \infty} \int_{\frkp^{-e}{\rm Sym}_{n-r}(\frko )}\int_{M_{m,n-r}(\frko ) \times 
M_{r,n-r}(\frko )} \psi({\rm tr}(Y(F(X_1,X_2)))) dX_1dX_2dY,\]
where  
\[F(X_1,X_2)=S[X_1]-\left(\begin{matrix} T & 2^{-1}{}^tR \\ 2^{-1}R & A \end{matrix}\right) \left[\begin{matrix} 1_{n-r} \\ X_2 \end{matrix}\right].\]
We sometimes write the above improper integral as 
\[\int_{{\rm Sym}_{n-r}(F)}\int_{M_{m,n-r}(\frko ) \times 
M_{r,n-r}(\frko )} \psi({\rm tr}(Y(F(X_1,X_2)))) dX_1dX_2dY.\]
In the case $r=0$, we write $\alpha_{\frkp}(S,T,R,A)$ as $\alpha_{\frkp}(S,T)$, and call it the local density representing $T$ by $S$. We have
\[\alpha_{\frkp}(S,T)=\lim_{e \rightarrow \infty} q^{en(n+1)/2}{\rm vol}(\cala_e(S,T)),\]
where
\[\cala_e(S,T) = \{ X =(x_{ij}) \in M_{mn}(\frko ) \ | \ S[X]  - T \in \vpi^e\calh_n(\frko ) \}.\]
The local density can also be expressed as 
\[\alpha_{\frkp}(S,T)=\int_{{\rm Sym}_n(F)}\int_{M_{mn}(\frko )}\psi({\rm tr}(Y(S[X]-T)))dXdY.\]
An element $X$ of $M_{mn}(\frko )$ with $m \ge n$ is said to be primitive if there is
a matrix $Y \in M_{m,m-n}(\frko )$ such that $\left(\begin{matrix} X & Y \end{matrix}\right) \in GL_m(\frko )$.
We denote by $M_{mn}(\frko )^\mathrm{{prm}}$ the subset of $M_{mn}(\frko )$ consisting of all primitive matrices.
Let $m,n$ and $l$ be non-negative integers such that $m \ge n \ge l \ge 1$. 
For  $S \in \calh_m(\frko )^{\rm{nd}}$ and $ T \in \calh_n(\frko )$,
put 
\[\calb_e(S,T)^{(l)} = \{ X =(x_{ij}) \in \cala_e(S,T) \ |  \ (x_{ij})_{1 \le i \le m,n-l+1 \le j \le n} \text{ is  primitive} \},\]
and we define the modified primitive local density  
\[ \beta_{\frkp}(S,T)^{(l)} = \lim_{e \rightarrow \infty} q^{en(n+1)/2}{\rm vol}(\calb_e(S,T)^{(l)}).\]
In particular put  
\[\beta_{\frkp}(S,T)=\beta_{\frkp}(S,T)^{(n)},\]
and call it the primitive local density. We make the convention that $\calb_e(S,T)^{(0)}=\cala_e(S,T)$ and $\beta_{\frkp}(S,T)^{(0)}=\alpha_{\frkp}(S,T)$.  We can also express  $\beta_{\frkp}(S,T)^{(l)}$ as
\[
\beta_{\frkp}(S,T)^{(l)} =
\int_{\mathrm{Sym}_n(F)}
\int_{M_{m,n-l}(\frko )}
\int_{M^\mathrm{prm}_{m,l}(\frko )}
\psi(\mathrm{tr} (Y(S\left[\left(\begin{matrix} X_1 & X_2 \end{matrix}\right) \right]-T)))\, dX_2\,dX_1\, dY.
\]

\bigskip

Now let $H_k = \overbrace{H \bot \cdots \bot H}^k$ with 
$H =\left(\begin{matrix}
             0        & 1/2  \\
             1/2        & 0 
                                   \end{matrix}\right)$.
We note that 
\[b_{\frkp}(B,k)=\alpha_{\frkp}(H_k,B)\]
for any positive integer $k \ge n$. 

\bigskip
\begin{definition}
\label{def.5.1}
Let $T \in \calh_{n-r}(\frko )^{\rm{nd}}$.  For $X=(x_{ij}) \in M_{r,n-r}(\frko )$ and $A \in \calh_r(\frko ), R \in  M_{r,n-r}(\frko )$  we define a matrix $T(R,A,X)$ by 
\[T(R,A,X)=
\begin{pmatrix} T & {}^t\!R/2 \\  R/2 & A\end{pmatrix}\left[
\begin{pmatrix} 1_{n-r} \\ X \end{pmatrix} \right]=
T+ A[X]+ \frac{1}2 \left({}^t\! RX+ {}^t\!X R\right).\]
\end{definition}

\bigskip

For $B \in \calh_n(\frko)^{\rm nd}$, we denote by $\frkl_B$ the least integer such that $p^{\frkl_B}B^{-1} \in \calh_n(\frko)$.

\begin{lemma} 
\label{lem.5.1}
Let $S \in \calh_m(\frko)^{{\rm nd}}$. 
\begin{itemize}
\item[(1)]Let $n$ and $r$ be non-negative integers such that $m \ge n-r \ge 0$. Let $T \in \calh_{n-r}(\frko )$, and $A \in \calh_r(\frko ), R \in  M_{r,n-r}(\frko )$. 
Assume that there is a positive integer $l_0=l_0(T,R,A)$ depending only on $T,R,A$ such that 
\[\ord(\det (2T(R,A,X))) \le l_0(T,R,A)\]
for any $X \in M_{r,n-r}(\frko )$. Put $m_0=2l_0+1$. 
Then the limit $\alpha_\frkp(S,T,R,A)$  exists, and 
\[\alpha_\frkp(S,T,R,A)=q^{-er(n-r)}\sum_{X \in M_{r,n-r}(\frko )/\frkp^{e}M_{r,n-r}(\frko )} \alpha_{\frkp}(S,T(R,A,X))\]
for any $e \ge m_0$. Here we use the same symbol $X$ to denote the coset of $X \in M_{r,n-r}(\frko)$ modulo ${\frkp}^e$.
\item[(2)] Let $B \in \calh_n(\frko)$ with $m \ge n$.
\begin{itemize}
\item[(2.1)] Assume that $B$ is non-degenerate. Then for any $0 \le r \le n$ the limit $\beta_\frkp(S,B)^{(r)}$ exists and
\[\beta_\frkp(S,B)^{(r)}=q^{en(n+1)/2}{\rm vol}(\calb_{e}(S,B)^{(r)})\]
for any integer $e \ge 2\ord(\det (2B))+1$.
\item[(2.2)] Assume that $2S \in GL_m(\frko)$. Then $\beta_\frkp(S,B)$ exists and
\[\beta_\frkp(S,B)=q^{en(n+1)/2}{\rm vol}(\calb_{e}(S,B))\]
 for any $e \ge 1$.
\item[(2.3)] Assume that $B$ is non-degenerate. Then,  there is a positive integer $\lambda_{\frkl_B}$ depending on $\frkl_B$ satisfying the following condition:\\
If $A \in \calh_m(\frko)$ satisfies $A \equiv B \ {\rm mod} \ \vpi^{\lambda_{\frkl_B}} S_m(\frko)$, 
$A$ is $GL_m(\frko)$-equivalent to $B$.

In particular if $q$ is odd, we can take $\frkl_B+1$ as $\lambda_{\frkl_B}$.
 \end{itemize}
 \end{itemize}
\end{lemma}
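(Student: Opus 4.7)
The plan is to prove the four assertions in the order (2.3), (2.2), (2.1), (1), because (1) reduces to (2.1) by a direct splitting of the integral, the stabilization assertions in (2.1) and (2.2) follow from the same Hensel-type lifting, and (2.3) provides the openness of the $GL_n(\frko)$-orbit of a non-degenerate form that stands behind the lifting.

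For (2.3) I would study the quadratic map $U \mapsto B[U]$ near $U = 1_n$. Writing $U = 1_n + X$ and expanding, one has $B[U] - B = BX + {}^tXB + {}^tXBX$, so choosing $X_0 = \tfrac12 B^{-1}(A-B)$ matches the linear part and leaves the quadratic error $\tfrac14 (A-B)B^{-1}(A-B)$. In the non-dyadic case, $A - B \in \vpi^{\frkl_B + 1}\calh_n(\frko)$ together with $\vpi^{\frkl_B}B^{-1} \in \calh_n(\frko)$ forces $X_0 \in \vpi M_n(\frko)$ and the error to have valuation $\ge \frkl_B + 2$, so Newton iteration converges $\vpi$-adically to a solution $U \in GL_n(\frko)$, yielding $\lambda_{\frkl_B} = \frkl_B + 1$. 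In the dyadic case each Newton step loses $e_0 = \ord_\frkp(2)$ through the factor $\tfrac12$; the main obstacle is showing that quadratic convergence still dominates, which produces a finite $\lambda_{\frkl_B}$ depending only on $\frkl_B$ and $e_0$.

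For (2.2) and (2.1), the first observation is that $S[X + \vpi^e Y] \equiv S[X] \pmod{\vpi^e \calh_n(\frko)}$ for every $X, Y \in M_{mn}(\frko)$, so $\calb_e(S,B)^{(r)}$ is a union of cosets of $\vpi^e M_{mn}(\frko)$ and $q^{en(n+1)/2}\,\mathrm{vol}(\calb_e(S,B)^{(r)}) = q^{-e(mn-n(n+1)/2)}N_e$ with $N_e = \#(\calb_e(S,B)^{(r)} \bmod \vpi^e)$. Stabilization amounts to a Hensel statement for the linear map $L_X : Y \mapsto {}^tYSX + {}^tXSY$, whose cokernel in $\calh_n(\frko)$ is controlled by the discriminant of ${}^tXSX \equiv B \pmod{\vpi^e}$. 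When $2S \in GL_m(\frko)$ and the last $r$ columns of $X$ are primitive, $L_X$ surjects onto $\calh_n(\frko)$ modulo $\vpi$ and (2.2) follows for every $e \ge 1$; for general non-degenerate $B$, the cokernel is annihilated by $\det(2B)$ and the Hensel step closes exactly when $e \ge 2\ord(\det(2B)) + 1$, yielding (2.1) with the stated bound.

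For (1), a direct computation using only $2A \in M_r(\frko)$ and $R \in M_{r,n-r}(\frko)$ shows that $T(R,A,X+\vpi^e Y) - T(R,A,X) \in \vpi^e \calh_{n-r}(\frko)$ for every $Y \in M_{r,n-r}(\frko)$, so the condition defining $\cala_e(S,T,R,A)$ depends on $X_2$ only modulo $\vpi^e$. Splitting the volume integral along cosets gives
\[
\mathrm{vol}(\cala_e(S,T,R,A)) = q^{-er(n-r)} \sum_{X \in M_{r,n-r}(\frko)/\vpi^e M_{r,n-r}(\frko)} \mathrm{vol}(\cala_e(S, T(R,A,X))),
\]
and applying (2.1) with $r = 0$ to each non-degenerate $T(R,A,X)$ (whose discriminant valuation is at most $l_0$ by hypothesis) yields both the existence of $\alpha_\frkp(S, T(R,A,X))$ for $e \ge 2l_0 + 1 = m_0$ and the desired finite-sum formula for $\alpha_\frkp(S,T,R,A)$.
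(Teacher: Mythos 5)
Your overall architecture matches the paper's: part (1) is obtained, exactly as in the paper's proof, by splitting the $X_2$-integration into cosets modulo $\frkp^e$ (using $T(R,A,X+\vpi^e Y)-T(R,A,X)\in\vpi^e\calh_{n-r}(\frko)$) and then invoking the stabilization of each $\alpha_{\frkp}(S,T(R,A,X))$ at level $e\ge m_0=2l_0+1$. Your Newton iteration for (2.3) is a correct, self-contained substitute for the paper's ``can easily be proved'' (in the dyadic case a choice such as $\lambda_{\frkl_B}=\frkl_B+3e_0+1$ makes the sketch close, since the discrepancy $\tfrac14(A-B)B^{-1}(A-B)$ then gains at least one power of $\vpi$ per step), and your argument for (2.2) is sound: for $2S\in GL_m(\frko)$ and $X$ primitive the map $W\mapsto {}^tWSX+{}^tXSW$ is surjective onto $\calh_n(\frko)$ already integrally (solve ${}^tXS'W=Z$ with $S'=2S$, $Z_{ii}=C_{ii}$, $Z_{ij}=2C_{ij}$ for $i<j$, $Z_{ij}=0$ for $i>j$), so every solution modulo $\vpi^e$ has exactly $q^{mn-n(n+1)/2}$ lifts modulo $\vpi^{e+1}$ for every $e\ge1$.

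The genuine gap is (2.1), which is precisely the step the paper outsources to [Sh1, Proposition 14.3]. For general non-degenerate $B$ and $0\le r<n$ (and $r=0$ is the case your proof of (1) actually uses), the stabilization of $q^{en(n+1)/2}\mathrm{vol}(\calb_e(S,B)^{(r)})$ for $e\ge 2\ord(\det(2B))+1$ is \emph{not} delivered by a one-step Hensel count: in general the reduction map from solutions modulo $\vpi^{e+1}$ to solutions modulo $\vpi^e$ is neither surjective nor has fibers of constant size $q^{mn-n(n+1)/2}$, no matter how large $e$ is. Concretely, take $m=n=1$, $r=0$, $q$ odd, $S=(1)$, $B=(\vpi^2u)$ with $u=y_0^2$ a unit square, so $2\ord(\det(2B))+1=5$. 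For every $e\ge 3$ the solutions of $x^2\equiv\vpi^2u \bmod \vpi^e$ are the $2q$ classes $x=\vpi y$ with $y\equiv\pm y_0 \bmod \vpi^{e-2}$; of these, only the two classes with $y\equiv\pm y_0\bmod\vpi^{e-1}$ lift modulo $\vpi^{e+1}$, and each of those has $q$ lifts (here the ``expected'' fiber size is $q^{mn-n(n+1)/2}=1$). The totals still satisfy $N_{e+1}=q^{mn-n(n+1)/2}N_e$, but this equality of totals is exactly the nontrivial counting statement of Siegel--Kitaoka--Shimura (e.g.\ [Ki2, Lemma 5.6.9 ff.] or [Sh1, Proposition 14.3]), in which the fact that the cokernel of $L_X$ is killed by $\det(2B)$ enters through substitutions of the form $X\mapsto X+\vpi^{e-d}Z$ and a global count over all solutions, not through a pointwise lifting argument. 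So the sentence ``the Hensel step closes exactly when $e\ge 2\ord(\det(2B))+1$'' asserts the conclusion without a mechanism that produces it, and since your (1) rests on (2.1) with $r=0$, the gap propagates to (1) as well.
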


\begin{proof}  
(1) For each $e$  put ${\bf A}_e= q^{e(n-r)(n-r+1)/2}{\rm vol}(\cala_e(S,T,R,A))$. Then we have
\[{\bf A}_e=\int_{M_{r,n-r}(\frko)} q^{e(n-r)(n-r+1)/2}{\rm vol}(\cala_e(S,T(R,A,X_2)))dX_2.\]
If $X_2 \equiv X_2' \in {\frkp}^eM_{nr}(\frko)$, then ${\rm vol}(\cala_e(S,T(R,A,X_2)))={\rm vol}(\cala_e(S,T(R,A,X_2')))$. Hence we have
\[{\bf A}_e=\sum_{X \in M_{r,n-r}(\frko)/{\frkp}^e M_{r,n-r}(\frko)}\int_{X+{\frkp}^eM_{r,n-r}(\frko)} q^{e(n-r)(n-r+1)/2}{\rm vol}(\cala_e(S,T(R,A,X)))dX\]
\[=q^{-er(n-r)}\sum_{X \in M_{r,n-r}(\frko)/{\frkp}^e M_{r,n-r}(\frko)}q^{e(n-r)(n-r+1)/2}{\rm vol}(\cala_e(S,T(R,A,X))).\]
Let $e \ge m_0$. Then, by using the same argument as in the proof of [\cite{Sh1}, Proposition 14.3], we can prove that
\[\alpha_{\frkp}(S,T(R,A,X))=q^{e(n-r)(n-r+1)/2}{\rm vol}(\cala_e(S,T(R,A,X))).\]
Hence we have
\[{\bf A}_e=q^{-er(n-r)}\sum_{X \in M_{r,n-r}(\frko )/\frkp^{e}M_{r,n-r}(\frko )} \alpha_{\frkp}(S,T(R,A,X)).\]
We also have
\[{\bf A}_{e+1}=q^{-(e+1)r(n-r)}\sum_{X \in M_{r,n-r}(\frko )/\frkp^{e+1}M_{r,n-r}(\frko )} \alpha_{\frkp}(S,T(R,A,X)).\]
We have  $\alpha_{\frkp}(S,T(R,A,X))= \alpha_{\frkp}(S,T(R,A,X'))$ if $X \equiv X' \text{ mod } \frkp^eM_{r,n-r}(\frko)$.
Hence
\[{\bf A}_{e+1}=q^{-(e+1)r(n-r)}q^{r(n-r)} \sum_{X \in M_{r,n-r}(\frko )/\frkp^{e}M_{r,n-r}(\frko )} \alpha_{\frkp}(S,T(R,A,X))={\bf A}_e.\]
This proves the assertion.\\
(2) The assertion (2.1) can be proved by using the same argument as in the proof of [\cite{Sh1}, Proposition 14.3].
The assertions (2.2) and (2.3) can easily be proved.
\end{proof}
\begin{lemma}
\label{lem.5.2}  
Let $S \in \calh_m(\frko)^{{\rm nd}}, T \in \calh_n(\frko)$ with $m \ge n$, and let $l$ be a non-negative integer such that $l \le n$.  Assume that 
the $\beta_{\frkp}(S,T)^{(l)}$ exists. Then, for any $W \in M_n(\frko)^{{\rm nd}}$,  we have
\begin{align*}
&\int_{{\rm Sym}_n(F)} \int_{M_{m,n-l}(\frko)} \int_{M_{ml}^{\mathrm{prm}}(\frko)} \psi({\rm tr}(YW(S[\left(\begin{matrix} X_1 & X_2 \end{matrix}\right)]-T)))dX_2dX_1dY
\\
&=|\det W|_{\frkp}^{-n-1}\beta_{\frkp}(S,T)^{(l)}.
\end{align*}
\end{lemma}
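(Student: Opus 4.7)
The plan is to reduce the left-hand side to the defining integral for $\beta_\frkp(S,T)^{(l)}$ via a change of variables in $Y$ that absorbs the matrix $W$. I would substitute $Y' = {}^tWYW$ on $\mathrm{Sym}_n(F)$; this is a linear automorphism because $W$ is non-degenerate, and its Jacobian equals $|\det W|_\frkp^{n+1}$. The Jacobian formula reflects the standard fact that the action $Y \mapsto {}^tWYW$ of $GL_n$ on $\mathrm{Sym}_n$ is the symmetric square of the standard representation, whose determinantal character is $(\det)^{n+1}$; a concrete verification is obtained by diagonalising $W$ (passing, if necessary, to an unramified extension) and reading off the induced map on the natural basis of $\mathrm{Sym}_n(\frko)$. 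Using the cyclicity of the trace and the symmetry of $S[(X_1\,X_2)] - T$, the integrand transforms into $\psi(\mathrm{tr}(Y'(S[(X_1\,X_2)] - T)))$, and what remains is precisely the defining expression for $\beta_\frkp(S,T)^{(l)}$, multiplied by the Jacobian factor $|\det W|_\frkp^{-(n+1)}$.

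Since the outer $Y$-integral is improper, defined as $\lim_{e\to\infty}\int_{\vpi^{-e}\mathrm{Sym}_n(\frko)}(\cdots)$, the substitution must be handled at finite level. The plan is to truncate to $Y \in \vpi^{-e}\mathrm{Sym}_n(\frko)$; the substituted region is then sandwiched between $\vpi^{-e+c}\mathrm{Sym}_n(\frko)$ and $\vpi^{-e-c}\mathrm{Sym}_n(\frko)$ with $c = \mathrm{ord}(\det W)$, a discrepancy that disappears in the limit. The hypothesis that $\beta_\frkp(S,T)^{(l)}$ exists, combined with the Fubini-type argument in the proof of Lemma~\ref{lem.5.1}, then ensures that both sides of the truncated identity converge to the desired quantities as $e \to \infty$.

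The main obstacle is the rigorous coupling of the substitution with the improper outer integral: one must show that the limit $e \to \infty$ and the change of variables commute, which boils down to uniform control on the integrands just as in Lemma~\ref{lem.5.1}. Once the Jacobian $|\det W|_\frkp^{n+1}$ for the symmetric-square action is established, the remainder of the argument is essentially the same type of Haar-measure bookkeeping used there.
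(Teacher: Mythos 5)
Your proposal is, in substance, the paper's own proof: the paper also writes the outer integral as $\lim_{e\to\infty}\int_{\frkp^{-e}\mathrm{Sym}_n(\frko)}$, performs the substitution $Y\mapsto {}^tWYW$ (module $|\det W|_{\frkp}^{n+1}$, the symmetric-square Jacobian), so that the domain becomes $\frkp^{-e}\mathrm{Sym}_n(\frko)[W]$ at the cost of the factor $|\det W|_{\frkp}^{-n-1}$, and then identifies the limit along this distorted exhaustion with $\beta_{\frkp}(S,T)^{(l)}$. Your sandwiching remark $\frkp^{-e}\mathrm{Sym}_n(\frko)[W]$ between $\frkp^{-e+c}\mathrm{Sym}_n(\frko)$ and $\frkp^{-e-c}\mathrm{Sym}_n(\frko)$, together with the existence hypothesis, makes explicit a step the paper passes over in its final equality, so on this point you are if anything more careful.

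One caveat. Your claim that ``cyclicity of the trace and symmetry of $S[(X_1\ X_2)]-T$'' convert $\psi(\mathrm{tr}(YW(S[(X_1\ X_2)]-T)))$ into $\psi(\mathrm{tr}(Y'(S[(X_1\ X_2)]-T)))$ under $Y'={}^tWYW$ is not correct for the literal matrix product $YW$: substituting $Y={}^tW^{-1}Y'W^{-1}$ gives $\mathrm{tr}(Y'(S[(X_1\ X_2)]-T)W^{-1})$, and no trace identity removes the $W^{-1}$; indeed for the literal product the induced map $Y\mapsto\frac12(YW+{}^tWY)$ on $\mathrm{Sym}_n$ need not be invertible, and the exponent would come out wrong (for $n=1$ the substitution $y'=wy$ produces $|w|_{\frkp}^{-1}$, not the stated $|w|_{\frkp}^{-2}$). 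The resolution is that the statement should be read with $Y[W]={}^tWYW$ in place of $YW$: this is the form the paper's own proof manipulates (its domain becomes $\frkp^{-e}\mathrm{Sym}_n(\frko)[W]$ precisely because the substitution is $Y\mapsto Y[W]$), and it is the form actually needed in the proof of Proposition \ref{prop.5.1}, where the integrand is $\psi(\mathrm{tr}(Y[1_{n-l}\bot{}^tW]g(X_1,X_2)))$. With that reading your substitution and Jacobian are exactly right and no appeal to cyclicity is needed; with the literal reading, no proof could succeed, since the identity itself fails.
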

\begin{proof}
For a subset $\cals$ of ${\rm Sym}(F)$ and an element $V \in GL_n(F)$ put
\[\cals[V]=\{ X[V] \ | \ X \in \cals \}.\]
Then we have
\begin{align*}
& \int_{{\rm Sym}_n(F)} \int_{M_{m,n-l}(\frko)} \int_{M_{ml}^{\mathrm{prm}}(\frko)} \psi({\rm tr}(YW(S[\left(\begin{matrix} X_1 & X_2 \end{matrix}\right)]-T)))dX_2dX_1dY \\ 
& = \lim_{e \rightarrow \infty}  \int_{{\frkp}^{-e} {\rm Sym}_n(\frko)} \int_{M_{m,n-l}(\frko)} \int_{M_{ml}^{\mathrm{prm}}(\frko)} \psi({\rm tr}(YW(S[\left(\begin{matrix} X_1 & X_2 \end{matrix}\right)]-T)))dX_2dX_1dY \\
&=|\det W|_{\frkp}^{-n-1} \\
& \times \lim_{e \rightarrow \infty}  \int_{{\frkp}^{-e} {\rm Sym}_n(\frko)[W] } \int_{M_{m,n-l}(\frko)} \int_{M_{ml}^{\mathrm{prm}}(\frko)} \psi({\rm tr}(Y(S[\left(\begin{matrix} X_1 & X_2 \end{matrix}\right)]-T)))dX_2dX_1dY \\
&=|\det W|_{\frkp}^{-n-1} \\
& \times  \int_{{\rm Sym}_n(F) } \int_{M_{m,n-l}(\frko)} \int_{M_{ml}^{\mathrm{prm}}(\frko)} \psi({\rm tr}(Y(S[\left(\begin{matrix} X_1 & X_2 \end{matrix}\right)]-T)))dX_2dX_1dY.
\end{align*}
This proves the assertion.
\end{proof}

\begin{proposition} 
\label{prop.5.1}
 Let $A$ and $B$ be non-degenerate half-integral matrices of degree $m$ and $n$, respectively, over $\frko $ such that $m \ge n$. Then  we have
\[\alpha_{\frkp}(A,B)=\sum_{W \in GL_l(\frko ) \backslash M_l(\frko)^{{\rm nd}}} q^{\ord(\det W) (-m+n+1)}\beta_{\frkp}(A,B[1_{n-l} \bot W^{-1}])^{(l)}.\]
\end{proposition}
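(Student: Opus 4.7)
The plan is to start from the integral definition of $\alpha_\frkp(A,B)$ and partition the integration over $X = (X_1, X_2) \in M_{m,n-l}(\frko) \times M_{m,l}(\frko)$ according to the elementary divisor structure of the last $l$ columns $X_2$. The locus of $X_2$ whose columns are $F$-linearly dependent has measure zero (since $m \ge l$), so almost every $X_2$ has full rank $l$ over $F$ and admits a factorization $X_2 = X_2' W$ with $X_2' \in M_{m,l}(\frko)^{\mathrm{prm}}$ and $W \in M_l(\frko) \cap GL_l(F)$. By primitivity of $X_2'$, this factorization is unique up to $(X_2', W) \mapsto (X_2' U, U^{-1} W)$ for $U \in GL_l(\frko)$, so the $X_2$-integration decomposes as a sum over $W \in GL_l(\frko) \backslash M_l(\frko)^{\mathrm{nd}}$ of integrals over $X_2' \in M_{m,l}(\frko)^{\mathrm{prm}}$.

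Next I would compute the Jacobians induced by the two changes of variables. Under $X_2 = X_2' W$ one has $dX_2 = |\det W|_\frkp^m \, dX_2'$. Setting $D = 1_{n-l} \bot W$, the matrix identity
\begin{equation*}
A[(X_1, X_2' W)] - B \;=\; {}^tD \bigl(A[(X_1, X_2')] - B[1_{n-l} \bot W^{-1}]\bigr) D
\end{equation*}
lets me rewrite $\mathrm{tr}(Y(A[(X_1, X_2'W)] - B)) = \mathrm{tr}(DY\,{}^tD \cdot (A[(X_1, X_2')] - B'))$ where $B' = B[1_{n-l} \bot W^{-1}]$. Substituting $Y \mapsto Y' := DY\,{}^tD$ on $\mathrm{Sym}_n(F)$, the Jacobian is computed block-wise: the upper-left $(n-l) \times (n-l)$ block is fixed, the $(n-l) \times l$ off-diagonal block is acted on by right multiplication by ${}^tW$ (Jacobian $|\det W|_\frkp^{n-l}$), and the lower-right symmetric $l \times l$ block is acted on by $[{}^tW]$ (Jacobian $|\det W|_\frkp^{l+1}$), for a total of $|\det W|_\frkp^{n+1}$. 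Hence $dY = |\det W|_\frkp^{-(n+1)} dY'$.

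Assembling the Jacobians, the contribution of each coset $W$ to $\alpha_\frkp(A,B)$ becomes $|\det W|_\frkp^m \cdot |\det W|_\frkp^{-(n+1)} \cdot \beta_\frkp(A, B')^{(l)} = |\det W|_\frkp^{m-n-1} \beta_\frkp(A, B[1_{n-l} \bot W^{-1}])^{(l)}$, which upon using $|\det W|_\frkp = q^{-\ord(\det W)}$ yields the desired formula. The main technical obstacle is justifying the interchange of the (a priori infinite) sum with the improper integrals defining $\alpha_\frkp$ and $\beta_\frkp^{(l)}$. My plan is to work first with the finite-level volume expressions $\alpha_\frkp(A,B) = \lim_e q^{en(n+1)/2}\mathrm{vol}(\cala_e(A,B))$ and $\beta_\frkp(A,B')^{(l)} = \lim_e q^{en(n+1)/2}\mathrm{vol}(\calb_e(A,B')^{(l)})$ supplied by Lemma \ref{lem.5.1}, decompose $\cala_e(A,B)$ according to the Smith normal form of $X_2$, and observe that at each truncation level $e$ only those $W$ whose elementary divisors are bounded in terms of $e$ and $\mathrm{ord}(\det 2B)$ give a nonzero contribution, so that one may pass to the limit termwise.
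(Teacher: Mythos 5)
Your proposal is correct and follows essentially the same route as the paper: the paper likewise factors $X_2=X_2'W$ with $X_2'$ primitive over $W\in GL_l(\frko)\backslash M_l(\frko)^{\rm nd}$, extracts the Jacobian $|\det W|_\frkp^{m}$, and rewrites the phase as $\psi\bigl(\mathrm{tr}(Y[1_{n-l}\bot{}^tW]\,g(X_1,X_2))\bigr)$ with $g(X_1,X_2)=A[(X_1\ X_2)]-B[1_{n-l}\bot W^{-1}]$. Your explicit $Y\mapsto DY\,{}^tD$ substitution with total module $|\det W|_\frkp^{n+1}$ is precisely the content of the paper's Lemma \ref{lem.5.2}, and your finite-level justification of the sum--integral interchange is a reasonable variant of the convergence bookkeeping the paper delegates to Lemmas \ref{lem.5.1} and \ref{lem.5.2}.
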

\begin{proof}
We have 
\begin{align*}
&\alpha_{\frkp}(A,B) \\
=&\int_{\mathrm{Sym}_n(F)} \int_{M_{m,n-l}(\frko )} \int_{M_{m,l}(\frko )} \psi(\mathrm{tr} (Y(A\left[\left(\begin{matrix} X_1 & X_2 \end{matrix}\right) \right]-B)))dX_2dX_1dY \\
=&\int_{\mathrm{Sym}_n(F)} \sum_{W\in GL_l(\frko )\backslash M_l(\frko)^{{\rm nd}}} |\det W|_{\frkp}^m  \\
&\times \int_{M_{m,n-l}(\frko )} \int_{M^\mathrm{prm}_{m,l}(\frko )} \psi(\mathrm{tr} (Y\!A\left[\left(\begin{matrix} X_1 & X_2\!W \end{matrix}\right) \right]- Y\!B)) dX_2dX_1dY \\
=& \sum_{W\in GL_l(\frko )\backslash M_l(\frko)^{{\rm nd}}} |\det W|_{\frkp}^{m}   \\
&\times \int_{\mathrm{Sym}_n(F)} \int_{M_{m,n-l}(\frko )}\int_{M^\mathrm{prm}_{m,l}(\frko )}\psi\left(\mathrm{tr}(Y[1_{n-l} \bot \,{}^t W]A\left[\left(\begin{matrix} X_1 & X_2 \end{matrix}\right) \right]-Y\!B)\right)
dX_2dX_1dY \\
=& \sum_{W\in GL_l(\frko )\backslash M_l(\frko)^{{\rm nd}}}|\det W|_{\frkp}^{m}     \\
&\times \int_{\mathrm{Sym}_n(F)} \int_{M_{m,n-l}(\frko )}\int_{M^\mathrm{prm}_{m,l}(\frko )}\psi\left(\mathrm{tr}( Y[1_{n-l} \bot \,{}^t W]g(X_1,X_2))\right)dX_2dX_1dY ,
\end{align*}
with $g(X_1,X_2)=A\left[\left(\begin{matrix} X_1 & X_2 \end{matrix}\right) \right]-B[1_{n-l} \bot W^{-1}]$. Then the assertion follows from Lemma \ref{lem.5.2}.

\end{proof}

\begin{definition}
\label{def.5.2}
Put  ${\bf D}_{l,i}= GL_l(\frko) (\vpi 1_i \bot 1_{l-i}) GL_l(\frko)$  for $0 \le i \le l$.  
We define the function $\pi_l$ on $M_l(\frko)^{{\rm nd}}$ as
\[\pi_l(W) =q^{i(i-1)/2}(-1)^i  \text{ for }   W \in {\bf D}_{l,i}\]
and
\[\pi_l(W)=0  \text{ if } W\not\in \bigcup_{i=0}^l {\bf D}_{l,i}.\]
\end{definition}

Then, using the same argument as in the proof of [\cite{Ki1}, Theorem 3.1], we obtain the following.

\bigskip

\begin{corollary} 
\label{cor.5.1}
  Let $A$ and $B$ be non-degenerate half-integral matrices of degree $m$ and $n$, respectively, over $\frko $ such that $m \ge n$. Then  we have
\[\sum_{W \in GL_l(\frko) \backslash M_l(\frko)^{{\rm nd}}} \pi_l(W)q^{(-m+n+1)\ord(\det W)}\alpha_{\frkp}(A,B[1_{n-l} \bot W^{-1}])=\beta_{\frkp}(A,B)^{(l)}.\]
\end{corollary}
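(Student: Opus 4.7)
The approach is Möbius inversion on the lattice of $\frkk$-subspaces of $\frkk^l = \frko^l/\vpi\frko^l$, following Kitaoka's argument \cite{Ki1}. The idea is to decompose the primitivity constraint in the integral representation of $\beta_\frkp(A,B)^{(l)}$ into a signed sum of unconstrained integrals, each identifiable with an $\alpha_\frkp$-value on a (possibly non-integral) half-integral matrix of the form $B[1_{n-l}\bot W^{-1}]$.

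Concretely, starting from
\[\beta_\frkp(A, B)^{(l)} = \int_{\mathrm{Sym}_n(F)} \int_{M_{m,n-l}(\frko)} \int_{M_{m,l}(\frko)} [X_2 \text{ primitive}]\, \psi(\mathrm{tr}(Y(A[(X_1,X_2)]-B)))\,dX_2\,dX_1\,dY,\]
write $\bar X_2 \in M_{m,l}(\frkk)$ for the reduction; primitivity is $\ker \bar X_2 = 0$, and Möbius inversion on the subspace lattice gives
\[[\ker \bar X_2 = 0] = \sum_{V \subseteq \frkk^l} (-1)^{\dim V}\, q^{\binom{\dim V}{2}}\, [V \subseteq \ker \bar X_2].\]
For each $j$-dimensional subspace $V$, pick $g_V \in GL_l(\frko)$ whose first $j$ columns reduce to a basis of $V$ and set $W_V := (\vpi 1_j \bot 1_{l-j})\,g_V^{-1} \in {\bf D}_{l,j}$. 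The condition $V \subseteq \ker \bar X_2$ then amounts to $X_2 = X_2'' W_V$ with $X_2'' \in M_{m,l}(\frko)$ varying freely; the substitution has Jacobian $|\det W_V|_\frkp^m = q^{-jm}$. After using $A[(X_1, X_2'' W_V)] = A[(X_1, X_2'')][1_{n-l} \bot W_V]$ and performing the further change of variable $Y \mapsto Y[1_{n-l} \bot {}^tW_V]$ (Jacobian $q^{j(n+1)}$) exactly as in the proof of Proposition~\ref{prop.5.1}, the $V$-contribution becomes $(-1)^j q^{\binom{j}{2}} q^{j(n+1-m)} \alpha_\frkp(A, B[1_{n-l} \bot W_V^{-1}])$, where $\alpha_\frkp$ of a non-integral matrix is interpreted via the analogous integral formula.

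The final step is to repackage the sum over subspaces as a sum over left cosets. The assignment $V \mapsto GL_l(\frko)\,W_V$ is a bijection between $j$-dim subspaces of $\frkk^l$ and left $GL_l(\frko)$-cosets contained in ${\bf D}_{l,j}$: both sets have cardinality $\binom{l}{j}_q$, and a direct computation (examining the lower-left block of $g_{V_2}^{-1}g_{V_1}$ modulo $\vpi$) shows that $W_{V_1}, W_{V_2}$ lie in the same left coset iff $V_1 = V_2$. Under this bijection $\dim V = \ord(\det W_V)$ and $(-1)^{\dim V} q^{\binom{\dim V}{2}}$ equals $\pi_l(W_V)$. Since $\pi_l$ vanishes outside $\bigcup_j {\bf D}_{l,j}$, reindexing yields precisely the formula of the corollary. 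The main technical point is the bijection $V \leftrightarrow W_V$ together with the correct bookkeeping of the combined Jacobian $q^{j(n+1-m)}$, both of which parallel \cite{Ki1}.
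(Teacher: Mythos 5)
Your argument is correct, but it is not the derivation the paper intends. The paper gives no independent argument for Corollary \ref{cor.5.1}: it is obtained from Proposition \ref{prop.5.1} ``by the same argument as in the proof of [\cite{Ki1}, Theorem 3.1]'', i.e.\ by Kitaoka's formal inversion --- one applies Proposition \ref{prop.5.1} to each $B[1_{n-l}\bot W^{-1}]$ occurring on the left-hand side, regroups the resulting double coset sum by the product $W''=W'W$ (legitimate because the $\beta$-terms are non-negative and $\pi_l$ is supported on finitely many cosets), and collapses the inner sum over factorizations with the counting lemma equivalent to $\sum_{j=0}^{i}(-1)^jq^{j(j-1)/2}\binom{i}{j}_q=\delta_{i,0}$. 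You instead prove the corollary directly from the integral representation of $\beta_{\frkp}(A,B)^{(l)}$: Möbius inversion on the lattice of subspaces of $\frkk^l$ unfolds the primitivity constraint, each subspace term becomes an $\alpha$-value after the substitutions $X_2=X_2''W_V$ and $Y\mapsto Y[1_{n-l}\bot{}^tW_V]$ (the Lemma \ref{lem.5.2}--type cofinality step, exactly as in the proof of Proposition \ref{prop.5.1}), and the reindexing $V\mapsto GL_l(\frko)W_V$ --- the inverse of $W\mapsto\ker\bar W$ on ${\bf D}_{l,j}$ --- turns $(-1)^{\dim V}q^{\binom{\dim V}{2}}$ into $\pi_l(W_V)$; your injectivity check via the lower-left block is right, and surjectivity can in fact be seen directly (for $W\in{\bf D}_{l,j}$ take $V=\ker\bar W$ and factor $Wg_V=W'(\vpi 1_j\bot 1_{l-j})$ with $\det W'$ a unit), so the cardinality count is not even needed. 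The combinatorial core is the same as Kitaoka's (your Möbius function of the subspace lattice is the alternating $q$-binomial identity), but your proof is self-contained at the level of the integrals, never uses the statement of Proposition \ref{prop.5.1}, and explains $\pi_l$ conceptually, whereas the paper's route is purely formal once Proposition \ref{prop.5.1} is available. Two points worth making explicit in your write-up: cosets with $B[1_{n-l}\bot W_V^{-1}]\notin\calh_n(\frko)$ contribute $0$ on both sides (the sets $\cala_e$ are empty), which is the same convention the statement of the corollary already relies on; and the termwise passage to the limit $e\to\infty$ rests on the existence of each $\alpha_{\frkp}(A,B[1_{n-l}\bot W_V^{-1}])$ for the non-degenerate targets (Lemma \ref{lem.5.1}, [\cite{Sh1}]), exactly as in Lemma \ref{lem.5.2}.
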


Let $M$ be a free module of rank $n$ over $\frko$, and $Q$ a non-degenerate quadratic form on $M$ with values in $\frko$.
The pair $(M, Q)$ is called a quadratic module over $\frko$.
The symmetric bilinear form $(x, y)=(x, y)_Q$ associated to $Q$ is defined by
\[
 (x, y)_Q=Q(x+y)-Q(x)-Q(y), \qquad x, y\in M.
\]
When there is no fear of confusion, we simply denote $(x, y)$.
We denote by  $\mathrm{s}(M)$  the fractional ideal of $F$ generated by  $\{(x,y)\,|\, x,y\in M\}$, and call it the scale of $M$.
For a basis $\{z_1, \ldots, z_n\}$, we define a matrix $B=(b_{ij})\in\calh_n(\frko)$ by
\[
 b_{ij}=\frac12 (z_i,z_j).
\]
Two matrices are $GL_n(\frko)$-equivalent if and only if the associated quadratic modules are isomorphic.

An $\frko$-submodule $M'$ of a free $\frko$-free module $M$ is said to be primitive if $M'$ is a direct summand of $M$. Let $\{u_1,\ldots,u_m\}$ be a basis of $M$, and let $\{v_1,\ldots,v_l\}$ be a basis of a submodule $M'$.
Write 
\[v_j=\sum_{i=1}^m a_{ij} u_j \ (j=1,\ldots, l).\]
Then $M'$ is primitive if and only if $(a_{ij})_{1 \le i \le m, 1 \le j \le l}$ is primitive.  
For an element $B \in \calh_n(\frko)$ we denote by $\langle B \rangle$ the quadratic module $(M,Q)$ with a basis $\{z_1,\ldots,z_n\}$ such that
\[\left({1 \over 2}(z_i,z_j)\right)_{1 \le i,j \le n}=B.\]
We note that the isomorphism class of $\langle B \rangle$ is uniquely determined by the $GL_n(\frko)$-equivalence class of $B$.
To prove an induction formula for the local densities, we first prove the following lemma:

\bigskip
\begin{lemma} 
 \label{lem.5.3}
Let $k$ and $r$  be  positive integers such that $k \ge r$. Let $(M,Q)$ be a quadratic module over $\frko $ with a basis  $\{z_1,\ldots, z_{2k}\}$
such that 
\[\left({1 \over 2} (z_i,z_j)\right)_{1 \le i,j \le 2k}=H_k .\]
Let $z_{2k-r+1}',\ldots,z_{2k}'$ be  elements of $M$, and put 
\[A=\left({1 \over 2}(z_{2k-r+i}',z_{2k-r+j}')\right)_{1 \le i,j \le r}.\]
Assume that 
$\sum_{i=1}^r\frko  z_{2k-r+i}'$ is primitive, and $A \in \frkp\calh_r(\frko )$. Then there exist  elements $z_{2k-2r+1}',\ldots,z_{2k-r}'$ of $M$ and submodules $M_1'$ and $M_2'$ of $M$  such that 
\begin{itemize}
\item[(1)] $M_2'=\sum_{i=1}^{2r}\frko z_{2k-2r+i}'$ and $M_2'= \left\langle \mattwo(O;2^{-1}1_r;2^{-1}1_r;A) \right\rangle$. 
\item[(2)] $M_1'$ is isometric to $\langle H_{k-r} \rangle$ 
\item[(3)] $M= M_1' \bot M_2'$.
\end{itemize}
\end{lemma}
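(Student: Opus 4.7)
The plan is to construct the vectors $z'_{2k-2r+1},\dots,z'_{2k-r}$ as approximate ``dual vectors'' to the given $z'_{2k-r+1},\dots,z'_{2k}$, producing the rank-$2r$ sublattice $M_2'$ with the prescribed Gram matrix; its orthogonal complement $M_1':=(M_2')^\perp$ will then be identified with $\langle H_{k-r}\rangle$ by Witt cancellation.

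Since $M\cong\langle H_k\rangle$ has bilinear-form matrix $2H_k$ of determinant $(-1)^k\in\frko^\times$, the pairing on $M$ is unimodular. The primitivity of $N:=\sum_{i=1}^r\frko z'_{2k-r+i}$ therefore produces $w_1,\dots,w_r\in M$ with $(z'_{2k-r+i},w_j)=\delta_{ij}$, and the half-integral Gram matrix of the tuple $(w_1,\dots,w_r,z'_{2k-r+1},\dots,z'_{2k})$ has the form $\bigl(\begin{smallmatrix} C & 2^{-1}1_r\\ 2^{-1}1_r & A\end{smallmatrix}\bigr)$ for some $C\in\calh_r(\frko)$. To kill the $C$-block, I look for $z'_{2k-2r+i}:=\sum_j\beta_{ij}w_j+\sum_j\gamma_{ij}z'_{2k-r+j}$ with $\beta,\gamma\in M_r(\frko)$ satisfying $(z'_{2k-2r+i},z'_{2k-r+k})=\delta_{ik}$ and $(z'_{2k-2r+i},z'_{2k-2r+k})=0$. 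The first system forces $\beta=1_r-2\gamma A$, and the second then reduces to a matrix equation
\[
\gamma+\gamma^T=-2C+\vpi\,R(\gamma),
\]
where $R(\gamma)\in M_r(\frko)$ depends polynomially on $\gamma$; the extracted factor $\vpi$ reflects the hypothesis $A\in\vpi\calh_r(\frko)$, which forces $2A\in\vpi M_r(\frko)$. I solve this by Hensel-type iteration, starting from $\gamma_0\in M_r(\frko)$ with $\gamma_0+\gamma_0^T=-2C$ (such $\gamma_0$ exists because $-2C$ has diagonal $-2Q(w_i)\in 2\frko$; take $\gamma_0$ lower-triangular with prescribed diagonal $-C_{ii}$). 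Successive corrections lie in $\vpi^n M_r(\frko)$ and converge by completeness of $\frko$.

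Set $M_2':=\sum_{i=1}^{2r}\frko z'_{2k-2r+i}$. Its bilinear-form Gram matrix $\bigl(\begin{smallmatrix} O & 1_r\\ 1_r & 2A\end{smallmatrix}\bigr)$ has determinant $(-1)^r\in\frko^\times$, so $M_2'$ is primitive unimodular, yielding the orthogonal splitting $M=M_1'\bot M_2'$ with $M_1':=(M_2')^\perp$ of rank $2(k-r)$. Moreover $M_2'\cong\langle H_r\rangle$: replacing $z'_{2k-r+i}$ within $M_2'$ by $z'_{2k-r+i}-\sum_l\alpha_{il}z'_{2k-2r+l}$ for any $\alpha\in M_r(\frko)$ with $\alpha+\alpha^T=2A$ (solvable since $2A$ has diagonal in $2\frko$) makes the lower-right block vanish, after which reordering the basis yields $H_r$. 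Comparing the decompositions $\langle H_k\rangle\cong M_1'\bot\langle H_r\rangle$ and $\langle H_k\rangle\cong\langle H_{k-r}\rangle\bot\langle H_r\rangle$ and invoking Witt cancellation for unimodular quadratic $\frko$-lattices gives $M_1'\cong\langle H_{k-r}\rangle$.

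The delicate point is the Hensel iteration in Step~2: in the possibly dyadic case one must check that each intermediate right-hand side $-2C+\vpi R(\gamma_n)$ has diagonal entries in $2\frko$, so that the next $\gamma_{n+1}$ can actually be chosen in $M_r(\frko)$. This is automatic because each error term is realized as the half-integral Gram matrix of an $\frko$-linear combination of vectors in $M$, whose diagonal entries lie in $2\frko$ by definition; the smallness $A\in\frkp\calh_r(\frko)$ then supplies the $\vpi$-adic contraction driving convergence.
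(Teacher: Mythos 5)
Your global strategy coincides with the paper's: produce $M_2'$ with half-integral Gram matrix $\left(\begin{smallmatrix} O & 2^{-1}1_r \\ 2^{-1}1_r & A \end{smallmatrix}\right)$, split it off because its bilinear Gram matrix is unimodular, note $M_2'\cong\langle H_r\rangle$, and identify $M_1'=(M_2')^{\perp}$ with $\langle H_{k-r}\rangle$ by cancellation; those steps are correct. The gap is in the step you settle by ``completeness of $\frko$'': the Hensel-type iteration solving $\gamma+{}^t\gamma=-2C+\vpi R(\gamma)$ in $M_r(\frko)$. The point you flag as delicate (each right-hand side having diagonal in $2\frko$) is indeed harmless: the right-hand side equals $(\gamma+{}^t\gamma)$ minus the bilinear Gram matrix of the vectors $x_i(\gamma)$, so its $i$-th diagonal entry is $2\gamma_{ii}-2Q(x_i(\gamma))\in 2\frko$. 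What is not justified is the contraction. If $\gamma_n-\gamma_{n-1}\in\vpi^{a}M_r(\frko)$, then $-2C+\vpi R(\gamma_n)$ and $-2C+\vpi R(\gamma_{n-1})$ differ by an element of $\vpi^{a+1}M_r(\frko)$; but to pass to $\gamma_{n+1}-\gamma_n$ you must halve the diagonal entries of this difference, which a priori lie only in $\vpi^{a+1}\frko\cap 2\frko$, so the naive estimate yields $\gamma_{n+1}-\gamma_n\in\vpi^{a+1-e_0}M_r(\frko)$, a gain of $1-e_0\le 0$ per step when the residue characteristic is $2$ --- exactly the case in which the lemma is not classical. Hence ``successive corrections lie in $\vpi^nM_r(\frko)$'' does not follow from the single factor $\vpi$ supplied by $A\in\frkp\calh_r(\frko)$, which is the only reason you give.

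The iteration can be rescued, but it needs a cancellation absent from your write-up. Since $\beta=1_r-2\gamma A$ forces $(x_i(\gamma),z'_{2k-r+j})=\delta_{ij}$ for every $\gamma$, one checks for $\gamma\equiv\gamma'\bmod\vpi^{n}$ that $Q(x_i(\gamma))-Q(x_i(\gamma'))\equiv(\gamma-\gamma')_{ii}\bmod\vpi^{n+1}$ (using $\beta-\beta'=-(\gamma-\gamma')\,2A\in\vpi^{n+1}M_r(\frko)$ and $Q(z'_{2k-r+j})=A_{jj}\in\frkp$); therefore the diagonal entries of the difference of right-hand sides, namely $2\bigl[(\gamma-\gamma')_{ii}-\bigl(Q(x_i(\gamma))-Q(x_i(\gamma'))\bigr)\bigr]$, lie in $2\vpi^{n+1}\frko$, and after halving one still gains a full factor $\vpi$ per step; with this the fixed-point argument closes and the remainder of your proof goes through. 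For comparison, the paper does not construct these complementary isotropic vectors by iteration at all: it asserts their existence directly from the primitivity of $\sum_{i}\frko z'_{2k-r+i}$, the hypothesis $A\in\frkp\calh_r(\frko)$, and the fact that $M$ is an orthogonal sum of hyperbolic planes (the analogue being [\cite{Kat1}, Lemma 2.3]), and then performs the splitting and the cancellation exactly as you do; so the part you tried to make explicit is precisely where your argument currently falls short.
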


{\it Proof.} Since $\sum_{i=1}^r\frko  z_{2k-2r+2i}'$ is primitive, $A \in \frkp\calh_r(\frko )$, and $M$ is an orthogonal sum of hyperbolic spaces, there exist  elements $z_{2k-2r+1}',\ldots,z_{2k-r}'$ of $M$ such that 
\[\left({1 \over 2}(z_{2k-2r+i}',z_{2k-2r+j}')\right)_{1 \le i,j \le 2r}=\mattwo(O;2^{-1}1_r;2^{-1}1_r;A).\]
Hence the submodule 
\[M_2'=\sum_{i=1}^{2r}\frko z_{2k-2r+i}'\]
satisfies the condition (1). Moreover since $s(M)={1 \over 2}\frko $ and 
\[{1 \over 2}(u,v) \in {1 \over 2}\frko \]
for any $u \in M_2'$ and $v \in M$, Hence we have
\[M=M_2'^{\bot} \bot M_2'.\]
The module $M_2'$ is isometric to $\langle H_r \rangle$ and $M$ is isometric to $\langle H_k \rangle$. Hence, by the cancellation theorem, there exists  a submodule $M_1'$ of $M$ such that 
\[M_1' \cong \langle H_{k-r}\rangle,\]
and
\[M=M_1' \bot M_2'.\]
This proves the assertion.

\bigskip

\begin{remark}
A similar assertion has been proved in [\cite{Kat1}, Lemma 2.3]. 
\end{remark}

\bigskip
\begin{corollary} 
\label{cor.5.2} Let $\Xi \in M_{2k,r}^{\mathrm{prm}}(\frko)$ such that $H_k[\Xi] \in \frkp\calh_r(\frko)$. 
 Then there is an element $U \in GL_{2k}(\frko )$ of such that 
\begin{itemize}
\item[(1)] $H_k[U]=H_{k-r} \bot \mattwo(O;2^{-1}1_r;2^{-1}1_r;H_k[\Xi])$,
\item[(2)] $U^{-1}\Xi=\left(\begin{matrix} O \\ 1_r \end{matrix}\right)$.
\end{itemize}
\end{corollary}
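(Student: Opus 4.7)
The plan is to translate the matrix statement into the quadratic-module language of Lemma \ref{lem.5.3} and read off $U$ as the change-of-basis matrix.

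First I would fix a basis $\{z_1, \ldots, z_{2k}\}$ of the quadratic module $M := \langle H_k \rangle$ realizing $H_k$, and use the columns of $\Xi$ to define
\[
z'_{2k-r+j} := \sum_{i=1}^{2k} \Xi_{ij}\, z_i, \qquad j=1,\ldots,r.
\]
By the definition of $H_k[\Xi]$, the Gram matrix satisfies $\bigl(\tfrac12(z'_{2k-r+i}, z'_{2k-r+j})\bigr)_{ij} = H_k[\Xi] =: A$. The primitivity of $\Xi$ is precisely the statement that $\sum_i \frko\, z'_{2k-r+i}$ is a primitive submodule of $M$, and the hypothesis $H_k[\Xi] \in \frkp\calh_r(\frko)$ says $A \in \frkp\calh_r(\frko)$. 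Thus the assumptions of Lemma \ref{lem.5.3} are satisfied.

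Applying Lemma \ref{lem.5.3}, I obtain vectors $z'_{2k-2r+1}, \ldots, z'_{2k-r}$ and a decomposition $M = M_1' \bot M_2'$ with $M_2' = \sum_{i=1}^{2r} \frko z'_{2k-2r+i} \cong \bigl\langle \mattwo(O;2^{-1}1_r;2^{-1}1_r;A) \bigr\rangle$ and $M_1' \cong \langle H_{k-r} \rangle$. Choose an $\frko$-basis $\{w_1, \ldots, w_{2k-2r}\}$ of $M_1'$ that realizes $H_{k-r}$. Then
\[
\{w_1, \ldots, w_{2k-2r},\; z'_{2k-2r+1}, \ldots, z'_{2k}\}
\]
is another $\frko$-basis of $M$, and I define $U \in M_{2k}(\frko)$ by
\[
(w_1, \ldots, w_{2k-2r}, z'_{2k-2r+1}, \ldots, z'_{2k}) = (z_1, \ldots, z_{2k})\, U.
\]
Since both families are $\frko$-bases of $M$, we have $U \in GL_{2k}(\frko)$.

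Property (1) is immediate: by construction, the Gram matrix of the new basis (divided by 2) is the block-diagonal form $H_{k-r} \bot \mattwo(O;2^{-1}1_r;2^{-1}1_r;H_k[\Xi])$, which equals $H_k[U]$. For property (2), observe that the columns of $\Xi$ are the coordinates of $z'_{2k-r+1}, \ldots, z'_{2k}$ with respect to $\{z_i\}$, while these same vectors sit in the last $r$ slots of the new basis, so their coordinates in the new basis form $\left(\begin{smallmatrix} O \\ 1_r \end{smallmatrix}\right)$. Since $U$ is the change-of-basis matrix, this gives $U^{-1}\Xi = \left(\begin{smallmatrix} O \\ 1_r \end{smallmatrix}\right)$. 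There is no real obstacle here beyond Lemma \ref{lem.5.3} itself; the remaining content is the translation between coordinate matrices and bases of the quadratic module.
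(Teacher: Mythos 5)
Your proposal is correct and is exactly the route the paper intends: Corollary \ref{cor.5.2} is stated as an immediate consequence of Lemma \ref{lem.5.3}, obtained by letting the columns of $\Xi$ span the primitive submodule in that lemma and taking $U$ to be the change-of-basis matrix to the basis adapted to the splitting $M=M_1'\bot M_2'$. Your verification of (1) via the Gram matrix and of (2) via $U\left(\begin{smallmatrix} O \\ 1_r \end{smallmatrix}\right)=\Xi$ fills in the routine details the paper leaves implicit.
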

\begin{lemma}
\label{lem.5.4}
Let $k$ and $r$ be positive integers such that $k \ge r$. Then for any $T \in \frkp \calh_r(\frko)$,  we have
\[\beta_{\frkp}(H_k,T)=(1-q^{-k})(1+q^{-k+r})\prod_{i=1}^{r-1}(1-q^{-2k+2i}).\]
\end{lemma}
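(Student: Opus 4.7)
The plan is to compute $\beta_{\frkp}(H_k,T)$ directly as a residue-field count, exploiting that $T$ is $\vpi$-divisible so the defining congruence collapses to a condition on $\bar X$ alone. Since $2H_k\in GL_{2k}(\frko)$, Lemma~\ref{lem.5.1}(2.2) gives
\[
\beta_{\frkp}(H_k,T) = q^{r(r+1)/2}\,\mathrm{vol}(\calb_1(H_k,T)).
\]
Because $T\in\vpi\calh_r(\frko)$, the defining condition $H_k[X]-T\in\vpi\calh_r(\frko)$ reduces to $H_k[X]\in\vpi\calh_r(\frko)$, which depends only on the reduction $\bar X\in M_{2k,r}(\frkk)$. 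Hence $\mathrm{vol}(\calb_1(H_k,T)) = q^{-2kr}N$, where $N$ is the number of primitive $\bar X\in M_{2k,r}(\frkk)$ (i.e.\ of rank $r$) whose columns span a totally isotropic subspace of $\frkk^{2k}$ for the reduction $\bar Q$ of the hyperbolic quadratic form attached to $H_k$.

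Next I would compute $N$ by selecting columns $v_1,\ldots,v_r$ one at a time. Given linearly independent totally isotropic $v_1,\ldots,v_{i-1}$, the admissible choices for $v_i$ are nonzero isotropic vectors of $\langle v_1,\ldots,v_{i-1}\rangle^\perp$ not already in the span. The quotient $\langle v_1,\ldots,v_{i-1}\rangle^\perp/\langle v_1,\ldots,v_{i-1}\rangle$ is a hyperbolic quadratic space of dimension $2(k-i+1)$ over $\frkk$, and a hyperbolic $2m$-dimensional quadratic space over $\frkk$ has exactly $(q^{m-1}+1)(q^m-1)$ nonzero isotropic vectors (a standard count, uniform in the residue characteristic). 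Lifting this count through $\langle v_1,\ldots,v_{i-1}\rangle$ and subtracting the span yields $q^{i-1}(q^{k-i}+1)(q^{k-i+1}-1)$ legal choices at step $i$, so $N = q^{r(r-1)/2}\prod_{i=1}^{r}(q^{k-i}+1)(q^{k-i+1}-1)$.

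Finally I would combine with the $q^{r(r+1)/2-2kr}$ prefactor; the $q$-exponents cancel entirely, leaving
\[
\beta_{\frkp}(H_k,T) = \prod_{j=0}^{r-1}(1-q^{j-k})(1+q^{j+1-k}).
\]
Pairing the factors $(1-q^{-k+i})(1+q^{-k+i})=1-q^{-2k+2i}$ for $i=1,\ldots,r-1$ isolates the two unpaired terms $(1-q^{-k})$ and $(1+q^{-k+r})$, giving the desired formula.

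The only nontrivial input is the uniform count of nonzero isotropic vectors in a hyperbolic quadratic space over a finite field, which is valid in every residue characteristic (including the dyadic case, where the form remains hyperbolic even though the associated bilinear form becomes alternating); the rest is elementary bookkeeping, and I do not anticipate any real obstacle.
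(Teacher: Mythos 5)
Your proposal is correct and takes essentially the same route as the paper: both reduce $\beta_{\frkp}(H_k,T)$ via Lemma \ref{lem.5.1} (2.2) to a count, over the residue field, of primitive matrices $\bar X\in M_{2k,r}(\frkk)$ whose columns span a totally isotropic subspace for the reduced hyperbolic form. The only difference is that the paper outsources this count to Kitaoka's Lemma 5.6.9, whereas you carry out the standard, characteristic-uniform isotropic-vector count column by column; your bookkeeping and the final pairing of factors are correct.
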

\begin{proof} 
By (2.2) of  Lemma \ref {lem.5.1}, we have
\begin{align*}
& \beta_{\frkp}(H_k,T) \\
&=q^{r(r+1)/2}{\rm vol}(\calb_1(H_k,T))\\
&=q^{-2k+r(r+1)/2}\#\{X \in M_{2k,r}(\frko)/\frkp M_{2k,r}(\frko) \ | \ X \in \calb_1(H_k,T)\}
\end{align*}
Thus the assertion follows from [\cite{Ki2}, Lemma 5.6.9].
\end{proof}

\bigskip
\begin{theorem} 
\label{th.5.1} Let $k,n,r$ be positive integers such that 
$k \ge (n+r)/2$ and $n \ge r$. 
 Let $B=\left(\begin{matrix} T & 2^{-1}{}^t R \\ 2^{-1}R & A \end{matrix}\right) \in \calh_n(\frko )^{\rm{nd}}$  with
$T \in \calh_{n-r}(\frko ), R \in M_{r,n-r}(\frko )$ and $A \in \calh_{r}(\frko )$. Then the limit $\alpha_{\frkp}(H_{k-r},T,\vpi R,\vpi^2 A)$ exists and 
\[\beta_{\frkp}(H_k,B[1_{n-r} \bot \vpi 1_r])^{(r)}=\beta(H_k,\vpi^2 A) \alpha_{\frkp}(H_{k-r},T,\vpi R,\vpi^2 A).\]
\end{theorem}

\begin{proof} The above theorem can be proved in the same manner as [\cite{Kat1}, Proposition 2.4]. (See also the proof of [\cite{Kat-Kaw1}, Lemma 3.2].) But for the sake of completeness we give another proof. Put $B'=B[1_{n-r} \bot \vpi 1_r]$ and 
write $X \in M_{2k,n}(\frko )$ and $Y \in {\rm Sym}_n(F)$ as $X=\left(\begin{matrix} X_1 & X_2 \end{matrix}\right)$ with $X_1 \in M_{2k,n-r}(\frko ), X_2 \in M_{2k,r}(\frko )$,
and $Y=\mattwo(Y_{11};Y_{12};{}^tY_{12};Y_{22})$ with $Y_{11} \in {\rm Sym}_{n-r}(F), Y_{22} \in {\rm Sym}_r(F)$, and
$Y_{12} \in M_{n-r,r}(F)$. For each non-negative integer $e$ and $X_2 \in M_{2k,r}^{\mathrm{prm}}(\frko )$ put
\begin{align*}
I_{X_2,e}=&\int_{{\frkp}^{-e}{\rm Sym}_{n-r}(\frko) \times {\frkp}^{-e}M_{n-r,r}(\frko)} \int_{M_{2k,n-r}(\frko )}
\psi({\rm tr}(Y_{11}(H_k[X_1]-T)))\\
 &\times \psi({\rm tr}(Y_{12}(2 {}^t\! X_2H_kX_1-\vpi R)))dX_1dY_{11}dY_{12}.
 \end{align*}
Then 
\begin{align*}
\label{eq.a}  & q^{en(n+1)/2}{\rm vol}(\calb_e(H_k,B')^{(r)}) \tag{*}\\
 = &\int_{{\frkp}^{-e}{\rm Sym}_r(\frko)}\int_{M_{2k,r}^{\mathrm{prm}}(\frko )}\psi({\rm tr}(Y_{22}( H_k[X_2]-\vpi^2 A))) I_{X_2,e}   dX_2dY_{22} 
\end{align*}
We shall show that
\[I_{\Xi,e}=q^{e(n-r)(n-r+1)/2}\mathrm{vol}(\cala_e(H_{k-r},T,\vpi R,\vpi^2A))\]
for any $\Xi  \in \calb_e(H_k,\vpi^2A)$. Let $U$ be the matrix in Corollary \ref{cor.5.2}, and write
\[U^{-1}\left(\begin{matrix} X_1 & \Xi \end{matrix}\right)=\left(\begin{matrix} Y_1  & O\\
                         Y_2  & O \\
                         Y_3  & 1_r
                                   \end{matrix}\right),\]
with $Y_1 \in M_{2k-2r,n-r}(\frko ), Y_2,Y_3 \in M_{r,n-r}(\frko )$. 
Then, by the change of variable given by 
\[M_{2k,n-r}(\frko) \ni X_1 \longrightarrow (Y_1,Y_2,Y_3)  \in M_{2k-2r,n-r}(\frko ) \times M_{r,n-r}(\frko ) \times M_{r,n-r}(\frko ),\]
we have
\begin{align*}
 I_{\Xi,e}&=\int_{{\frkp}^{-e}{\rm Sym}_{n-r}(\frko) \times {\frkp}^{-e}M_{n-r,r}(\frko)} \Bigl(\int_{M_{2k-2r,n-r}(\frko ) \times M_{r,n-r}(\frko ) \times M_{r,n-r}(\frko ) } \\
& \psi(\mathrm{tr}(Y_{11}(H_{k-r}[Y_1]+2^{-1}({}^tY_2Y_3+{}^tY_3Y_2)+H_k[\Xi Y_3]-T)))\\
&\times \psi(Y_{12}(Y_2+2H_k[\Xi]Y_3-\vpi R))dY_1dY_2dY_3\Bigr) dY_{11}dY_{12}.
\end{align*}
Since  $H_k[\Xi] \equiv \vpi^2 A \text{ mod } \vpi^e\calh_r(\frko)$, we have
\begin{align*}
 I_{\Xi,e}&=\int_{{\frkp}^{-e}{\rm Sym}_{n-r}(\frko) \times {\frkp}^{-e}M_{n-r,r}(\frko)} \Bigl(\int_{M_{2k-2r,n-r}(\frko ) \times M_{r,n-r}(\frko ) \times M_{r,n-r}(\frko ) } \\
& \psi({\rm tr}(Y_{12} W)) \psi({\rm tr}(Y_{11}g(Y_1,Y_2,Y_3))) dY_1dY_2dY_3\Bigr)dY_{11}dY_{12},
\end{align*}
where
\[g(Y_1,Y_2,Y_3)=H_{k-r}[Y_1]+2^{-1}{}^tY_2Y_3+2^{-1}{}^tY_3Y_2+\vpi^2 A[Y_3] -T,\]
and 
\[W=Y_2+2\vpi^2 AY_3 -\vpi R.\]
By the change of variables given by $(Y_1,Y_2,Y_3) \mapsto (Y_1,W,Y_3),$
we have
\begin{align*}
 I_{\Xi,e} &=\int_{{\frkp}^{-e}{\rm Sym}_{n-r}(\frko) \times {\frkp}^{-e}M_{r,n-r}(\frko)} \Bigl(\int_{M_{2k-2r,n-r}(\frko ) \times M_{r,n-r}(\frko ) \times M_{r,n-r}(\frko )}\\
&\psi({\rm tr}(Y_{11}f(Y_1,Y_3) )) \psi({\rm tr}((Y_{11}{}^tY_3+Y_{12})W))dY_1dW dY_3\Bigr) dY_{11}dY_{12},
\end{align*}
where 
\begin{align*}
f(Y_1,Y_3)&=H_{k-r}[Y_1]- \vpi^2 A[Y_3]-2^{-1}\vpi {}^tR Y_3-2^{-1}\vpi {}^tY_3R-T\\
&=H_{k-r}[Y_1]-B'\left[\begin{pmatrix} 1_{n-r} \\ Y_3 \end{pmatrix}\right].
\end{align*}
Put $Z_{12}=Y_{11}\,{}^t Y_3+Y_{12}$.
Then we have
\begin{align*}
 I_{\Xi,e}&=\int_{{\frkp}^{-e}{\rm Sym}_{n-r}(\frko) } \int_{M_{2k-2r,n-r}(\frko )  \times M_{r,n-r}(\frko )}
\psi({\rm tr}(Y_{11}f(Y_1,Y_3))) dY_1dY_3dY_{11}\\
& \times  \int_{{\frkp}^{-e} M_{n-r,r}(\frko)} \int_{M_{r,n-r}(\frko )}  \psi({\rm tr}(Z_{12}W))dZ_{12}dW .
\end{align*}
We have
\[\int_{{\frkp}^{-e} M_{n-r,r}(\frko)} \int_{M_{r,n-r}(\frko )}  \psi({\rm tr}(Z_{12}W))dZ_{12}dW =1,\]
and hence
\[I_{\Xi,e}= q^{e(n-r)(n-r+1)/2} {\rm vol}(\cala_e(H_{k-r},T,\vpi R,\vpi^2 A))\]
for any $\Xi \in  M_{2k,r}^{\mathrm{prm}}(\frko )$. Hence by (\ref{eq.a}) we have
\begin{align*}
 &q^{en(n+1)/2}{\rm vol}(\calb_e(H_k,B')^{(r)} \\
=&q^{er(r+1)/2} {\rm vol}(\calb_e(H_k,\vpi^2 A)) q^{e(n-r)(n-r+1)/2} {\rm vol}(\cala_e(H_{k-r},T,\vpi R,\vpi^2 A)).
\end{align*}
By (2.1) of Lemma \ref{lem.5.1}, the limit $\beta_{\frkp}(H_k,B')^{(r)}$ exists,  and by Lemma \ref{lem.5.4}, the limit $\beta_{\frkp}(H_k,\vpi^2A)$ exists and non-zero. Hence, the assertion holds.
\end{proof}

\bigskip

\begin{remark} (1) The existence of $\alpha_\frkp(H_{k-r},T,\vpi R,\vpi^2A)$ does not follow from (1) of Lemma \ref{lem.5.1}
 because the condition there does not necessarily holds.

 (2) Let $F=\QQ_p$ and $R=O$. Then the above theorem is nothing but [\cite {Kat1}, Proposition 2.4].
\end{remark}

\bigskip
\begin{definition}
\label{def.5.3}
Let $T=(t_{ij}) \in \calh_n(\frko )^{\rm{nd}}$ and $n_1$ be a positive integer such that $n_1 \le n$.  Then
put $R_T^{(n_1)}=(t_{ij})_{n-n_1+1 \le i \le n,1 \le j \le n-n_1}$ and $A_T^{(n_1)}=(t_{ij})_{n-n_1+1 \le i,j \le n}$, 
and for $x \in M_{n_1,n-n_1}(\frko )$ put
\[T_x=T^{(n-n_1)}(\vpi R_T^{(n_1)},\vpi^2 A_T^{(n_1)},x),\]
where $T^{(n-n_1)}(\vpi R_T^{(n_1)},\vpi^2 A_T^{(n_1)},x)$ is the matrix in Definition \ref{def.5.1}.
Here we make the convention that $T_x$ is the empty matrix if $n_1=n$.
\end{definition}
Clearly $T_{O}=T^{(n-n_1)}$ for the  zero matrix $O$ of type  $n_1 \times (n-n_1)$. We note that 
\[T_x=T\left[\left(\begin{matrix} 1_{n-n_1} \\ \vpi x \end{matrix}\right)\right]. \]

\bigskip
\begin{theorem} 
\label{th.5.2}
 Let $B=(b_{ij})$ be an element of $\calh_n(\frko )^{{\rm nd}}$.
\begin{itemize} 
\item[(1)] Assume that there is a positive integer $l_0$ such that  
\[\ord(\det (2B_x)) \le l_0 \text{  for any } x \in M_{1,n-1}(\frko).\]
Then there is a positive integer $m_0$ such that 
\begin{align*}
& \alpha_{\frkp}(H_k, B[1_{n-1} \bot \vpi])- q^{n-2k+1}\alpha_{\frkp}(H_k,B) \\
&=\beta_{\frkp}(H_k, \vpi^2 b_{nn})\alpha_{\frkp}(H_{k-1},B^{(n-1)},(\vpi b_{n,1},\ldots, \vpi b_{n,n-1}),\vpi^2 b_{nn}) \\
& =q^{-m_0(n-1)}(1-q^{-k})(1+q^{-k+1})\sum_{x \in M_{1,n-1}(\frko )/\frkp^{m_0}  M_{1,n-1}(\frko )} \alpha_{\frkp}(H_{k-1},B_x).
\end{align*}
Here we understand that $\alpha_{\frkp}(H_{k-1},B_x)=1$ if $n=1$.
\item[(2)] Assume that there is a positive integer $l_0$ such that   
\[\ord(\det (2B_x)) \le l_0 \text{  for any } x \in M_{2,n-2}(\frko ).\] 
Then there is a positive integer $m_0$ such that 
\begin{align*}
& \alpha_{\frkp}(H_k,B[1_{n-2} \bot \vpi 1_2]) +q^{2n-4k+3}\alpha_{\frkp}(H_k,B) \\
& -q^{n-2k+1}\sum_{W \in {\bf D}_{2,1}/GL_2(\frko)} \alpha_{\frkp}(H_k,B[1_{n-2} \bot W]) \\
&=\beta_{\frkp}(H_k,\vpi^2 A)\alpha_{\frkp}(H_{k-2},B^{(n-2)},\vpi R,\vpi^2 A) \\
&=q^{-2m_0(n-2)}(1-q^{-k})(1+q^{-2k+2})(1+q^{-k+2})\\
& \times \sum_{x \in M_{2,n-2}(\frko )/\frkp^{m_0}  M_{2,n-2}(\frko )} \alpha_{\frkp}(H_{k-2},B_x),
\end{align*}
where $R=(b_{ij})_{n-1 \le i \le n, 1 \le j \le n-2}, A=\mattwo(b_{n-1,n-1};b_{n-1,n};b_{n-1,n};b_{nn})$. 
Here we understand that $\alpha_{\frkp}(H_{k-2},B_x)=1$ if $n=2$.
\end{itemize}
 \end{theorem}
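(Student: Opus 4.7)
The plan is to derive both parts from four ingredients already in place: (i) Corollary \ref{cor.5.1}, which expresses $\beta_{\frkp}^{(l)}$ as an alternating $\pi_l$-weighted sum of ordinary local densities; (ii) Theorem \ref{th.5.1}, which evaluates $\beta_{\frkp}(H_k,B[1_{n-l}\bot\vpi1_l])^{(l)}$ as the product of $\beta_{\frkp}(H_k,\vpi^2 A)$ and a modified local density; (iii) Lemma \ref{lem.5.4}, which evaluates that primitive factor; and (iv) Lemma \ref{lem.5.1}(1), which discretizes the modified local density into a finite sum over cosets $M_{l,n-l}(\frko)/\vpi^{m_0}M_{l,n-l}(\frko)$. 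The hypothesis $\ord(\det 2B_x)\le l_0$ is precisely the bound needed for (iv) after one observes that, in the notation of Definition \ref{def.5.1}, $T(\vpi R_B^{(n_1)},\vpi^2 A_B^{(n_1)},x)$ coincides with $B_x$ of Definition \ref{def.5.3}, so $m_0=2l_0+1$ works.

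For Part (1), set $B_1=B[1_{n-1}\bot\vpi]$ and apply Corollary \ref{cor.5.1} with $l=1$, $A=H_k$, to $B_1$. The function $\pi_1$ is supported on $\mathbf{D}_{1,0}=GL_1(\frko)$ with value $1$ and on $\mathbf{D}_{1,1}=\vpi GL_1(\frko)$ with value $-1$; choosing representatives $W=1$ and $W=\vpi$ yields only two nonzero contributions, namely $\alpha_{\frkp}(H_k,B_1)$ and $-q^{-2k+n+1}\alpha_{\frkp}(H_k,B_1[1_{n-1}\bot\vpi^{-1}])=-q^{n-2k+1}\alpha_{\frkp}(H_k,B)$. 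Hence the left-hand side of the first displayed identity in (1) is exactly $\beta_{\frkp}(H_k,B_1)^{(1)}$. Applying Theorem \ref{th.5.1} with $r=1$ identifies this with $\beta_{\frkp}(H_k,\vpi^2 b_{nn})\cdot\alpha_{\frkp}(H_{k-1},B^{(n-1)},\vpi R_B^{(1)},\vpi^2 b_{nn})$, proving the first equality. Lemma \ref{lem.5.4} (with $r=1$) gives $\beta_{\frkp}(H_k,\vpi^2 b_{nn})=(1-q^{-k})(1+q^{-k+1})$, and Lemma \ref{lem.5.1}(1) expands the modified local density as $q^{-m_0(n-1)}\sum_{x}\alpha_{\frkp}(H_{k-1},B_x)$, yielding the second equality.

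Part (2) follows the same template with $l=2$ and $B_2=B[1_{n-2}\bot\vpi 1_2]$, but now $\pi_2$ is supported on three double cosets $\mathbf{D}_{2,0},\mathbf{D}_{2,1},\mathbf{D}_{2,2}$ with values $1,-1,q$ respectively. The $\mathbf{D}_{2,0}$-contribution (take $W=1_2$) is $\alpha_{\frkp}(H_k,B_2)$. The $\mathbf{D}_{2,2}$-contribution (take $W=\vpi 1_2$) is $q\cdot q^{2(n-2k+1)}\alpha_{\frkp}(H_k,B_2[1_{n-2}\bot\vpi^{-1}1_2])=q^{2n-4k+3}\alpha_{\frkp}(H_k,B)$. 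For the middle orbit, the key observation is that $W\mapsto\vpi W^{-1}$ is an involution of $\mathbf{D}_{2,1}$ that intertwines left and right multiplication by $GL_2(\frko)$; substituting $W'=\vpi W^{-1}$ converts the Corollary \ref{cor.5.1}-sum over $GL_2(\frko)\backslash\mathbf{D}_{2,1}$ (which involves $B_2[1_{n-2}\bot W^{-1}]=B[1_{n-2}\bot\vpi W^{-1}]$) into a sum over $\mathbf{D}_{2,1}/GL_2(\frko)$ of $\alpha_{\frkp}(H_k,B[1_{n-2}\bot W'])$, with coefficient $-q^{n-2k+1}$. Combining the three contributions reproduces the left-hand side of the displayed identity. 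Theorem \ref{th.5.1} with $r=2$, Lemma \ref{lem.5.4} with $r=2$, and Lemma \ref{lem.5.1}(1) with $l=2$ together deliver the second equality.

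The main technical obstacle is the bookkeeping in Part (2), specifically establishing the bijection between the left-coset sum produced by Corollary \ref{cor.5.1} on $\mathbf{D}_{2,1}$ and the right-coset sum $\sum_{W\in\mathbf{D}_{2,1}/GL_2(\frko)}$ appearing in the statement, via the involution $W\mapsto\vpi W^{-1}$. The remaining steps are routine given Theorem \ref{th.5.1} and Lemmas \ref{lem.5.1}, \ref{lem.5.4}; in particular, the uniform bound $\ord(\det 2 B_x)\le l_0$ in the hypotheses is exactly what is required to legitimize the discretization in Lemma \ref{lem.5.1}(1).
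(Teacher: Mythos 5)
Your proposal is correct and follows essentially the same route as the paper: the authors obtain both identities by combining Corollary \ref{cor.5.1} (the $\pi_l$-weighted coset sum), Theorem \ref{th.5.1}, Lemma \ref{lem.5.4}, and the discretization in Lemma \ref{lem.5.1}(1), merely leaving implicit the double-coset bookkeeping you spell out and taking $m_0=\max(\ord(\det 2B)+1,c_{l_0})$ via Lemma \ref{lem.5.1}(2.3) rather than your $2l_0+1$. Note only that Lemma \ref{lem.5.4} with $r=2$ produces the factor $(1-q^{-2k+2})$, exactly as in the paper's own proof and in Corollary \ref{cor.5.3}, so the $(1+q^{-2k+2})$ appearing in the theorem's display is a typo rather than a discrepancy in your argument.
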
               

\bigskip

{\it Proof.} By Lemma \ref{lem.5.4}, for $a \in \frko $, we have
\[\beta_\frkp(H_k,\vpi^2 a)= (1-q^{-k})(1+q^{-k+1})\]
and for $A \in \calh_2(\frko )$.
We note that 
\[\frkl_{B_x} \le \ord(\det (2B_x)) \le l_0 \text{ for any } x \in M_{1,n-1}(\frko). \]
Hence, by (2.3) of Lemma \ref{lem.5.1}, there is a positive integer $c_{l_0}$ depending on $l_0$ satisfying the following condition:

If $B_{x'} \equiv B_x \text{ mod } \frkp^{c_{l_0}} S_{n-1}(\frko)$ for $x, x' \in M_{1,n-1}(\frko)$, then $B_{x'} \sim B_x$.

Put $m_0=\max(\ord(\det 2B)+1, c_{l_0})$. Then, by Lemma \ref{lem.5.1},  Corollary \ref{cor.5.1}, and Theorem \ref{th.5.1}, we see that $m_0$ satisfies the required condition in (1).  We also have
\[\beta_\frkp(H_k,\vpi^2 A)=(1-q^{-k})(1-q^{-2k+2})(1+q^{-k+2}).\]
Hence the assertion (2)  can be proved similarly.

\bigskip
\begin{corollary} 
\label{cor.5.3}
Let $B$ be as above. 
\begin{itemize}
\item[(1)] Let the notation and the assumption be as in (1) of Theorem \ref{th.5.2}.  Then 
\begin{align*}
F(B[1_{n-1} \bot \vpi],X) &=q^{n+1}X^2F(B,X)\\
&+q^{-m_0(n-1)}\sum_{x \in M_{1,n-1}(\frko )/\frkp^{m_0}  M_{1,n-1}(\frko )} K(X,x)F(B_x,qX),
\end{align*}
where
\[K(X,x)={(1-X)(1+qX)\gamma_q(B_{x},qX) \over \gamma_q(B,X)}.\]
Here we understand that $F(B_x,qX)=1$ and $\gamma_q(B_x,qX)=1$ if $n=1$.
\item[(2)] Let the notation and the assumption be as in (2) of Theorem \ref{th.5.2}. Then 
\begin{align*}
& F(B[1_{n-2} \bot \vpi 1_2],X) \\
& = -q^{2n+3}X^4 F(B,X)+q^{n+1}X^2\sum_{W \in {\bf D}_{2,1}/GL_2(\frko)} F(B[1_{n-2} \bot W],X) \\
& +q^{-2m_0(n-2)}\sum_{x \in M_{2,n-2}(\frko )/\frkp^{m_0}  M_{2,n-2}(\frko )} K(X,x)F(B_x,q^2X),
\end{align*}
where 
\[K(X,x)={(1-X)(1-q^2X^2)(1+q^2X)\gamma_q(B_{x},q^2X) \over \gamma_q(B,X)}.\]
Here we understand that $F(B_x,q^2X)=1$ and $\gamma_q(B_x,q^2X)=1$ if $n=2$.
\end{itemize}
\end{corollary}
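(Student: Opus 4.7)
My plan is to deduce Corollary~\ref{cor.5.3} directly from Theorem~\ref{th.5.2} by translating the local density identities into polynomial identities in $X$ via the substitution $X = q^{-k}$, using the relation $\alpha_\frkp(H_k,B) = b_\frkp(B,k) = F(B,q^{-k})\gamma_q(B,q^{-k})$ valid for $k \geq n$.

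The first step is to record a stability property of $\gamma_q$: it depends only on the degree and on $\xi$. For $B' = B[1_{n-1} \bot \vpi]$ one has $\det B' = \vpi^2 \det B$; for $B'' = B[1_{n-2} \bot \vpi 1_2]$ one has $\det B'' = \vpi^4 \det B$; and for $W \in {\bf D}_{2,1}$ one has $\det(B[1_{n-2} \bot W]) = u^2 \vpi^2 \det B$ with $u \in \frko^\times$. In each case $D$ is multiplied by a square, so $\xi$ is preserved, and since the degree is also unchanged, $\gamma_q(B',X) = \gamma_q(B'',X) = \gamma_q(B[1_{n-2} \bot W],X) = \gamma_q(B,X)$. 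Consequently, after dividing the identity of Theorem~\ref{th.5.2} through by $\gamma_q(B,X)$, every term involving a degree-$n$ matrix collapses cleanly to $F(\cdot,X)$.

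Next I would substitute $X = q^{-k}$ and rewrite the powers of $q$: in part~(1), $q^{n-2k+1} = q^{n+1} X^2$, while the factor $(1-q^{-k})(1+q^{-k+1}) = \beta_\frkp(H_k,\vpi^2 b_{nn})$ becomes $(1-X)(1+qX)$. In part~(2), $q^{2n-4k+3} = q^{2n+3} X^4$, $q^{n-2k+1} = q^{n+1} X^2$, and by Lemma~\ref{lem.5.4} one has $\beta_\frkp(H_k,\vpi^2 A) = (1-q^{-k})(1+q^{-k+2})(1-q^{-2k+2}) = (1-X)(1+q^2 X)(1-q^2 X^2)$. For the sum over $x$, the matrix $B_x$ has degree $n-1$ in part~(1) and $n-2$ in part~(2), so the corresponding local densities $\alpha_\frkp(H_{k-1},B_x)$ and $\alpha_\frkp(H_{k-2},B_x)$ become $F(B_x,qX)\gamma_q(B_x,qX)$ and $F(B_x,q^2 X)\gamma_q(B_x,q^2 X)$ respectively; here the $\gamma_q$-factor on $B_x$ \emph{cannot} be cancelled against $\gamma_q(B,X)$ because the degree has dropped, so it remains packaged inside the kernel $K(X,x)$, reproducing exactly the formulas in the statement.

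Since the resulting identities hold at $X = q^{-k}$ for every sufficiently large $k$, and both sides are rational functions in $X$, they hold as identities of rational functions, and hence, upon multiplying out by the common denominator, as identities of Laurent polynomials in $X$. There is no essential obstacle; the only bookkeeping to watch is the degenerate cases $n=1$ in part~(1) and $n=2$ in part~(2), where $B_x$ is the empty matrix and the conventions $F(B_x,\cdot) = 1$, $\gamma_q(B_x,\cdot) = 1$, $\alpha_\frkp(H_{k-1},B_x) = 1$ (resp.\ $\alpha_\frkp(H_{k-2},B_x) = 1$) match on both sides of the translation.
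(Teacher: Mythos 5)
Your proposal is correct and is exactly the argument the paper leaves implicit: Corollary \ref{cor.5.3} is obtained from Theorem \ref{th.5.2} by setting $X=q^{-k}$, using $\alpha_{\frkp}(H_k,\cdot)=b_{\frkp}(\cdot,k)=F(\cdot,q^{-k})\gamma_q(\cdot,q^{-k})$ for $k\ge n$, noting that $\gamma_q$ is unchanged for $B[1_{n-1}\bot\vpi]$, $B[1_{n-2}\bot\vpi 1_2]$ and $B[1_{n-2}\bot W]$ (determinant altered by a square, degree fixed), inserting the $\beta$-values from Lemma \ref{lem.5.4}, and extending from infinitely many specializations $X=q^{-k}$ to an identity of rational functions. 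Your handling of the kernels $K(X,x)$, the powers of $q$, and the degenerate cases $n=1$, $n=2$ all match the statement, so nothing further is needed.
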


\bigskip

Let $C(e,\widetilde e,\xi;Y,X)$ and $D(e,\widetilde e,\xi;Y,X)$ be rational functions in $X^{1/2}$ and $Y^{1/2}$ defined in Definition \ref{def.4.2}.
We often write 
\begin{align*}
C(e,\widetilde e,\xi;X)=&C(e,\widetilde e,\xi;q^{1/2},X),\\
D(e,\widetilde e,\xi;X)=&D(e,\widetilde e,\xi;q^{1/2},X)
\end{align*}
if there is no fear of confusion.  From now on we make the convention that we have $\widetilde F(A,X)=1$ if $\deg A=0$.

\bigskip
\begin{theorem} 
\label{th.5.3}
Let the notation and the assumption be as in (1) of Theorem \ref{th.5.2}. Put ${\frke}=\frke_B$ and $\widetilde{\frke}_x=\frke_{B_x}$ for $x \in M_{1,n-1}(\frko)$. 
\begin{itemize}
\item[(1)] Let $n$ be odd, and put $\xi_x=\xi_{B_x}$. Then we have
\begin{align*}
& \widetilde F(B,X) \\
&=q^{-m_0(n-1)}\sum_{x \in M_{1,n-1}(\frko )/\frkp^{m_0}  M_{1,n-1}(\frko )}\Bigl\{D(\frke,\widetilde {\frke}_x,\xi_x;X)\widetilde F(B_x,q^{1/2}X)\\
&+\eta_B D({\frke},\widetilde {\frke}_x,\xi_x;X^{-1})\widetilde F(B_x,q^{1/2}X^{-1})\Bigr\}.
\end{align*}
Here we make the convention that
$ \xi_x=1, \widetilde \frke_x=0$ and $\widetilde F(B_x,q^{1/2}X)=\widetilde F(B_x,q^{1/2}X^{-1})=1$ if $n=1$.
\item[(2)] Let $n$ be even. Then we have
\begin{align*}
& \widetilde F(B,X) \\
&=q^{-m_0(n-1)}\sum_{x \in M_{1,n-1}(\frko )/\frkp^{m_0}  M_{1,n-1}(\frko )}\Bigl\{C({\frke},\widetilde {\frke}_x,\xi_B;X)\widetilde F(B_x,q^{1/2}X)\\
&+C({\frke},\widetilde {\frke}_x,\xi_B;X^{-1})\widetilde F(B_x,q^{1/2}X^{-1})\Bigr\}.
\end{align*}
\end{itemize}
\end{theorem}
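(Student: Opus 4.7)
The plan is to derive the formula by combining Corollary \ref{cor.5.3}(1) with the functional equation (Proposition \ref{prop.2.1}). Set $B'=B[1_{n-1}\bot\vpi]$. A direct check shows $\frke_{B'}=\frke_B+2$ in both parities (since $D_{B'}=\vpi^2 D_B$ is a $\vpi^2$-multiple of $D_B$, which leaves $\frkD_{B'}=\frkD_B$ when $n$ is even and just shifts the valuation by $2$ when $n$ is odd), and moreover $\zeta_{B'}=\zeta_B$: for even $n$ we get $\xi_{B'}=\xi_B$ because $D_{B'}/D_B$ is a square, and for odd $n$ the Clifford invariant formula shows $\eta_{B'}=\eta_B$ because rescaling a single coordinate by $\vpi$ multiplies $\varepsilon_B$ by Hilbert symbols of the form $\langle \cdot,\vpi^2\rangle=1$.

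I would then multiply the identity of Corollary \ref{cor.5.3}(1) by $X^{-\frke_{B'}/2}$ and substitute $X\mapsto q^{-(n+1)/2}X$ to replace each $F(\cdot,\cdot)$ by the corresponding $\widetilde F(\cdot,\cdot)$. Using $\frke_{B'}=\frke_B+2$, this gives an identity of the shape
\[
\widetilde F(B',X)=X\,\widetilde F(B,X)+S(X),\qquad S(X)=q^{-m_0(n-1)}\sum_{x} a_x(X)\,\widetilde F(B_x,q^{1/2}X),
\]
with an explicit rational coefficient $a_x(X)$. Applying Proposition \ref{prop.2.1} to $B'$, together with the corresponding functional equation for $B$, one then substitutes $X\mapsto X^{-1}$ and obtains a second identity $\widetilde F(B',X)=X^{-1}\,\widetilde F(B,X)+\zeta_B\,S(X^{-1})$. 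Subtracting and solving yields
\[
\widetilde F(B,X)=\frac{\zeta_B\,S(X^{-1})-S(X)}{X-X^{-1}}=\frac{1}{X^{-1}-X}\Bigl(S(X)-\zeta_B\,S(X^{-1})\Bigr),
\]
so the proof reduces to identifying the single building block $a_x(X)/(X^{-1}-X)$ with the prescribed $D$- or $C$-function, with $\zeta_B$ supplying the factor $\eta_B$ in the odd-$n$ case and collapsing to the symmetric form in the even-$n$ case.

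The main obstacle is this final algebraic identification, where the ratio of $\gamma_q$'s and the extra factors $(1-q^{-(n+1)/2}X)(1+q^{-(n-1)/2}X)$ in $K$ must telescope cleanly. Plugging in the explicit formula for $\gamma_q$ from Section 2, the products $\prod_i(1-q^{2i-n-1}X^2)$ and $\prod_i(1-q^{2i-n+1}X^2)$ cancel almost entirely, leaving only the extremal factor $1-X^2$, which combines with $X^{-1}-X=(1-X^2)/X$. The one surviving asymmetric factor is the ramification factor from the $\gamma_q$ attached to the even-degree form: in the odd-$n$ case it comes from $\gamma_q(B_x,\cdot)$ and produces $(1-\xi_{B_x}X)^{-1}$, matching $D(\frke,\widetilde\frke_x,\xi_x;q^{1/2},X)$; in the even-$n$ case it comes from $\gamma_q(B,\cdot)$ and produces $(1-\xi_B\,q^{-1/2}X)$ in the numerator, matching $C(\frke,\widetilde\frke_x,\xi_B;q^{1/2},X)$. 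The surviving powers of $q$ and $X$ reassemble, using $\widetilde\frke_x=\frke_{B_x}$, into $q^{\widetilde\frke_x/4}X^{-(\frke-\widetilde\frke_x)/2}$, exactly the prefactor of Definition \ref{def.4.2}.

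Finally, for $n=1$ the right-hand side collapses to $D(\frke_B,0,1;X)+\eta_B D(\frke_B,0,1;X^{-1})$, and a short computation shows this equals $\sum_{i=0}^{r_1}X^{i-r_1/2}$ using $\eta_B=1$ for $n=1$, which agrees with $\widetilde F(B,X)$ in the rank-one case; this supplies the base of the induction implicit in the conventions stated at the end of the theorem.
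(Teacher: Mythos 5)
Your proposal is correct and follows essentially the same route as the paper's own proof: convert Corollary \ref{cor.5.3}(1) into an identity $\widetilde F(B[1_{n-1}\bot\vpi],X)=X\widetilde F(B,X)+S(X)$, pair it with its $X\mapsto X^{-1}$ counterpart via Proposition \ref{prop.2.1} (using $\frke_{B'}=\frke_B+2$, $\zeta_{B'}=\zeta_B$ for $B'=B[1_{n-1}\bot\vpi]$), eliminate $\widetilde F(B',X)$, and identify the resulting coefficients with the $D$- (odd $n$) and $C$- (even $n$) functions after the $\gamma_q$-cancellation, exactly as in the paper. The algebraic identifications you describe (the surviving factors $(1-\xi_{B_x}X)^{-1}$, $(1-q^{-1/2}\xi_B X)$, and the prefactor $q^{\widetilde\frke_x/4}X^{-(\frke-\widetilde\frke_x)/2}$) check out, so no gap.
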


\bigskip

{\it Proof.} Put $B'=B[1_{n-1} \bot \vpi]$. Let $n$ be odd. Then, by Corollary \ref{cor.5.3},  we have
\begin{align*}
& \widetilde F(B',X)=X\widetilde F(B,X) \\
&+q^{-m_0(n-1)}\sum_{x \in M_{1,n-1}(\frko )/\frkp^{m_0}  M_{1,n-1}(\frko )} {1-X^2 \over 1- \xi_x X}q^{\widetilde \frke_x/4}X^{(-\frke+\widetilde \frke_x)/2-1}\widetilde F(B_x,q^{1/2}X).
\end{align*}
We also have
\begin{align*}
& \widetilde F(B',X^{-1})=X^{-1}\widetilde F(B,X^{-1}) \\
&+ q^{-m_0(n-1)}\sum_{x \in M_{1,n-1}(\frko )/\frkp^{m_0}  M_{1,n-1}(\frko )}{1-X^{-2} \over 1-q^{-1} \xi_x X^{-1}}q^{\widetilde \frke_x/4}X^{(\frke-\widetilde \frke_x)/2+1}\widetilde F(B_x,q^{1/2}X^{-1}).
\end{align*}
By Proposition \ref{prop.2.1} we have $\widetilde F(B',X^{-1})=\eta_B\widetilde F(B',X)$ and $\widetilde F(B,X^{-1})=\eta_B\widetilde F(B,X)$. This proves the assertion. 

Let $n$ be even.  
Then, by (2) of Corollary \ref{cor.5.3}, we have
\begin{align*}
& \widetilde F(B',X)=X\widetilde F(B,X) \\
&+ q^{-m_0(n-1)}\sum_{x \in M_{1,n-1}(\frko )/\frkp^{m_0}  M_{1,n-1}(\frko )}(1-q^{-1/2} \xi X)q^{\widetilde \frke_x/4}X^{(-\frke+\widetilde \frke_x)/2-1}\widetilde F(B_x,q^{1/2}X).
\end{align*}
We also have
\begin{align*}
 \widetilde F(B',X^{-1})&=X^{-1}\widetilde F(B,X^{-1})\\
& + q^{-m_0(n-1)}\sum_{x \in M_{1,n-1}(\frko )/\frkp^{m_0}  M_{1,n-1}(\frko )}(1-q^{-1/2} \xi X^{-1})\\
&\times q^{\widetilde \frke_x/4}X^{(\frke-\widetilde \frke_x)/2+1}\widetilde F(B_x,q^{1/2}X^{-1}).
\end{align*}
By Proposition \ref{prop.2.1} we have $\widetilde F(B',X^{-1})=\widetilde F(B',X)$ and $\widetilde F(B,X^{-1})=\widetilde F(B,X)$. This proves the assertion.

\bigskip
\begin{theorem} 
\label{th.5.4}
Let $B$ be an element of $\calh_n(\frko )$. Assume that $B^{(n-2)}$ is non-degenerate and that
 $\widetilde F(B_y,X)=\widetilde F(B^{(n-2)},X)$ for any $y \in M_{2,n-2}(\frko )$. 
Put $\frke=\frke_B$ and $\hat \frke=\frke_{B^{(n-2)}}$. 
\begin{itemize}
\item[(1)] Let $n$ be an even integer such that $n \ge 4$, and assume that $\xi_B=\xi_{B^{(n-2)}}=0$.  
\begin{itemize}
\item[(1.1)] We have
\begin{align*}
&\widetilde F(B[1_{n-2} \bot \vpi 1_2],X)=q \widetilde F(B,X)\\
&+{q^{\hat \frke/2} \over X^{-1}-X} \Bigl\{X^{(\hat \frke-\frke)/2-3}\widetilde F(B^{(n-2)},qX)(1-qX^2)\\
&-X^{(-\hat \frke+\frke)/2+3}\widetilde F(B^{(n-2)},qX^{-1})(1-qX^{-2})\Bigr\}.
\end{align*}
\item[(1.2)] Assume that $B^{(n-1)}$ is non-degenerate  and that $\ord(\det B_x)=\ord(\det B^{(n-1)})$ for any $x \in M_{1,n-1}(\frko )$. Put $\widetilde \frke=\frke_{B^{(n-1)}}$. Then 
\begin{align*}
& \widetilde F(B,X)(X^{(-\frke+2\widetilde\frke- \hat\frke)/2-2} -X^{(\frke-2\widetilde\frke+ \hat\frke)/2 +2})\\
& =\widetilde F(B[1_{n-1} \bot \vpi],X)(X^{(-\frke+2\widetilde\frke- \hat\frke)/2-1}
-X^{(\frke-2\widetilde\frke+ \hat\frke)/2+1}) \\
&+q^{\hat \frke/2}X^{\hat \frke-\widetilde \frke}\widetilde F(B^{(n-2)},qX)
-q^{\hat \frke/2}X^{-\hat \frke+\widetilde \frke}\widetilde F(B^{(n-2)},qX^{-1}).
\end{align*}
\end{itemize}

\item[(2)] Let $n$ be an odd integer such that $n \ge 3$.  Then 
\begin{align*}
& \widetilde F(B,X)=(q(X^{-1}+X))^{-1}\Bigl\{ \sum_{W \in {\bf D}_{2,1}/GL_2(\frko )}\widetilde F(B[1_{n-2} \bot W],X)  \\
& -q^{\hat \frke/2}X^{-(\frke-\hat \frke)/2-1}\widetilde F(B^{(n-2)},qX)-\eta_Bq^{\hat \frke/2}X^{(\frke-\hat \frke)/2+1}  \widetilde F(B^{(n-2)},qX^{-1}) \Bigr\}.
\end{align*}
\end{itemize}
\end{theorem}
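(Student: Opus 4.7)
The plan is to start from the induction formula of Corollary~\ref{cor.5.3}(2), translate every $F$ into $\widetilde F$ via $X \mapsto q^{-(n+1)/2}X$ with the rescaling by $X^{-\frke_{\bullet}/2}$, collapse the inner sum over $x\in M_{2,n-2}(\frko)/\frkp^{m_0}M_{2,n-2}(\frko)$ using the hypothesis $\widetilde F(B_y, X) = \widetilde F(B^{(n-2)}, X)$, and then apply the functional equation of Proposition~\ref{prop.2.1} at $X$ and $X^{-1}$ to extract each assertion. The core computation is the simplification of the kernel $K(X, x)$: using the explicit formula for $\gamma_q$ together with $\xi_B = \xi_{B^{(n-2)}} = 0$ in case (1), the ratio $\gamma_q(B_x, q^2 X)/\gamma_q(B, X)$ telescopes and one finds $K(X,x) = 1 - q^{n+1}X^2$ for $n$ odd and $K(X,x) = 1 - q^{n+2}X^2$ for $n$ even, independent of $x$. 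After the $X$-substitution these become $1-X^2$ and $1-qX^2$ respectively; the hypothesis pulls $(qX)^{\hat\frke/2}\widetilde F(B^{(n-2)}, qX)$ out of the $x$-sum, and the residue cardinality $q^{2m_0(n-2)}$ cancels the Corollary~\ref{cor.5.3}(2) prefactor. Combined with the discriminant shifts $\frke_{B[1_{n-2}\bot\vpi 1_2]} = \frke + 4$ and $\frke_{B[1_{n-2}\bot W]} = \frke + 2$, one arrives at the master identity
\begin{equation*}
\widetilde F(B[1_{n-2}\bot\vpi 1_2], X) + qX^2 \widetilde F(B, X) = X \sum_{W\in {\bf D}_{2,1}/GL_2(\frko)}\widetilde F(B[1_{n-2}\bot W], X) + R(X),
\end{equation*}
with $R(X) = q^{\hat\frke/2} X^{-(\frke-\hat\frke)/2 - 2}(1-X^2)\widetilde F(B^{(n-2)}, qX)$ in case (2), and the same formula with $1-qX^2$ in place of $1-X^2$ in case (1).

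In case (2), $n$ is odd and $\widetilde F(A, X^{-1}) = \eta_A \widetilde F(A, X)$, with $\eta_{B[1_{n-2}\bot W]} = \eta_{B[1_{n-2}\bot\vpi 1_2]} = \eta_B$ since the determinants involved differ from $\det B$ by even powers of $\vpi$ times unit squares. Subtracting the $X \mapsto X^{-1}$ version of the master identity, multiplied by $\eta_B$, from the original eliminates $\widetilde F(B[1_{n-2}\bot\vpi 1_2], X)$; the coefficient of $\widetilde F(B, X)$ becomes $q(X^2 - X^{-2}) = q(X-X^{-1})(X+X^{-1})$, and the $(1-X^2)$ factor in $R$ lets $R(X) - \eta_B R(X^{-1})$ equal $-(X-X^{-1}) \cdot q^{\hat\frke/2}\{X^{-(\frke-\hat\frke)/2-1}\widetilde F(B^{(n-2)}, qX) + \eta_B X^{(\frke-\hat\frke)/2+1}\widetilde F(B^{(n-2)}, qX^{-1})\}$. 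Dividing by $X - X^{-1}$ yields (2).

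In case (1.1), $n$ is even, every relevant $\zeta$ equals $+1$, and each $\widetilde F$ is $X\leftrightarrow X^{-1}$ invariant; the direct subtraction of master identities does not eliminate $\sum_W$, which is already symmetric. Instead, apply Corollary~\ref{cor.5.3}(1) in two consecutive steps to $B$, first scaling the $n$-th coordinate to obtain $B[1_{n-1}\bot\vpi]$ and then, after a permutation of the last two coordinates, scaling again to produce $B[1_{n-2}\bot\diag(\vpi, 1)]$, in order to express each $\widetilde F(B[1_{n-2}\bot W], X)$ in terms of $\widetilde F(B, X)$ and $\widetilde F(B_y, \cdot)$. The hypothesis collapses the inner sums at each step, and combining with the master identity replaces $\sum_W$ by explicit terms in $\widetilde F(B, X)$ and $\widetilde F(B^{(n-2)}, q^{\pm 1}X)$; the resulting identity, after using the $(1-qX^2)$ factor in $R$, is (1.1). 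For (1.2), chain (1.1) with the single-row reduction of Theorem~\ref{th.5.3}(2) (whose hypothesis $\ord(\det B_x) = \ord(\det B^{(n-1)})$ for $x\in M_{1,n-1}(\frko)$ is precisely the one assumed in (1.2)) and with Theorem~\ref{th.5.3}(1) applied to $B^{(n-1)}$, eliminating the intermediate $\widetilde F(B[1_{n-1}\bot\vpi], X)$ and $\widetilde F(B^{(n-1)}, \cdot)$ terms to isolate $\widetilde F(B, X)$ with the coefficient $X^{(-\frke+2\widetilde\frke-\hat\frke)/2-2} - X^{(\frke-2\widetilde\frke+\hat\frke)/2+2}$ displayed in (1.2).

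The main obstacle is the treatment of $\sum_W$ in case (1.1): the symmetry under $X\leftrightarrow X^{-1}$ prevents direct algebraic elimination, so the two-step single-row reduction must be executed carefully, tracking the $GL_n(\frko)$-equivalence classes of $\{B[1_{n-2}\bot W]\}_W$ under the assumption $\xi_B = 0$ and verifying the appearance of the characteristic factor $1-qX^2$ (rather than $1-X^2$ of the odd case) in the final formula. A secondary technical point is the identity $\eta_{B[1_{n-2}\bot W]} = \eta_B$ used in case (2), which relies on the preservation of the Clifford invariant under scaling by even powers of the uniformizer and a short computation with Hilbert symbols.
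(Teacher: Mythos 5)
Your master identity, the kernel computation ($1-X^2$ for odd $n$, $1-qX^2$ for even $n$ when the relevant $\xi$'s vanish), the discriminant shifts $\frke\mapsto\frke+4$, $\frke\mapsto\frke+2$, and your whole treatment of case (2) are correct and in substance coincide with the paper's route: one starts from Corollary \ref{cor.5.3}(2), rescales to $\widetilde F$, collapses the $x$-sum by the hypothesis (which forces $\frke_{B_y}=\frke_{B^{(n-2)}}$ and hence the needed bound $l_0$), and plays the identity at $X$ against its image at $X^{-1}$ under Proposition \ref{prop.2.1}; the input $\eta_{B[1_{n-2}\bot W]}=\eta_{B[1_{n-2}\bot\vpi 1_2]}=\eta_B$ is immediate because the Clifford invariant is a $GL_n(F)$-equivalence invariant (no determinant argument is needed).

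The gap is in case (1). First, your claim that the direct subtraction "does not eliminate $\sum_W$" is wrong: write the identity in the normalization where $\sum_W\widetilde F(B[1_{n-2}\bot W],X)$ has coefficient $1$ (divide your master identity by $X$, so the left-hand side is $X^{-1}\widetilde F(B[1_{n-2}\bot\vpi 1_2],X)+qX\widetilde F(B,X)$); since $n$ is even, every degree-$n$ term is invariant under $X\leftrightarrow X^{-1}$, so subtracting the $X^{-1}$-version kills $\sum_W$ and leaves $(X^{-1}-X)\widetilde F(B[1_{n-2}\bot\vpi 1_2],X)$ on the left, which after division by $X^{-1}-X$ is exactly (1.1) — and this is precisely how the paper proves it. Second, the substitute you propose, two consecutive applications of Corollary \ref{cor.5.3}(1), is not available under the hypotheses of (1.1): that corollary requires a bound on $\ord(\det 2B_x)$ over $x\in M_{1,n-1}(\frko)$, and its inner sum involves the degree-$(n-1)$ series $F(B_x,qX)$, which the hypothesis $\widetilde F(B_y,X)=\widetilde F(B^{(n-2)},X)$ for $y\in M_{2,n-2}(\frko)$ does not collapse; these are exactly the extra assumptions added only in (1.2), and in Step 3 of the proof of Theorem \ref{th.1.1} the point of (1.1) is that it holds without any degree-$(n-1)$ control. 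Relatedly, your sketch of (1.2) aims to eliminate $\widetilde F(B[1_{n-1}\bot\vpi],X)$, but that term is meant to remain in the identity; what must be eliminated is the unknown sum $\sum_x\eta_{B_x}$ arising when Theorem \ref{th.5.3}(1) is applied to each $B_x$ (note $\eta_{B_x}$ genuinely depends on $x$), and this is achieved, as in the paper, by comparing the resulting identities at $X$ and $X^{-1}$ via Proposition \ref{prop.2.1} — a step your outline omits.
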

\begin{proof} (1) We note that $\frke_{B_y}=\frke_{B^{(n-2)}}$ for any $y \in M_{2,n-2}(\frko)$, and hence there is a positive integer $l_0$ such that
\[\ord(\det (2B_y)) \le l_0 \text{ for any } y \in M_{2,n-2}(\frko).\]
Hence by  Corollary \ref{cor.5.3}, we have,  
\begin{align*}
& \widetilde F(B[1_{n-2} \bot \vpi 1_2],X)X^{-1} \\
& = -qX \widetilde F(B,X)+ \sum_{W \in {\bf D}_{2,1}/GL_2(\frko)} \widetilde F(B[1_{n-2} \bot W],X), \\
& +q^{\hat e/2}X^{\hat e/2 -e/2-3} (1-qX^2) \widetilde F(B^{(n-2)},qX),
\end{align*}
and  
\begin{align*}
& \widetilde F(B[1_{n-2} \bot \vpi 1_2],X)X \\
& = -qX^{-1} \widetilde F(B,X)+ \sum_{W \in {\bf D}_{2,1}/GL_2(\frko)} \widetilde F(B[1_{n-2} \bot W],X) \\
& +q^{\hat e/2}X^{-\hat e/2 +e/2+3} (1-qX^{-2})\widetilde F(B^{(n-2)},qX^{-1}). 
\end{align*}
Then the assertion (1.1) follows from these equality.

We prove (1.2). By the assumption, we can take a positive integer $m_0$ in (1) of Theorem \ref{th.5.3}. Then, as in  the proof of Theorem \ref{th.5.3}, we have
\begin{align*}
&\widetilde F(B[1_{n-1} \bot \vpi],X)=X\widetilde F(B,X) \\
&+ q^{-m_0(n-1)}\sum_{x \in M_{1,n-1}(\frko )/\frkp^{m_0}  M_{1,n-1}(\frko )}q^{\widetilde \frke/4}X^{(-\frke+\widetilde \frke)/2-1}\widetilde F(B_x,q^{1/2}X)
\end{align*}
for any $x \in M_{1,n-1}(\frko)$. 
By the remark after Definition \ref{def.5.3},  for any  $y \in M_{1,n-2}(\frko)$ we have
\begin{align*}
&(B_x)_y=(B\left[\left(\begin{matrix} 1_{n-1} \\ \vpi x \end{matrix} 
 \right)\right])\left[\left(\begin{matrix} 1_{n-2} \\ \vpi y \end{matrix} 
 \right)\right]=B\left[\left(\begin{matrix} 1_{n-2} \\ \vpi z \end{matrix} 
 \right)\right],
\end{align*}
where for $x=(x_1,\ldots,x_{n-1})$ and $y=(y_1,\ldots,y_{n-2})$ we define $z$ by 
\[z=\left(\begin{matrix} y_1 & \ldots & y_{n-2} \\
 x_1+\vpi x_{n-1}y_1 &\ldots & x_{n-2}+\vpi x_{n-1}y_{n-2}
 \end{matrix}\right).\] 
Hence, by the assumption we have $F((B_x)_y,X)=F(B^{(n-2)},X)$. Hence, by (1) of Theorem \ref{th.5.3}, we have
\begin{align*}
& \widetilde F(B_x,q^{1/2}X) \\
&=q^{{\widetilde e}/4}(q^{1/2}X)^{(-\widetilde \frke+\hat \frke)/2} \widetilde F(B^{(n-2)},qX)+\eta_x
q^{{\widetilde e}/4}(q^{1/2}X)^{(\widetilde \frke-\hat \frke)/2} \widetilde F(B^{(n-2)},X^{-1}),
\end{align*}
where $\eta_x=\eta_{B_x}$.
Hence
\begin{align*}
&\widetilde F(B[1_{n-1} \bot \vpi],X)=X\widetilde F(B,X) \\
&+q^{\hat \frke/2}X^{(-\frke+\hat \frke)/2-1}\widetilde F(B^{(n-2)},qX) \\
& +q^{-m_0(n-1)}\sum_{x \in M_{1,n-1}(\frko )/\frkp^{m_0}  M_{1,n-1}(\frko )}\eta_x
q^{\widetilde \frke/2}X^{(-\frke+2\widetilde \frke-\hat \frke)/2-1}\widetilde F(B^{(n-2)},X^{-1}).
\end{align*}
We also have
\begin{align*}
&\widetilde F(B[1_{n-1} \bot \vpi],X^{-1})=X^{-1}\widetilde F(B,X^{-1})\\
&+q^{\hat \frke/2}X^{(\frke-\hat \frke)/2+1}\widetilde F(B^{(n-2)},qX^{-1})\\
&+q^{-m_0(n-1)}\sum_{x \in M_{1,n-1}(\frko )/\frkp^{m_0}  M_{1,n-1}(\frko )}\eta_x
q^{\widetilde \frke/2}X^{(\frke-2\widetilde \frke+\hat \frke)/2+1}\widetilde F(B^{(n-2)},X).
\end{align*}
By Proposition \ref{prop.2.1} we have 
\[\widetilde F(B[1_{n-1} \bot \vpi],X^{-1})=\widetilde F(B[1_{n-1} \bot \vpi],X),\]
\[\widetilde F(B,X^{-1})=\widetilde F(B,X),\]
 and 
\[\widetilde F(B^{(n-2)},X^{-1})=\widetilde F(B^{(n-2)},X).\]
Thus the assertion (1.2) holds. \\
(2) The assertion (2) can be proved by using the same argument as in the proof of (1.2).
\end{proof}

\section{Proof of Theorem \ref{th.1.1}-non dyadic case- } 

In this section and the next, we give a proof of Theorem 1.1. Main tools for the proof are refined versions of Theorem \ref{th.5.3}, from which we can obtain an explicit formula of $\widetilde F(B,X)$
 for any $B \in \calh_n(\frko )$.  In this section, we assume that $q$ is odd. 

 
{\bf Proof of Theorem \ref{th.1.1}.}  We prove the induction on $n$.  If $n=1$,  by Theorem \ref{th.5.3}, we easily see that 
\[\widetilde F(B,X)=\sum_{i=0}^{a_1} X^{i-(a_1/2)}\]
This proves the assertion for $n=1$. Let $n \ge 2$ and assume that the assertion holds for $n'=n-1$. We may assume that  $B \in \calh_n(\frko )$ is  a diagonal Jordan form with 
$\GK(B)=(a_1,\ldots,a_n)$.  Then  $B^{(n-1)}$ is also a diagonal Jordan form with $\GK(B^{(n-1)})=(a_1,\ldots,a_{n-1})$.
Then, we prove the following theorem, by which we prove Theorem 1.1.

\bigskip
\begin{theorem} 
\label{th.6.1}
Under the above notation and the assumption, we have
\begin{align*}
&\widetilde F(B,X)=D(\frke_n,\frke_{n-1},\xi_{B^{(n-1)}};X)\widetilde F(B^{(n-1)},q^{1/2}X) \\
&+\eta_B D(\frke_n, \frke_{n-1},\xi_{B^{(n-1)}};X^{-1})\widetilde F(B^{(n-1)},q^{1/2}X^{-1})
\end{align*}
if $n \ge 3$ is odd, and 
\begin{align*}
&\widetilde F(B,X)=C(\frke_n,\frke_{n-1},\xi_B;X)\widetilde F(B^{(n-1)},q^{1/2}X) \\
&+ C(\frke_n, \frke_{n-1},\xi_B;X^{-1})\widetilde F(B^{(n-1)},q^{1/2}X^{-1})
\end{align*}
if $n$ is even. 
\end{theorem}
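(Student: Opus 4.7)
The plan is to apply Theorem \ref{th.5.3} to the diagonal Jordan form $B$ and then show that each term under the sum is independent of $x$, so that the sum collapses to a single term and its prefactor $q^{-m_0(n-1)}$ cancels against the $q^{m_0(n-1)}$ cosets of $M_{1,n-1}(\frko)/\frkp^{m_0}M_{1,n-1}(\frko)$. The $x$-dependence in Theorem \ref{th.5.3} enters only through $\widetilde F(B_x,q^{1/2}X^{\pm 1})$, $\widetilde\frke_x = \frke_{B_x}$, and (for odd $n$) $\xi_x = \xi_{B_x}$. Once these are shown to coincide with the corresponding invariants of $B^{(n-1)}$, the desired formulas drop out after matching $\frke_B = \frke_n$ and $\frke_{B^{(n-1)}} = \frke_{n-1}$ in the sense of Definition \ref{def.4.3}, which in the non-dyadic case is an immediate computation from $\ord(D_B) = a_1+\cdots+a_n$ and the standard evaluation of $\ord(\frkD_B)$.

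Since $B = B^{(n-1)} \bot (u_n \vpi^{a_n})$ is diagonal with $u_n \in \frko^\times$, Definition \ref{def.5.3} with $R = 0$ and $A = u_n\vpi^{a_n}$ gives
\[
B_x = B^{(n-1)} + u_n \vpi^{a_n+2}\,{}^t\!x\,x,
\]
a rank-one update whose entries all have valuation at least $a_n+2$. For each $k \le n-1$ the same holds for the principal submatrix $B_x^{(k)}$, and the matrix determinant lemma gives
\[
\det B_x^{(k)} = \det B^{(n-1)(k)}\bigl(1 + u_n\vpi^{a_n+2}\, x^{(k)}(B^{(n-1)(k)})^{-1}\,{}^t(x^{(k)})\bigr).
\]
Since $x^{(k)}(B^{(n-1)(k)})^{-1}\,{}^t(x^{(k)})$ has valuation at least $-a_k$ and $a_n + 2 - a_k \ge 2$, the factor in parentheses lies in $1 + \frkp \subset (\frko^\times)^\Box$, the crucial non-dyadic squaring fact. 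Hence $\det B_x^{(k)}$ agrees with $\det B^{(n-1)(k)}$ modulo squares, yielding $\xi_{B_x^{(k)}} = \xi_{B^{(n-1)(k)}}$ and $\frke_{B_x^{(k)}} = \frke_{B^{(n-1)(k)}}$. The invariant $\eta$ at odd degrees is handled similarly, either by a direct Hilbert-symbol computation on a common diagonalization or by noting that $B_x$ and $B^{(n-1)}$ have the same diagonal Jordan type in the non-dyadic classification. Combining these equalities yields $\EGK(B_x) = \EGK(B^{(n-1)})$.

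The inductive hypothesis of Theorem \ref{th.1.1} applied in length $n-1$ then gives
\[
\widetilde F(B_x,X) = \widetilde \calf(\EGK(B_x);q^{1/2},X) = \widetilde \calf(\EGK(B^{(n-1)});q^{1/2},X) = \widetilde F(B^{(n-1)},X),
\]
and substituting into Theorem \ref{th.5.3} collapses the sum to a single term, producing the formulas asserted in Theorem \ref{th.6.1}. The main obstacle is precisely the stability of the full $\EGK$ datum---including the $\zeta$-invariants attached to every principal submatrix---under the perturbation $u_n \vpi^{a_n+2}\,{}^t\!x\,x$. This rests essentially on the Jordan ordering $a_1 \le \cdots \le a_n$ and on the inclusion $1 + \frkp \subset (\frko^\times)^\Box$; both ingredients fail in the dyadic case, which is precisely why that setting requires the more delicate reduced-form analysis of Theorem \ref{th.3.3} in place of the direct computation above.
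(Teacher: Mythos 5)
Your proposal is correct and follows the paper's overall strategy: apply Theorem \ref{th.5.3} to the diagonal Jordan form and show that every term in the sum over $x$ is independent of $x$, so that the prefactor $q^{-m_0(n-1)}$ cancels against the number of cosets. The difference is in how the $x$-independence is justified. The paper's route is more economical: since $B_x=B^{(n-1)}+u_n\vpi^{a_n+2}\,{}^t\!x\,x$ and $a_n\geq a_{n-1}=\frkl_{B^{(n-1)}}$, one has $B_x\equiv B^{(n-1)}\ {\rm mod}\ \frkp^{\frkl_{B^{(n-1)}}+1}$, and (2.3) of Lemma \ref{lem.5.1} (with $\lambda_{\frkl}=\frkl+1$ for odd $q$) gives at one stroke $B_x\sim B^{(n-1)}$, hence $\widetilde\frke_x=\frke_{n-1}$, $\xi_x=\xi_{B^{(n-1)}}$ and $\widetilde F(B_x,X)=\widetilde F(B^{(n-1)},X)$, with no appeal to the inductive hypothesis of Theorem \ref{th.1.1}. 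You instead match invariants one by one via the matrix determinant lemma and $1+\frkp\subset(\frko^{\times})^{\Box}$, deduce $\EGK(B_x)=\EGK(B^{(n-1)})$, and then invoke the degree-$(n-1)$ case of Theorem \ref{th.1.1} to convert this into equality of the $\widetilde F$'s; this is admissible here, since the inductive hypothesis is part of ``the above assumption,'' but it is a heavier tool than needed, and it leaves two points to tighten. First, the Clifford invariant $\eta$ and the intermediate $\zeta$-invariants are not functions of the determinants alone, so ``handled similarly'' really amounts to proving $B_x^{(k)}\sim B^{(n-1)(k)}$ for each $k$ --- which is exactly what the congruence together with Lemma \ref{lem.5.1} (2.3) yields, or what Theorem \ref{th.3.3} yields since $B_x-B^{(n-1)}\in\calm^0((a_1,\ldots,a_{n-1}))$. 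Second, reading off $\EGK(B_x)$ from the leading principal submatrices of $B_x$ itself presupposes that $B_x$ is an optimal (reduced) form, which again is most quickly justified by the equivalence $B_x\sim B^{(n-1)}$ or by Theorem \ref{th.3.3}. With either patch your argument closes, and the remaining bookkeeping ($\frke_B=\frke_n$, $\frke_{B^{(n-1)}}=\frke_{n-1}$, and the collapse of the sum) agrees with the paper's proof.
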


\begin{proof}  Let $n$ be odd. Then, by [\cite{Ike-Kat}, Theorem 0.1], we have $\frke_B=\frke_n$. For $x \in M_{1,n-1}(\frko)$ let $B_x$ be the matrix in Definition \ref{def.5.3}, and put 
$\frke_x=\frke_{B_x}$ and $\xi_x=\xi_{B_x}$. 
We note that 
\[B_x \equiv B^{(n-1)} \text{ mod } \frkp^{\frkl_{B^{(n-1)}}+1}S_{n-1}(\frko) \text{ for any } x \in M_{1,n-1}(\frko).\]
Hence, by  (2.3) of Lemma \ref{lem.5.1}, $B_x \sim B^{(n-1)}$ and in particular  
\[\frke_x=\frke_{n-1}, \  \xi_x=\xi_{B^{(n-1)}} \text{ and } \widetilde F(B_x,X)=\widetilde F(B^{(n-1)},X)\]
for any $x \in M_{1,n-1}(\frko)$.  
Hence, by Theorem \ref{th.5.3}, we have
\begin{align*}
&\widetilde F(B,X)=D(\frke_n,\frke_{n-1},\xi_{B^{(n-1)}};X)\widetilde F(B^{(n-1)},q^{1/2}X)\\
&+\eta_B D(\frke_n, \frke_{n-1},\xi_{B^{(n-1)}};X^{-1})\widetilde F(B^{(n-1)},q^{1/2}X^{-1}).
\end{align*}
This proves the formula in the case that  $n \ge 3$ is odd. 
Similarly the induction formula can be proved  in the case that  $n \ge 2$ is even. 
\end{proof}

\section{Proof of Theorem \ref{th.1.1}-dyadic case- }
Next we must consider a more complicated case where $q$ is even. 
Throughout this section we assume that $q$ is even. 
Let $B$ be a reduced  form in $\calh_n({\mathfrak o})$ with GK-type $((a_1,\ldots,a_n),\sig)$.
Put $\ua=(a_1,\ldots,a_n)$.
 We say that $(\ua,\sig)$ belongs to category (I) if $n=\sig(n-1)$ and $a_{n-1}=a_n$.  We say that  $\sig$ belongs to category (II) if
$B$ does not belong to category (I). We note that $(\ua,\sig)$ belongs to category (II) if and only if  
 $a_{n-1}<a_n$ or $\sig(n)=n$. In particular, $(\ua,\sig)$ belongs to category (II) if $n=1$.  We also say that $B$ belongs to category (I) or (II) according as $(\ua,\sig)$ belongs to category (I) or (II).  We note that if two reduced forms are of the same GK-type, then they belong to the same category. 

\begin{definition}
\label{def.7.1}
 Let $B=(b_{ij})$ be a reduced  form in $\calh_n(\frko)$ with GK-type $(\ua,\sig)$, and put $\ua=(a_1,\ldots,a_n)$, and $\calp^0$ be the set in Definition \ref{def.3.3}. Let $n$ be odd. Then ${\calp}^0$ consists of exactly one element, which will be denoted by $i_0=i_0(B)$.  
Let  $n$ be even. Then $\#({\calp}^0)=0$ or $2$.
If $a_1+\cdots+a_n$ is odd, then ${\calp}^0$ consists of two elements, which will be denoted by $i_1=i_1(B)$ and $i_2=i_2(B)$. In this case we note that for $k=1,2$,  we have $b_{i_ki_k}=\vpi^{a_{i_k}}u_{i_ki_k}$ with $u_{i_ki_k} \in \frko ^{\times}$, and for $j \not=i_k$, we have $b_{i_kj}=2^{-1}\vpi^{[(a_{i_k}+a_j+2)/2]}u_{i_kj}$ with $u_{i_kj} \in \frko $.  We say that $(\ua,\sig)$ is good if $\calp^0$ is empty. We  also say that $B$ is a good form in this case. If $B$ is a good form, then we remark that 
\[\langle (-1)^{n/2} \det B,x \rangle =\xi_B^{\ord(x)}\]
for any $x \in F^{\times}$. We also note that $B$ is a good form if and only if both $n$  and $a_1+\cdots +a_n$ are even. 
\end{definition}
We say that an element $K$ of $\calh_2(\frko)$ is a primitive unramified binary form if $\GK(B)=(0,0)$.  
We note that $K=(c_{ij}) \in \calh_2(\frko)$ is a primitive unramified binary form if and only if $2c_{12} \in \frko^{\times}$.
The following assertion can easily be proved.

\begin{lemma} 
\label{lem.7.1} 
Let  $K=(c_{ij})$ be a  primitive unramified binary form such that $c_{11}c_{22} \in \frkp$.
Then, for any $a,b \in \frko$ such that $ab \in \frkp$, there is an element $U \in GL_2(\frko)$ such that $B[U]=\left(\begin{smallmatrix} a & 1/2 \\ 1/2 &b \end{smallmatrix}\right)$. 
 \end{lemma}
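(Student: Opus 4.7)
The plan is to verify that every primitive unramified binary form whose diagonal product lies in $\frkp$ is $\GL_2(\frko)$-equivalent to the hyperbolic plane $H_1 = \left(\begin{smallmatrix} 0 & 1/2 \\ 1/2 & 0 \end{smallmatrix}\right)$; applying this both to $K$ and to the target $T = \left(\begin{smallmatrix} a & 1/2 \\ 1/2 & b \end{smallmatrix}\right)$ and composing then yields the desired $U$.

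First I would normalize $K$. The diagonal scaling $\diag(u_1, u_2) \in \GL_2(\frko)$ with $u_1 u_2 = 1/(2c_{12})$ reduces to the case $c_{12} = 1/2$ without disturbing $c_{11} c_{22} \in \frkp$, and conjugation by $\left(\begin{smallmatrix} 0 & 1 \\ 1 & 0 \end{smallmatrix}\right)$ lets me further assume $c_{11} \in \frkp$. Hensel's lemma applied to $g(y) = c_{22}y^2 + y + c_{11}$, whose value and derivative at $0$ are $c_{11} \in \frkp$ and $1 \in \frko^\times$ respectively, then produces $y_0 \in \frkp$ with $K[\trs(1,\,y_0)] = 0$. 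Setting $U_1 = \left(\begin{smallmatrix} 1 & 0 \\ y_0 & 1 \end{smallmatrix}\right) \in \SL_2(\frko)$, a direct computation gives
\[
K[U_1] = \begin{pmatrix} 0 & c' \\ c' & c_{22} \end{pmatrix}, \qquad c' = \tfrac12 + c_{22} y_0.
\]
The key point is that since $2 \in \frkp$ in the dyadic case, $2c' = 1 + 2c_{22}y_0 \equiv 1 \pmod{\frkp}$ is a unit. Hence $U_2 = \diag(1, 1/(2c'))$ lies in $\GL_2(\frko)$, so $K[U_1 U_2] = \left(\begin{smallmatrix} 0 & 1/2 \\ 1/2 & d \end{smallmatrix}\right)$ for some $d \in \frko$, and finally the unipotent step $U_3 = \left(\begin{smallmatrix} 1 & -d \\ 0 & 1 \end{smallmatrix}\right)$ kills the $(2,2)$-entry and shows $K \sim H_1$.

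The identical procedure applies to $T$: the hypothesis $ab \in \frkp$ forces, after possibly swapping rows and columns, that $a \in \frkp$, and Hensel's lemma applied to $by^2 + y + a$ then produces a primitive isotropic vector for $T$, yielding $T \sim H_1$ via the same chain of elementary transformations. Composing the two equivalences gives $K \sim T$ and hence the required $U$. There is no substantive obstacle; one only needs to confirm at each step that the elementary transformations remain in $\GL_2(\frko)$, which is automatic from $2c' \in \frko^\times$ together with the manifest integrality of the unipotent and diagonal matrices involved.
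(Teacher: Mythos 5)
Your proof is correct. The paper gives no argument for this lemma (it is dismissed with ``can easily be proved''), so there is nothing to compare against; your route --- normalize $2c_{12}$ to a unit, use $c_{11}c_{22}\in\frkp$ and Hensel's lemma on $c_{22}y^2+y+c_{11}$ to produce a primitive isotropic vector, reduce to $H_1=\left(\begin{smallmatrix} 0 & 1/2 \\ 1/2 & 0\end{smallmatrix}\right)$ by the explicit chain $U_1,U_2,U_3$, do the same for $\left(\begin{smallmatrix} a & 1/2 \\ 1/2 & b\end{smallmatrix}\right)$ using $ab\in\frkp$, and compose --- is exactly the kind of elementary argument the authors presumably had in mind, and all the steps check out. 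One small remark: the unit property of $2c'=1+2c_{22}y_0$ already follows from $y_0\in\frkp$ alone, so your argument does not actually need $2\in\frkp$ and works verbatim in any residue characteristic.
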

 
For a non-decreasing sequence $\ua=(a_1,\ldots,a_n)$ of integers, let $G_{\ua}$ be the group defined by 
\[
G_{\ua}=
\{g=(g_{ij})\in \GL_n(\frko) \,|\, \text{ $\ord(g_{ij})\geq (a_j-a_i)/2$, if $a_i<a_j$}\}.
\]
We  say a reduced form $B=(b_{ij})$ of GK type $(\ua, \sig)$ is a strongly reduced form if the following condition hold:
\begin{itemize}
\item[(SR)] If $i\notin \calp^0$, then $b_{ij}=0$ for $j>\max\{ i, \sig(i)\}$.
\end{itemize}
We note that a reduced form of GK type $(\ua, \sig)$ is $G_\ua$-equivalent to
 a strongly reduced form of GK type $(\ua, \sig)$ (see [\cite{Ike-Kat}, Remark 4.1]). 
Let ${\bf D}_{2,1}$ be the subset of $M_2(\frko)$ in Definition \ref{def.5.2}.

\begin{lemma} 
\label{lem.7.2} 
Let $n$ be an odd integer, and  let $B =(b_{ij}) \in \calh_n(\frko )$ be a strongly reduced form of $\GK$-type $(\ua,\sig)$ with  $\ua=(a_1,\ldots,a_n)$. 
 Assume that $B$ belongs to category {\rm (I)} and that $a_1+\cdots+a_{n-1}$ is even.  
Then for any $W \in {\bf D}_{2,1}$ the matrix $B[1_{n-2} \bot W]$ is  $GL_n(\frko)$-equivalent to  a reduced  form $C$ belonging to category {\rm (I)} of $\GK$ type $(a_1,\ldots,a_{n-2},a_{n-1}+1,a_{n-1}+1)$ such that 
$C^{(n-2)}=B^{(n-2)}$. 

\end{lemma}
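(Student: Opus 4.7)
The plan is to proceed in three steps.

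\emph{Step 1: Reduction to $W = W' := \diag(\vpi, 1)$.}  Using the double coset decomposition ${\bf D}_{2,1} = GL_2(\frko)\, W'\, GL_2(\frko)$, write $W = U_1 W' U_2$ with $U_1, U_2 \in GL_2(\frko)$.  Right-multiplication by $1_{n-2} \bot U_2^{-1}$ is a $GL_n(\frko)$-equivalence preserving $B^{(n-2)}$, so it suffices to consider $W = U_1 W'$.  I then replace $B$ by $\tilde B := B[1_{n-2} \bot U_1]$; this has the same upper-left $(n-2)\times(n-2)$ block as $B$, and one checks directly that $\tilde B$ is again a strongly reduced form of GK-type $(\ua,\sigma)$ in category (I), since $K[U_1] = \vpi^{a_{n-1}} K_0[U_1]$ is still $\vpi^{a_{n-1}}$ times a primitive unramified binary form and the (SR) condition is preserved.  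Hence we may assume $W = W'$.

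\emph{Step 2: Structural description and the key transformation.}  Since $n$ is odd and $\sigma$ swaps $n-1$ and $n$, the restriction of $\sigma$ to $\{1,\ldots,n-2\}$ is an involution on a set of odd cardinality; by Definition~3.3(i) we have $|\calp^0| = 1$, say $\calp^0 = \{i_0\}$ with $i_0 \le n-2$.  The condition (SR) then forces the cross block $L = (b_{ij})_{i \le n-2,\, j \in \{n-1,n\}}$ to vanish outside its $i_0$-th row, and the bottom-right block equals $\vpi^{a_{n-1}} K_0$ with $K_0 = (c_{ij})$ a primitive unramified binary form, so $2c_{12} \in \frko^\times$.  Definition~3.5 moreover gives $b_{i_0,i_0} = \vpi^{a_{i_0}} u_{i_0}$ with $u_{i_0} \in \frko^\times$ and $\ord(2 b_{i_0,j}) > (a_{i_0}+a_{n-1})/2$ for $j \in \{n-1,n\}$.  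A direct computation shows that $B[1_{n-2} \bot W']$ already satisfies every reduced-form inequality of Definition~3.5 for the new GK-type $(a_1,\ldots,a_{n-2},a_{n-1}+1,a_{n-1}+1)$ with the sole exception of $\ord((\cdot)_{n,n}) \ge a_{n-1}+1$, which fails precisely when $c_{22} \in \frko^\times$.  In that case I apply the $GL_n(\frko)$-transformation
\[
U = 1_n + \beta\, e_{i_0}\, e_n^T, \qquad \beta \in \frko,
\]
which preserves the upper-left block and modifies the $(n,n)$-entry to $\vpi^{a_{n-1}} c_{22} + 2\beta\, b_{i_0,n} + \beta^2 \vpi^{a_{i_0}} u_{i_0}$.

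\emph{Step 3: Choosing $\beta$ and verifying the conditions.}  The admissibility condition (ii) of Definition~3.3 forces $a_j \equiv a_{\sigma(j)} \pmod 2$ on every $\calp^\pm$-pair.  Combined with the hypothesis that $a_1 + \cdots + a_{n-1}$ is even, a direct parity count over the $\sigma$-orbits in $\{1,\ldots,n-1\}$ yields $a_{n-1} \equiv a_{i_0} \pmod 2$.  I then set $\beta = \vpi^{(a_{n-1}-a_{i_0})/2}\beta_0$ with $\beta_0 \in \frko^\times$ to be determined.  The middle term $2\beta b_{i_0,n}$ acquires valuation $\ge a_{n-1}+1$ (using the strict inequality from Definition~3.5(3)), while $\beta^2 \vpi^{a_{i_0}} u_{i_0}$ has valuation exactly $a_{n-1}$.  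The cancellation $\beta_0^2 u_{i_0} + c_{22} \in \frkp$ is then solvable because $\frkk$ has characteristic $2$, so the Frobenius $x \mapsto x^2$ is surjective on $\frkk$.  An application of Lemma~\ref{lem.7.1} to the resulting bottom-right block puts it in the form $\vpi^{a_{n-1}+1} K_0'$ with $K_0'$ primitive unramified, giving the claimed reduced form $C$ of GK-type $(a_1,\ldots,a_{n-2},a_{n-1}+1,a_{n-1}+1)$ in category (I) with $C^{(n-2)} = B^{(n-2)}$.  The main obstacle will be verifying the new cross-block entries: the transformation $U$ introduces extra terms $\beta B^{(n-2)}_{i,i_0}$ in column $n$ for rows $i \ne i_0$, and checking the strict inequality $\ord(2\,\cdot\,) > (a_i+a_{n-1}+1)/2$ requires case-splitting on the parity of $a_i + a_{i_0}$ and careful use of the strict inequalities already present in $B$ via Definition~3.5(3).
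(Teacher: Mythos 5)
Your Steps 1 and 2, and the completion-of-square in Step 3, mirror the paper's argument (the paper reduces to $\diag(1,\vpi)$ rather than $\diag(\vpi,1)$ and quotes [\cite{Ike-Kat}, Lemma 4.4] instead of re-deriving the residue-field solvability, but that is cosmetic; the parity count giving $a_{i_0}\equiv a_{n-1}\bmod 2$ is also the same). However, the point you defer to the end --- the new entries in the last column created by $U=1_n+\beta\,e_{i_0}\,{}^te_n$ --- is a genuine gap, and your proposed fix does not close it. For $i\le n-2$, $i\neq i_0$, the new $(i,n)$-entry is $\beta b_{i,i_0}$ (the old entry vanishes by (SR)), and all you know from Definition \ref{def.3.5}(3) is $\ord(2b_{i,i_0})>(a_i+a_{i_0})/2$. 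When $a_i\not\equiv a_{i_0}\bmod 2$ this only gives $\ord(2b_{i,i_0})\ge (a_i+a_{i_0}+1)/2$, hence $\ord(2\beta b_{i,i_0})\ge (a_i+a_{n-1}+1)/2$ with equality possible; but the reduced-form condition (3) for the new type $(a_1,\ldots,a_{n-2},a_{n-1}+1,a_{n-1}+1)$ demands the \emph{strict} inequality $\ord(2c_{i,n})>(a_i+a_{n-1}+1)/2$ (note $\sigma'(i)\neq n$). So "case-splitting on the parity of $a_i+a_{i_0}$" cannot succeed in the odd-parity case: the transformed matrix lies in $\calm(\ua')$ but need not be reduced.

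What is missing is a further $GL_n(\frko)$-equivalence that removes (or improves) these borderline column entries while leaving the upper-left $(n-2)\times(n-2)$ block and the last row/column structure intact. This is exactly how the paper finishes: after the analogous completion-of-square (performed there on the $(n-1,n-1)$-entry, with $x$ supplied by [\cite{Ike-Kat}, Lemma 4.4]), it invokes [\cite{Ike-Kat}, Lemma 4.3] to pass to an equivalent form $C$ with $c_{i,n-1}=0$ for $i\neq i_0$, $i\le n-2$, and with controlled valuations at $i=i_0,n-1$; only then do all conditions of Definition \ref{def.3.5} for the new GK-type follow (for $i=i_0$ one uses $a_{i_0}\equiv a_{n-1}\bmod 2$ to upgrade the inequality by a full power of $\vpi$). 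You should add this cleanup step (either by citing [\cite{Ike-Kat}, Lemma 4.3] or by proving an explicit column-operation lemma); without it the final verification in your Step 3 fails.
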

\begin{proof} Take $U_1, U_2 \in GL_2(\frko)$ such that $W=U_1\smallmattwo(1;0;0;\vpi)U_2$. Then,
$B[1_{n-2} \bot U_1]$ is also a strongly reduced form, and $B[1_{n-2} \bot W]$ is $GL_n(\frko)$-equivalent to
$B[1_{n-2} \bot U_1]\left[1_{n-2} \bot \smallmattwo(1;0;0;\vpi)\right]$. Therefore, it suffices to prove the assertion for $W= \smallmattwo(1;0;0;\vpi)$.  Let $i_0=i_0(B)$ be the integer defined in Definition \ref{def.7.1}.
 Put $a_i'= a_i$ for $1 \le i \le n-2$, and $a_{n-1}'=a_n'=a_{n-1}+1$. 
 Put $B'=(b_{ij}')=B\left[1_{n-2} \bot \smallmattwo(1;0;0;\vpi) \right]$. 
Then 
\[b_{ij}'=b_{ij}\]
for any $(i,j)$ such that $1 \le i,j \le n-1$,
\[b_{i,n}'=\vpi b_{i,n}\]
for any $1 \le i \le n-1$, and 
\[b_{n,n}'=\vpi^2 b_{n,n}.\]
Hence, 
\[\ord(2b_{n-1,n}') =a_{n-1}', \ \ord(b_{n,n}') \ge a_n',\]
and
\[\ord(2b_{i,n}') > (a_i'+a_{n}')/2\]
for any $1 \le i \le n-2$.
Since $B$ is strongly reduced and $a_{i_0}+a_{n-1}$ is even, we have 
\[b_{i,n-1}' =b_{i,n-1}=0\]
for any $1 \le i \le n-2$ such that $i \not=i_0$, and
\[\ord(2b_{i_0,n-1}')=\ord(2b_{i_0,n-1}) \ge (a_{i_0} +a_{n-1})/2+1 >(a_{i_0}+a_{n-1}')/2.\]
 We note that $B^{(n-2)}$ is a  reduced  form by [\cite{Ike-Kat}, Lemma 4.1].

Now first assume that $\ord(b_{n-1,n-1}) >a_{n-1}$. Then, $\ord(b_{n-1,n-1}') \ge a_{n-1}'$, and 
this implies that $B'$ is a  reduced  form with $\GK(B')=(a_1',\ldots,a_n')$.

 Next assume that $\ord(b_{n-1,n-1}) =a_{n-1}$. Since we have $\sig(i_0)=i_0$, we have 
$\ord(b_{i_0,i_0})=a_{i_0}$ and $\ord(2b_{i_0,n-1}) > {a_{i_0}+a_{n-1} \over 2}$. Moreover since 
$a_{n-1}-a_{i_0}$ is even, by [\cite{Ike-Kat}, Lemma 4.4], we can take $x \in \frko $ such that 
\[\ord(x) \ge {a_{n-1} -a_{i_0}  \over 2}, \ \ord(b_{n-1,n-1} +2b_{i_0,n-1}x+b_{i_0,i_0}x^2) > a_{n-1}.\]
 Take 
\[
V_1=
\left(\begin{smallmatrix}  1  & 0          & 0              & 0 &  0                            & 0 \\
\noalign{\vskip -2pt}
                           0           &  \smallddots  & 0  & 0 & 0                        & 0 \\
                           0 & 0  &  1            & 0 & \!\! u_{i_0,n-1} &  0 \\
\noalign{\vskip -6pt}
                           0 & 0          &  0         & \;\smallddots\; &0                              &0 \\
                           0 & 0          &  0         & 0 &1                              &0 \\
\noalign{\vskip 2pt}
                           0 & 0          & 0          & 0 &0                              &1 
                                                      \end{smallmatrix} \right)
\]
with $u_{i_0,n-1}=x$. Put $B''=(b_{ij}'')=B'[V_1]$. Then we have 
\[b_{ij}'' =b_{ij}'   \                  \text{ if } 1 \le i,j \le n-2, \text { or } i=n, \text { or } j=n,\]
and
\[\ord(2^{1-\delta_{i,n-1}}b_{i,n-1}'')=\ord(2^{1-\delta_{i,n-1}}b_{n-1,i}'') >  (a_i+a_{n-1})/2\]
for any  $1 \le i \le n-1$. Then, by [\cite{Ike-Kat}, Lemma 4.3], $B''$ is $GL_n(\frko)$-equivalent to  a reduced form $C=(c_{ij})$ such that 
\begin{align*}
&c_{ij} =b_{ij}''   \                  \text{ if } 1 \le i,j \le n-2, \text { or } i=n, \text { or } j=n,\\
&c_{i,n-1}=c_{n-1,i}=0 \text{ if } 1 \le i \le n-2 \text{ and } i\not=i_0,
\end{align*}
and 
\[\ord(2^{1-\delta_{i,n-1}}c_{i,n-1})=\ord(2^{1-\delta_{i,n-1}}c_{n-1,i}) >  (a_i+a_{n-1})/2 \]
if $i=i_0$ or $n-1$.
Hence we can prove that $C$ satisfies the required conditions in the  same way as  in the first case.
\end{proof}
 \begin{definition}
\label{def.7.2} Let  $B=(b_{ij}) \in \calh_n(\frko)$ be a strongly reduced form with ${\rm GK}(B) =(a_1,\ldots,a_n)$. Assume that
$B$ belongs to category (I) and that $a_1+\cdots+a_{n-2}$ is odd, and let $i_1=i_1(B)$ and $i_2=i_2(B)$ be those defined in Definition \ref{def.7.1}.
We then define $\kappa(B)$ as 
\[\kappa(B)=\min\{2\ord(2b_{i_k,m})-a_{i_k}-a_m \ | \ k=1,2, m=n-1,n \} .\]
Here we make the convention that $\kappa(B)=\infty$ if $b_{i_k,m}=0$ for any $k=1,2$ and $m=n-1,n$. 
We remark that
\[\kappa(B[1_{n-2} \bot \vpi^rU])=\kappa(B)\]
for any non-negative integer $r$ and $U \in GL_2(\frko)$.
\end{definition}
\begin{lemma}
\label{lem.7.3}
 Let $m$ be an even integer . Let  $C =(c_{ij}) \in \calh_m(\frko)$ be a reduced form of $\GK$ type $(\ua,\sigma)$ with $\ua=(a_1,\ldots,a_m)$. Assume that
 $a_1+\cdots+a_m$ is odd, and let $i_1=i_1(C),i_2=i_2(C)$ be the integers $i_1, i_2 $ defined in Definition \ref{def.7.1}. 
Let $A$ be an integer such that $A  \ge a_m$ and $A+a_{i_1}+\ord(\frkD_C)$ is odd.  
For each $i$ such that $1 \le i \le m$ put 
\[x_i=\begin{cases} 2^{-1}c\vpi^{(A+a_{i_1}+\ord(\frkD_C)-1)/2} & \text{ if } i=i_1 \\
0 & \text{ otherwise},
\end{cases}\]
with $c \in \frko^{\times}$ and ${\bf y}=\left(\begin{matrix}y_1 \\ \vdots \\ y_m \end{matrix}\right)=C^{-1}\left(\begin{matrix} x_1 \\ \vdots \\ x_m \end{matrix}\right)$. 
Then
\[\ord(y_i) \ge (A-a_i)/2\]
 for any $1 \le i \le m$, and 
\[\ord(C[{\bf y}]) =A.\]
\end{lemma}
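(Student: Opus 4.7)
The plan is to compute $\bfy = C^{-1}\bfx$ via Cramer's rule and reduce everything to one determinant identity. Since $\bfx$ has only its $i_1$-th component non-zero and $C\bfy=\bfx$, we have
\[
C[\bfy]=\bfy^{\,t}C\bfy=\bfy^{\,t}\bfx=y_{i_1}x_{i_1},
\]
so $\ord(C[\bfy])=\ord(y_{i_1})+\ord(x_{i_1})$. By Cramer's rule each component $y_i$ equals $\pm x_{i_1}\det C(i_1;i)/\det C$, so the inequalities $\ord(y_i)\geq (A-a_i)/2$ become lower bounds on the orders of the corresponding minors, and the equality $\ord(C[\bfy])=A$ becomes the identity
\[
\ord(\det C) - \ord(\det C(i_1,i_1)) = a_{i_1} + \ord(\frkD_C) - 2e_0 - 1. \qquad (\star)
\]

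For $i\neq i_1$, expanding $\det C(i_1;i)$ along the row or column corresponding to $\sigma(i)$ (if $i$ lies in a $\sigma$-pair) or to $i_2$ (if $i=i_2$), and applying the order inequalities from Definition \ref{def.3.5}, gives the required bound directly. For $i=i_1$, once $(\star)$ is established the bound $\ord(y_{i_1})\geq (A-a_{i_1})/2$ follows automatically from the classical inequality $\ord(\frkD_C)\leq 2e_0+1$ valid for any quadratic extension of $F$.

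The crux is to prove $(\star)$. Since the $\sigma$-pairs of $C$ do not involve the fixed point $i_1$, their $2\times 2$ principal-block determinants are common factors of the leading terms of $\det C$ and $\det C(i_1,i_1)$ and hence cancel in the difference, reducing $(\star)$ modulo strictly higher-order corrections to the principal $2\times 2$ block on $\{i_1,i_2\}$. For this binary form, a direct computation using $D_C=-4\det C$ and the classical description of the discriminant of the ramified quadratic extension $F(\sqrt{D_C})/F$ verifies the identity, splitting into the diagonal-dominant subcase $2\ord(c_{i_1,i_2})\geq a_{i_1}+a_{i_2}$ (where $\ord(\frkD_C)=2e_0+1$) and the off-diagonal-dominant subcase $2\ord(c_{i_1,i_2})<a_{i_1}+a_{i_2}$ (where $F(\sqrt{D_C})=F(\sqrt{1+v})$ with $\ord(v)=a_{i_1}+a_{i_2}-2\ord(c_{i_1,i_2})$, giving $\ord(\frkD_C)=2e_0+1-\ord(v)$). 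The main obstacle is making this reduction to the $\{i_1,i_2\}$ block rigorous: the reduced-form definition provides only lower bounds on the orders of the off-block entries $c_{i_1,j}$ and $c_{i_2,j}$ for $j\notin\calp^0$, and a careful dyadic accounting is required to ensure these entries cannot conspire to shift the leading order of either $\det C$ or $\det C(i_1,i_1)$.
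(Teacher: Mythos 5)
Your reduction is the right one, and it is in fact the same reduction the paper makes: since $\bfx$ has a single nonzero entry, $C[\bfy]={}^t\bfy\,\bfx=y_{i_1}x_{i_1}$ and $y_i=c^*_{i,i_1}x_{i_1}$ with $(c^*_{ij})=C^{-1}$, i.e.\ $y_i=\pm x_{i_1}\det C(i_1;i)/\det C$. The difference is that the paper does not reprove the needed determinant estimates; it quotes [\cite{Ike-Kat}, Lemma 3.12], which gives $\ord(c^*_{ii})=2e_0+1-\frkd-a_i$ exactly for $i=i_1,i_2$ (with $\frkd=\ord(\frkD_C)$), strict inequality for the other diagonal entries, and $\ord(c^*_{ij})\geq (2e_0+1-\frkd-a_i-a_j)/2$ off the diagonal; the lemma is then a two-line computation from $C[\bfy]=C^{-1}[\bfx]=c^*_{i_1i_1}x_{i_1}^2$. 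Your proposal is essentially an attempt to reprove that cited lemma by Cramer's rule, and it does not get there.

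Concretely, there are two gaps. First, the assertion that for $i\neq i_1$ an expansion of $\det C(i_1;i)$ along one row or column together with the inequalities of Definition \ref{def.3.5} ``gives the required bound directly'' is not correct: the entrywise inequalities alone bound every monomial of this minor below only by $\sum_k a_k-(a_{i_1}+a_i)/2-(m-1)e_0$, whereas, since $\ord(\det C)=\sum_k a_k+\frkd-1-me_0$ (this is [\cite{Ike-Kat}, Theorem 0.1], used as (7.4.2) in the paper), the bound you actually need is
\[
\ord(\det C(i_1;i))\;\geq\;\sum_k a_k-\frac{a_{i_1}+a_i}{2}-(m-1)e_0+\frac{\frkd-1}{2},
\]
and here $F(\sqrt{D_C})/F$ is ramified dyadic (degree even, $a_1+\cdots+a_m$ odd), so $\frkd\geq 2$ and the entrywise estimate falls short by $(\frkd-1)/2>0$; closing this requires the finer structure of the reduced form (which entries attain their bounds, parity of the exponents, the strict inequalities in condition (3)), i.e.\ precisely the content of the cited lemma. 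Second, and decisively, the identity $(\star)$ for the diagonal cofactor --- which is what yields both $\ord(C[\bfy])=A$ (an exact order, so an upper as well as a lower estimate) and the bound for $y_{i_1}$ --- is the point you yourself defer as ``the main obstacle'': the proposed cancellation of the $\sigma$-pair blocks and reduction to the $\{i_1,i_2\}$ block is only a programme, with no argument that the off-block entries cannot lower the leading order of $\det C$ or $\det C(i_1;i_1)$. So the crux of the lemma is left unproved; as written the proposal establishes the statement only modulo the very valuation facts that the paper imports from [\cite{Ike-Kat}, Lemma 3.12].
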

\begin{proof}
Write $C^{-1}=(c_{ij}^*)_{1 \le i,j \le m}$ and put $\frkd=\ord(\frkD_{C})$. Then by 
[\cite{Ike-Kat}, Lemma 3.12], we have the following.
\begin{align}
\label{eq.7.3.1} 
&\ord(c_{ii}^*)= 2e_0+1-\frkd-a_i & (i =i_k \text{ with } k=1,2), \tag{7.3.1} \\
\label{eq.7.3.2} 
&\ord(c_{ii}^*) > 2e_0+1-\frkd-a_i & (i\not=i_1,i_2), \tag{7.3.2}\\
 \label{eq.7.3.3}
&
\ord(c_{ij}^*) \ge  (2e_0+1-\frkd-a_i-a_j)/2 & (i=i_1, j=i_2), \tag{7.3.3}\\
 \label{eq.7.3.4}
&
\ord(c_{ij}^*) >  (2e_0+1-\frkd-a_i-a_j)/2 & (i \not=j, \{ i,j \} \not=\{i_1,i_2\}). \tag{7.3.4}
\end{align}
Hence, by a simple computation,  we have $\ord(y_i) \ge (A-a_i)/2$ for any $1 \le i \le m$, and
\[\ord(C[{\bf y}])=\ord\left(C^{-1}\left[\left(\begin{matrix} x_1 \\ \vdots \\ x_m \end{matrix}\right)\right]\right)=\ord(c_{i_1,i_1}^*x_{i_1}^2)=A.\]
\end{proof}

\begin{lemma}
\label{lem.7.4}
  Let $n$ be an even integer . Let  $B$ be a reduced form of $\GK$ type $(\ua,\sigma)$ with $\ua=(a_1,\ldots,a_n)$. Assume that
$B$  belongs to category {\rm (I)} and that $a_1+\cdots+a_n$ is odd, and let $i_1=i_1(B),i_2=i_2(B)$ be the integers $i_1, i_2 $ defined in Definition \ref{def.7.1}. 
Then, we have $n \ge 4$ and $i_1, i_2 \le n-2$, and  there is a strongly reduced form $C=(c_{ij})_{n \times n}$ which is $G_{\ua}$-equivalent to  $B$  and satisfies the following three conditions: 
\begin{itemize}
\item [(CEI-0)] $C^{(n-2)}=B^{(n-2)}$.\\
\item [(CEI-1)] $ \ \ (c_{ij})_{n-1 \le i,j \le n}=\vpi^{a_{n-1}} \left(\begin{smallmatrix} 0 & 1/2 \\ 1/2 & 0 \end{smallmatrix}\right).$
\item [(CEI-2)]  $\kappa(C)=2\ord(2c_{i_k,n-1})-a_{i_k}-a_{n-1}<\ord(\frkD_{C^{(n-2)}})$ with some $k=1,2$. 
  \end{itemize}
\end{lemma}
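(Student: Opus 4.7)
\emph{Combinatorial piece.} Since $B$ belongs to category (I), $\sigma(n-1)=n$ and $a_{n-1}=a_n$, so neither $n-1$ nor $n$ is fixed by $\sigma$; hence $n-1, n \notin \calp^0$. The hypothesis that $a_1+\cdots+a_n$ is odd, combined with $a_{n-1}=a_n$, forces $a_1+\cdots+a_{n-2}$ to be odd, and then Definition \ref{def.3.3}(i) compels $\calp^0=\{i_1,i_2\}$ with $i_1,i_2\le n-2$. In particular $n-2\ge 2$, so $n\ge 4$.

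\emph{Constructing $C$ and enforcing (CEI-0), (CEI-1).} After $G_\ua$-replacing $B$ by a strongly reduced form (Remark 4.1 of \cite{Ike-Kat}), I decompose $B = \begin{pmatrix} B_1 & B_2 \\ B_2^T & B_3 \end{pmatrix}$ with $B_1=B^{(n-2)}$. Reducedness together with category (I) forces $B_3 = \vpi^{a_{n-1}}K$ where $K = \smallmattwo(\alpha;\gamma/2;\gamma/2;\beta)$ is a primitive unramified binary form with $\gamma \in \frko^\times$ and $\alpha,\beta \in \frko$. I then look for $C=B[U]$ with
\[
U = \begin{pmatrix} 1_{n-2} & S \\ 0 & V \end{pmatrix}, \qquad S=(s_1\,|\,s_2)\in M_{n-2,2}(\frko),\quad V\in\GL_2(\frko),
\]
subject to the $G_\ua$-constraint $\ord(s_{i,k})\ge (a_{n-1}-a_i)/2$ whenever $a_i<a_{n-1}$. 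Such $U$ automatically yield $C^{(n-2)}=B_1$, so (CEI-0) holds. Condition (CEI-1) expands to the system
$B_1[s_k] + 2 s_k^T b_m' + b_{mm}=0$ for $(k,m)=(1,n-1),(2,n)$, together with a cross-equation for the $(n-1,n)$-entry, where $b_m'$ denotes the $m$-th column of $B_2$. Completing the square converts each diagonal equation into the representation problem $B_1[u_k]=(b_m')^T B_1^{-1}b_m' - b_{mm}$ with $u_k=s_k+B_1^{-1}b_m'$. Using the valuation bounds on $B_1^{-1}$ from [\cite{Ike-Kat}, Lemma 3.12] (the same estimates used in Lemma \ref{lem.7.3}), I would verify that the right-hand side has the order and parity required to invoke Lemma \ref{lem.7.3} on $B^{(n-2)}$, which produces the required $u_k$, hence $s_k$, with $\frko$-integrality and $G_\ua$-compatibility. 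The cross-equation is linear in $s_2$ once $s_1$ is fixed, and is solved analogously. The resulting $C_{33}$ will have both diagonal entries in $\vpi^{a_{n-1}+1}\frko$ and off-diagonal a unit times $\vpi^{a_{n-1}}/2$, so Lemma \ref{lem.7.1} supplies $V\in\GL_2(\frko)$ normalizing it exactly to $\vpi^{a_{n-1}}H_1$.

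\emph{The main obstacle: (CEI-2).} With $U$ as above, a direct calculation gives $C_{13}=(B_1S+B_2)V=(B_1u_1\,|\,B_1u_2)V$, so the entries $c_{i_k, n-1}, c_{i_k, n}$ featuring in $\kappa(C)$ are $\frko$-linear combinations of the $i_k$-coordinates of $B_1 u_j$. The strict bound $\kappa(C)<\ord(\frkD_{B^{(n-2)}})$ is not automatic, and securing it is the heart of the proof. My plan is to exploit the free unit parameter $c$ in the construction of Lemma \ref{lem.7.3}, together with the choice of $V$, to force at least one of $2\ord(2c_{i_k,m})-a_{i_k}-a_m$ to attain the minimum permitted by the sharp estimates \eqref{eq.7.3.1}--\eqref{eq.7.3.4} for $B_1^{-1}$. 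The hardest point will be ensuring that the $\frko$-integrality of $s_k$, the $G_\ua$-constraints, and the strict inequality $\kappa(C)<\ord(\frkD_{B^{(n-2)}})$ can all be achieved simultaneously; I expect this to require case analysis on the parities of $a_{i_1}, a_{i_2}, a_{n-1}$ and on the positions of $i_1, i_2$ in the Jordan blocks of $B$.
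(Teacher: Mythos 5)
Your combinatorial opening (that $n-1,n\notin\calp^0$, hence $i_1,i_2\le n-2$ and $n\ge 4$) is correct and matches the paper, but the proof has a genuine gap exactly at (CEI-2), which is the real content of the lemma: you leave it as a ``plan,'' and the plan as described cannot work. The free unit parameter $c$ in Lemma \ref{lem.7.3} affects no valuations whatsoever, so it gives no leverage on $\kappa(C)$; and a change of basis $V\in GL_2(\frko)$ in the last two coordinates leaves $\kappa$ invariant (as remarked after Definition \ref{def.7.2}), so it can at best relocate which of the two columns attains the minimum, never create the strict inequality $\kappa(C)<\ord(\frkD_{C^{(n-2)}})$. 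What is actually needed is a case distinction on whether $\kappa(B)<\ord(\frkD_{B^{(n-2)}})$ already holds. If it does, one only has to swap the last two basis vectors when the minimum sits in column $n$, and normalize the corner block by Lemma \ref{lem.7.1}. If it does not, the paper performs a deliberate perturbation: choose $k$ so that $a_{i_k}+a_{n-1}+\ord(\frkD_{B^{(n-2)}})$ is odd (possible because $a_{i_1}\not\equiv a_{i_2}\bmod 2$), let $x$ be supported at $i_k$ with $\ord$ pinned so that $2\ord(2x_{i_k})-a_{i_k}-a_{n-1}=\ord(\frkD_{B^{(n-2)}})-1$, and add $\mathbf{y}=(B^{(n-2)})^{-1}x$ to the $(n-1)$st column. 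Since $\kappa(B)\ge\ord(\frkD_{B^{(n-2)}})$, the new entry $b_{i_k,n-1}+x_{i_k}$ has exactly this normalized order, which secures (CEI-2); the estimates of Lemma \ref{lem.7.3} ($\ord(y_i)\ge(a_{n-1}-a_i)/2$ and $\ord(B^{(n-2)}[\mathbf{y}]+2b_{i_1,n-1}y_{i_1}+2b_{i_2,n-1}y_{i_2})\ge a_{n-1}$) keep the form strongly reduced with corner block $\vpi^{a_{n-1}}\left(\begin{smallmatrix} a' & 1/2 \\ 1/2 & 0\end{smallmatrix}\right)$, and Lemma \ref{lem.7.1} then restores (CEI-1). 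None of this mechanism appears in your sketch, and without it the lemma is not proved.

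A second problem is your intended use of Lemma \ref{lem.7.3} for (CEI-1). You set up the exact equations $B_1[s_k]+2\,{}^ts_k b'_m+b_{mm}=0$ and propose to solve the resulting representation problem ``by Lemma \ref{lem.7.3}.'' But that lemma is not a representation statement: given an order $A$ of the correct parity it produces one specific vector $\mathbf{y}=C^{-1}x$ with $\ord(C[\mathbf{y}])=A$; it does not represent a prescribed element of $\frko$ by $B^{(n-2)}$, and over a dyadic field such an exact representation can genuinely fail. Fortunately exactness is unnecessary: it suffices to arrange the corner block to be $\vpi^{a_{n-1}}$ times a primitive unramified binary form whose diagonal product lies in $\frkp$ (in the paper, of the shape $\left(\begin{smallmatrix} a & 1/2 \\ 1/2 & 0\end{smallmatrix}\right)$), after which Lemma \ref{lem.7.1} converts it exactly to $\vpi^{a_{n-1}}\left(\begin{smallmatrix} 0 & 1/2 \\ 1/2 & 0\end{smallmatrix}\right)$ without disturbing $C^{(n-2)}$ or $\kappa$. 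So the (CEI-0)/(CEI-1) part should be reorganized along these lines, and (CEI-2) requires the explicit valuation-pinning perturbation, not a parameter count over $c$ and $V$.
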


\begin{proof} Since $B$ belongs to category (I), $B^{(n-2)}$ is a reduced  form with GK-type $(a_1,\ldots,a_{n-2})$ and $i_1,i_2 \le n-2$. Then either $a_{i_1} \equiv a_{n-1} \text{ mod $2$}$ or $a_{i_2} \equiv a_{n-1} \text{ mod  $2$}$.
We note that the matrix $B^{(n-2)}(i_1,i_2;i_1,i_2)$ is a good form.
Hence, in view of [\cite{Ike-Kat}, Lemma 4.3] and Lemma \ref{lem.7.1},  in the same way as in the proof of Lemma \ref{lem.7.2}, we may assume that
$B$ is a strongly reduced form such that $ (b_{ij})_{n-1 \le i,j \le n}=\vpi^{a_{n-1}} \left(\begin{smallmatrix} a & 1/2 \\ 1/2 & 0 \end{smallmatrix}\right)$ with some $a \in \frko$.
First assume that $\kappa(B)<\ord(\frkD_{B^{(n-2)}})$. Then, again by Lemma \ref{lem.7.1}, we may assume that $a=0$.
Moreover, if $\ord(2b_{i_l,n-1}) -a_{i_l}/2  > \ord(2b_{i_k,n})-a_{i_k}/2$
 for any $l=1,2$, replacing $B$ by $B\left[1_{n-2} \bot \smallmattwo(0;1;1;0)\right]$,
we obtain the  matrix $C$ satisfying the condition (CEI-2). 
Next assume that $\kappa(B) \ge \ord(\frkD_{B^{(n-2)}})$.
Without loss of generality, we may assume that $a_{i_1}+a_{n-1}+\ord(\frkD_{B^{(n-2)}})$ is odd. 
For each $i$ such that $1 \le i \le n-2$ put 
\[x_i=\begin{cases} 2^{-1}\vpi^{(a_{n-1}+a_{i_1}+\ord(\frkD_{B^{(n-2)}})-1)/2} & \text{ if } i=i_1 \\
0 & \text{ otherwise},
\end{cases}\]
and ${\bf y}=\left(\begin{matrix}y_1 \\ \vdots \\ y_{n-2} \end{matrix}\right)=(B^{(n-2)})^{-1}\left(\begin{matrix} x_1 \\ \vdots \\ x_{n-2} \end{matrix}\right)$. 
Then, by Lemma\ref{lem.7.3}, we have 
\[\ord(y_i) \ge (a_{n-1}-a_i)/2 \text{ for any } 1 \le i \le n-2,\]
\[2\ord(2(x_{i_1}+b_{i_1,n-1}))-a_{i_1}-a_{n-1}<\ord(\frkD_{B^{(n-2)}})\] 
and 
\[\ord(B^{(n-2)}[{\bf y}]+2b_{i_1,n-1}y_{i_1}+2b_{i_2,n-1}y_{i_2}) \ge a_{n-1}.\]
This implies that the matrix 
$B'=B\left[\left(\begin{matrix} 1_{n-2} & {\bf y} & O \\ O & 1 & 0 \\ O & 0 & 1 \end{matrix}\right)\right]$ 
 is a strongly reduced form, satisfies the conditions (CEI-1) and (CEI-2), and its lower right $2 \times 2$ block is $\vpi^{a_n}\left(\begin{matrix} a' & 1/2 \\ 1/2 & 0 \end{matrix}\right)$ with $a' \in \frko$. Thus, by Lemma \ref{lem.7.1}, we obtain the matrix $C$ satisfying the condition (CEI-1).
\end{proof}

\begin{lemma}
\label{lem.7.5} Let $n$ be an even integer.  Let $B \in \calh_n(\frko)$ be a strongly reduced form belonging to category {\rm (I)} with $\GK(B)=(a_1,\ldots,a_n)$. Assume  that $a_1+\cdots +a_n$ is odd and let $i_1=i_1(B)$ and $i_2=i_2(B)$ be the integers in Definition \ref{lem.7.1}, and put $r_k=\ord(2b_{i_k,n-1})$ for $k=1,2$. Moreover assume that $B$ satisfies the conditions {\rm (CEI-1)} and 
{\rm (CEI-2)}.
Then we have the following:
\begin{itemize}
\item[(1)] $B^{(n-1)}$ is non-degenerate and 
\[e_{B^{(n-1)}}=a_{n-1}+e_{B^{(n-2)}}+\kappa(B)+1, \]
and 
\begin{align*}
&\eta_{B^{(n-1)}}=\eta_{B^{(n-2)}}  \langle D_{B^{(n-2)}(i_1;i_1)},D_{B^{(n-2)}} \rangle. 
\end{align*}
\item[(2)] Assume that 
\[a_{n-1} \ge \max(2\lambda_{\frkl_{B^{(n-2)}}}+2e_0, 2\ord(\det B^{(n-2)}) +2e_0(n+1)+2),\]
where  $\lambda_{\frkl_{B^{(n-2)}}}$ is the integer defined in (2.3) of Lemma \ref{lem.5.1}. Then for any $x =(x_1,\ldots,x_{n-1}) \in M_{1,n-1}(\frko)$, $B_x $ is non-degenerate, and 
\[e_{B_x}=a_{n-1}+e_{B^{(n-2)}}+\kappa(B)+1. \]
Moreover, if $\kappa(B)=1$, then
\[\eta_{B_x}=\eta_{B^{(n-1)}}
\langle 1+x_{n-1} v_1 \vpi^{\ord(\frkD_{B^{(n-2)}})-1}, D_{B^{(n-2)}} \rangle,\]
where $v_1$ is an element of $\frko^{\times}$ independent of $x$. In particular, $\eta_{B_x}$ is uniquely determined by $x_{n-1} \ {\rm mod} \ \frkp$.
\end{itemize}
\end{lemma}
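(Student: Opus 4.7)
For Part (1), I first apply (CEI-1) to write $B^{(n-1)}$ as the block matrix having $B^{(n-2)}$ in the upper-left, column vector $r=(b_{j,n-1})_{j=1}^{n-2}$ to its right, and $0$ in the $(n-1,n-1)$ corner. Strong reducedness of $B$, together with the fact that the only indices with $\sigma(i_k)=i_k$ are $i_1,i_2\in\calp^0$, forces $r_j=0$ for $j\notin\{i_1,i_2\}$. Block-diagonalizing over $F$ shows that $B^{(n-1)}$ is $GL_{n-1}(F)$-equivalent to $B^{(n-2)}\bot(c)$ with $-c=s_1^2c^{*}_{i_1,i_1}+2s_1s_2c^{*}_{i_1,i_2}+s_2^2c^{*}_{i_2,i_2}$, where $s_k=b_{i_k,n-1}$ and $c^{*}_{ij}$ are the entries of $(B^{(n-2)})^{-1}$.

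The proof of Lemma \ref{lem.7.3} records entry-order estimates for $(B^{(n-2)})^{-1}$, which I apply to all three terms above. Condition (CEI-2), $\kappa(B)<\frkd:=\ord(\frkD_{B^{(n-2)}})$, together with the admissibility constraint $a_{i_1}\not\equiv a_{i_2}\pmod 2$, forces the diagonal term $s_k^2c^{*}_{i_k,i_k}$ realizing $\kappa(B)$ (say $k=1$) to have strictly smaller order than the other two. This yields $\ord(c)=\kappa(B)+a_{n-1}+1-\frkd$; combined with Cramer's rule $c^{*}_{i_1,i_1}=\det B^{(n-2)}(i_1;i_1)/\det B^{(n-2)}$, it pins down the class of $-c$ modulo $N_{E/F}(E^{\times})$ where $E=F(\sqrt{D_{B^{(n-2)}}})$ (the identification takes place modulo norms rather than squares, using $1+\frkp^{\frkd}\subseteq N_{E/F}(E^{\times})$ from local class field theory). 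Combining the orthogonal-sum multiplicativity $\varepsilon_{M\bot(c)}=\varepsilon_M\langle\det M,c\rangle$, the $\varepsilon$-to-$\eta$ conversion formulas in ranks $2m$ and $2m+1$, and the congruence $D_A\equiv(-1)^{[n/2]}\det A\pmod{F^{\times 2}}$, the ratio $\eta_{B^{(n-1)}}/\eta_{B^{(n-2)}}$ collapses to $\langle D_{B^{(n-2)}(i_1;i_1)},D_{B^{(n-2)}}\rangle$.

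For Part (2), the size assumption on $a_{n-1}$ reduces everything to Part (1). The matrix $B_x^{(n-2)}-B^{(n-2)}$ is supported only at positions indexed by $\{i_1,i_2\}$, with each nonzero entry of order at least $1+\ord(b_{n,i_k})$; the first half of the size assumption, $a_{n-1}\ge 2\lambda_{\frkl_{B^{(n-2)}}}+2e_0$, places this perturbation in $\vpi^{\lambda_{\frkl_{B^{(n-2)}}}}\calh_{n-2}(\frko)$, so (2.3) of Lemma \ref{lem.5.1} gives $B_x^{(n-2)}\sim B^{(n-2)}$. Repeating the block-diagonalization for $B_x$, we find that $B_x$ is $GL_{n-1}(F)$-equivalent to $B^{(n-2)}\bot(c_x)$ with $c_x=d_x-{}^tr_x(B_x^{(n-2)})^{-1}r_x$, where $d_x=x_{n-1}\vpi^{a_{n-1}+1}$ is the $(n-1,n-1)$-entry of $B_x$ computed from (CEI-1). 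The second half of the size assumption ensures that every perturbation entering $c_x$ has order strictly greater than the leading order of $c$, hence $\ord(c_x)=\ord(c)$ and the claimed $e$-formula for $B_x$ follows.

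For the $\eta$-formula when $\kappa(B)=1$ (so $\frkd\ge 2$), the delicate step is identifying $c_x/c$ modulo $N_{E/F}(E^{\times})$. A careful order-count, separating the contributions of $d_x$, of $r_x-r_{n-1}$, and of $(B_x^{(n-2)})^{-1}-(B^{(n-2)})^{-1}$, shows that modulo $N_{E/F}(E^{\times})$ only $d_x$ survives, giving $c_x\equiv c\cdot(1+x_{n-1}v_1\vpi^{\frkd-1})\bmod N_{E/F}(E^{\times})$ for some $v_1\in\frko^{\times}$ depending only on $B$. Plugging this into the ratio formula from Part (1) produces the claimed Hilbert symbol; its dependence only on $x_{n-1}\bmod\frkp$ reduces to $1+\frkp^{\frkd}\subseteq N_{E/F}(E^{\times})$, the defining property of the conductor exponent $\frkd$. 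The main obstacle is precisely this last bookkeeping: verifying that the other perturbations to $c_x$ lie in $N_{E/F}(E^{\times})$ and that the coefficient $v_1$ of the $d_x$-term is independent of $x$, both of which require delicate dyadic tracking of the $\{i_1,i_2\}$-block of $(B^{(n-2)})^{-1}$ in light of (CEI-1)--(CEI-2).
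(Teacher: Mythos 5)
Your bordered-matrix computation and order estimates do reproduce the paper's proof of the two valuation statements: the paper expands $\det B^{(n-1)}$ along the last row and column using the cofactors $A_{kl}=-\det B^{(n-2)}(i_k;i_l)$ rather than the entries of $(B^{(n-2)})^{-1}$, which is the same calculation, and your use of (2.3) of Lemma \ref{lem.5.1} to get $B_x^{(n-2)}\sim B^{(n-2)}$ and to isolate the dominant term of $\det B_x$ matches Step (2) of the paper. The genuine gap is in both $\eta$-statements. Write $\frkd=\ord(\frkD_{B^{(n-2)}})$, so $\frkd\ge\kappa(B)+1\ge 2$ under (CEI-2). You determine the class of $-c$ (and of $c_x$) modulo $N_{E/F}(E^{\times})$ by keeping only the leading diagonal term and invoking $1+\frkp^{\frkd}\subseteq N_{E/F}(E^{\times})$. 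But the discarded terms are not small enough for that: the second diagonal term $s_2^2c^{*}_{i_2,i_2}$ exceeds the leading term $s_1^2c^{*}_{i_1,i_1}$ only by $(2r_2-a_{i_2})-(2r_1-a_{i_1})$, an odd integer which can equal $1$, and the cross term only by roughly $\frkd/2$; hence the ratio $-c/(s_1^2c^{*}_{i_1,i_1})$ is only known to lie in $1+\frkp$, not in $1+\frkp^{\frkd}$, so it need not be a norm and the conductor argument does not pin down its class. The same problem occurs more acutely in (2): the border perturbation $\vpi x_{n-1}b_{i_k,n}$ changes $b_{i_k,n-1}$ by a relative order that can be exactly $1$ (the paper's (7.4.5) asserts only a congruence mod $\frkp$), so the claim that ``modulo norms only $d_x$ survives'' cannot be obtained by order bookkeeping, however delicate.

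What is missing is the exact mechanism the paper uses instead. Since $\det B^{(n-1)}$ is a value of the binary form with Gram matrix $(A_{kl})_{k,l=1,2}$, and since by Jacobi's complementary-minor identity $\det\bigl((A_{kl})\bigr)=\det B^{(n-2)}\det B^{(n-2)}(i_1,i_2;i_1,i_2)$, one gets the exact relation $\langle A_{11}^{-1}\det B^{(n-1)},\,-\det B^{(n-2)}\det B^{(n-2)}(i_1,i_2;i_1,i_2)\rangle=1$; the unwanted factor is then removed because $B^{(n-2)}(i_1,i_2;i_1,i_2)$ is a \emph{good} form and $\ord(A_{11}^{-1}\det B^{(n-1)})=2(r_1-e_0)$ is even (remark in Definition \ref{def.7.1}). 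This argument is insensitive to the subleading terms, which is why no congruence to modulus $\frkp^{\frkd}$ is needed. Likewise in (2) the paper never compares $c_x$ with $c$: it compares $\det B_x$ only with $\det\widetilde{B_x}$ (the discrepancy there is exactly $\det B_x^{(n-2)}b_{x;n-1,n-1}$, controlled mod $\det B^{(n-2)}\frkp^{a_{n-1}+2}$, which for $\kappa(B)=1$ sits precisely at relative order $\frkd-1$ and one beyond, hence within conductor range), and it re-derives the class of $\det\widetilde{B_x}$ by the same Jacobi/good-form argument, absorbing the $O(\vpi)$ border perturbations. Without these two ingredients (the complementary-minor identity and the good-form property of $B^{(n-2)}(i_1,i_2;i_1,i_2)$), your route to the formulas for $\eta_{B^{(n-1)}}$ and $\eta_{B_x}$ does not close, even though the statements themselves are correct.
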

\begin{proof} Without loss of generality we may assume that $\kappa(B)=2r_1-a_{n-1}-a_{i_1}$. \\
(1) Put 
$A_{kl}=-\det B^{(n-2)}(i_k;i_l)$, and  $D_{kl}=A_{kl}b_{i_k,n-1}b_{i_l,n-1}$  for $k,l=1,2$. 
Then we have  
\begin{align*}
&\det B^{(n-1)}=A_{11}b_{i_1,n-1}^2+2A_{12}(-1)^{i_1+i_2}b_{i_1,n-1}b_{i_2,n-1}+A_{22}b_{i_2,n-1}^2. 
\end{align*}
First we prove 
\begin{align}
\label{eq.7.4.1}
&\ord(\det B^{(n-1)})=\ord(A_{11})+2(r_1-e_0).\tag{7.4.1}
\end{align}
To prove this, put $(b_{ij}^*)_{1 \le i,j \le n-2}=(B^{(n-2)})^{-1}$. Then we note that $A_{kl}=(-1)^{i_k+i_l+1}b_{i_l,i_k}^* \det B^{(n-2)}$ and by [\cite{Ike-Kat}, Theorem 0.1] we have 
\begin{align}
\label{eq.7.4.2}
\ord(\det B^{(n-2)})=\sum_{i=1}^{n-2} a_i +\ord(\frkD_{B^{(n-2)}})-1-(n-2)e_0.\tag{7.4.2}
\end{align}
Hence, by (\ref{eq.7.3.1}),  we have
\begin{align}
\label{eq.7.4.3}
\ord(A_{kk})=\sum_{1 \le i \le n-2 \atop i \not=i_k} a_i -(n-4)e_0  \tag{7.4.3}
\end{align}
and 
\begin{align}
\label{eq.7.4.4}
&\ord(D_{kk}) \tag{7.4.4} \\
&=\ord(A_{kk})+2(r_k-e_0) \notag\\
&=\sum_{1 \le i \le n-2 \atop i \not=i_k} a_i -(n-2)e_0+ 2r_k  \notag
\end{align}
for $k=1,2$, and by (\ref{eq.7.3.3})
\begin{align*}
\ord(2D_{12})&= e_0+\ord(A_{12})+(r_1-e_0)+(r_2-e_0) \\
& \ge \sum_{i=1}^{n-2}a_i -(a_{i_1}+a_{i_2})/2+r_1+r_2\\
&+(\ord(\frkD_{B^{(n-2)}})-1)/2-(n-2)e_0.
\end{align*}
We note that  we have $r_1-a_{i_1}/2 <r_2 -a_{i_2}/2$. 
Hence we have $\ord(D_{11}) < \ord(D_{22})$ and $\ord(D_{11}) < \ord(2D_{12})$, and this proves (\ref{eq.7.4.1}). 
Hence we prove  the former half of (1) in view of (\ref{eq.7.4.4}).  
We prove the latter half of (1).
To prove this, we recall that  
\[\eta_{B^{(n-1)}}=\eta_{B^{(n-2)}}\langle (-1)^{(n-2)/2}\det B^{(n-1)},(-1)^{(n-2)/2}\det B^{(n-2)}\rangle \]
in view of  [\cite{Ike-Kat}, Lemma 3.4].  By Example 3 in Chapter II, Section 5 of \cite{Sat}, we have 
\[\det \left(\begin{matrix} A_{11} & A_{12} \\
                            A_{12} & A_{22}
       \end{matrix}\right)
=\det B^{(n-2)} \det B^{(n-2)}(i_1,i_2;i_1,i_2).\]
Hence we have
\[\det B^{(n-1)}=A_{11}u^2+A_{11}\det B^{(n-2)}\det B^{(n-2)}(i_1,i_2;i_1,i_2)v^2\]
with some $u,v \in F$. 
Hence we have
\[\langle A_{11}^{-1} \det B^{(n-1)},-\det B^{(n-2)}\det B^{(n-2)}(i_1,i_2;i_1.i_2)\rangle=1.\]
Moreover we note that 
$(b_{i_1,n-1}^2A_{11})^{-1}\det B^{(n-1)} \in \frko^{\times}$ and 
$B^{(n-2)}(i_1,i_2;i_1,i_2)$ is a good  form (for the definition of `good form', see Definition \ref{def.7.1}).
 Then,  by the remark in Definition \ref{def.7.1}, 
 we have
\[\langle A_{11}^{-1} \det B^{(n-1)}, (-1)^{(n-4)/2}\det B^{(n-2)}(i_1,i_2;i_1,i_2) \rangle=1.\]         
This proves the latter half of (1).  \\
(2) For $x =(x_1,\ldots,x_{n-1}) \in M_{1,n-1}(\frko)$,  write  $B_x=(b_{x;ij})_{(n-1) \times (n-1)}$.
 We have 
 \[B_x=B^{(n-1)}+ \vpi {}^tx {\bf b} + \vpi {}^t{\bf b}x,\]
 where ${\bf b}=(b_{i,n})_{1 \le i \le n-1} \in M_{1,n-1}(\frko)$.  Hence, for any $1 \le i,j \le n-1$, we have
 \[b_{x;ij}=b_{ij}+x_i\vpi b_{jn}+x_j\vpi b_{in}.\]
 We have
\[b_{in}=
\begin{cases}
b_{i_k,n}  & \text{ if } i=i_k \text{ with } k=1,2 \\

 0                                          & \text{ if } i\not=i_1,i_2, i \le n-2 \\
 2^{-1} \vpi^{a_n}                     & \text{ if } i=n-1,
 \end{cases}\]
 and $\ord(b_{i_k,n}) > (a_n+a_{i_k})/2-e_0$.  We note that $a_n=a_{n-1}$. Hence,  by the 
 assumption on $a_{n-1}$,
we have 
\[\ord(b_{i_k,n}) \ge \ord(\det B^{(n-2)}) +(n-2)e_0\] 
for any $1 \le k \le 2$, and 
\[\ord(2^{-1}\vpi^{a_n}) \ge  \ord(\det B^{(n-2)}) +(n-2)e_0.\]
 Hence we have
\begin{align}
\label{eq.7.4.5}
b_{x;i_1,n-1} b_{i_1,n-1}^{-1} \equiv 1 \text{ mod } \frkp, \tag{7.4.5}
\end{align}
and 
 \[b_{x;ij} \equiv b_{ij} \ {\rm mod} \  \det B^{(n-2)}\frkp^{(n-2)e_0+1}\]
for any $1 \le i,j \le n-2$. We note that $b_{ij}, b_{x;ij} \in 2^{-1}\frko$, and hence
\begin{align}
\label{eq.7.4.6}
\det B_x^{(n-2)}(i;j) \equiv \det B^{(n-2)}(i;j) \ {\rm mod} \ \det B^{(n-2)}\frkp^{2e_0+1}
\tag{7.4.6}
\end{align}
for any $1 \le i,j \le n-2$, and 
\begin{align}
\label{eq.7.4.7}
\det B_x^{(n-2)} \equiv \det B^{(n-2)} \ {\rm mod} \ \det B^{(n-2)}\frkp^{e_0+1}.
\tag{7.4.7}
\end{align}
 Hence, by (\ref{eq.7.4.2}) and (\ref{eq.7.4.3}), we have 
\[
\ord(\det B_x^{(n-2)}(i_k;i_k))=\ord(\det B^{(n-2)}(i_k;i_k))\]
for $k=1,2$. Moreover we have 
\[B_x^{(n-2)}  \equiv  B^{(n-2)} \ {\rm mod} \ \vpi^{\lambda_{\frkl_{B^{(n-2)}}}}S_{n-2}(\frko),\]
and hence, by (2.3) of Lemma \ref{lem.5.1},  $B_x^{(n-2)}$ is $GL_{n-2}(\frko)$-equivalent to $B^{(n-2)}$.
We have  
 \[b_{x;i,n-1}=
\begin{cases} 2^{-1}\vpi^{a_n+1}x_i                              & \text{ if } i\not=i_1,i_2, i \le n-2\\
b_{i_k,n-1}+ \vpi b_{i_k,n}x_{n-1}+ 2^{-1}\vpi^{a_n+1}x_{i_k}    & \text{ if } i=i_k \text{ with } k=1,2\\
\vpi^{a_{n-1}+1} x_{n-1}& \text { if } i=n-1,
\end{cases}\]
\[\ord(b_{i_k,n-1}) > (a_{n-1}+a_{i_k})/2-e_0\]
 and  
\[a_{n-1}/2 \ge \ord(\det B^{(n-2)})+e_0(n-2)+2e_0+1  \ge 2e_0+1,\]
 and hence $\ord(b_{i_k,n-1}) \ge e_0+1$.
Similarly $\ord(b_{i_k,n}) \ge e_0+1$ and $\ord(2^{-1}\varpi^{a_n}) \ge e_0+1$. Hence
$\ord(b_{x;i,n-1}) \ge e_0+1$ for any $1 \le i \le n-2$.  
Hence
\[\det B_x^{(n-2)}(i;j) b_{x;i,n-1}b_{x;j,n-1} \equiv 0 \text { mod } \det B^{(n-2)}\frkp^{a_{n-1}+2}\]
for any integers  $1 \le i,j \le n-2$ such that $i \not= i_1,i_2$ or $j \not= i_1,i_2$.
Similarly we have 
\begin{align*}
&\det B_x^{(n-2)}(i_k;i_l)b_{x;i_k,n-1}b_{x;i_l,n-1} \\
&\equiv (b_{i_k,n-1}+\vpi x_{n-1}b_{i_k,n})(b_{i_l,n-1}+\vpi x_{n-1}b_{i_l,n})\\
&\times  \det B^{(n-2)}(i_k;i_l) \text { mod } \det B^{(n-2)}\frkp^{a_{n-1}+2} 
\end{align*}
for any $k,l=1,2$, and  
\[\det B_x^{(n-2)}b_{x;n-1,n-1} \equiv \det B^{(n-2)} \vpi^{a_{n-1}+1} x_{n-1}  \text  { mod }  \det B^{(n-2)}\frkp^{a_{n-1}+2} .\]
Put 
\[\widetilde {B_x}= \mattwo( {B_x^{(n-2)}} ; {\begin{matrix}O \\ b_{x;i_1,n-1} \\ O \\ b_{x;i_2,n-1}\\ O \end{matrix}};{\begin{matrix}O & b_{x;i_1,n-1} & O & b_{x;i_2,n-1}& O \end{matrix}}; 0).\]
Then we have 
\[\det B_x=\det \widetilde B_x +\det B_x^{(n-2)}b_{x;n-1,n-1}.\]
By  (\ref{eq.7.3.2}),(\ref{eq.7.3.3}),(\ref{eq.7.3.4}), (\ref{eq.7.4.6}) and (\ref{eq.7.4.7}), we have 
\begin{align}
\label{eq.7.4.8}
&\det \widetilde B_x \tag{7.4.8} \\
&=\sum_{1 \le i,j \le n-2} (-1)^{i+j+1} \det B_x^{(n-2)}(i;j)b_{x;i,n-1}b_{x;j,n-1} \notag \\
& \equiv \sum_{1 \le k,l \le 2} (b_{i_k,n-1}+\vpi x_{n-1}b_{i_k,n})(b_{i_l,n-1}+\vpi x_{n-1}b_{i_l,n}) \notag \\ 
& \times (-1)^{i_k+i_l}A_{kl}   \text{ mod }   
 \det B^{(n-2)}\frkp^{a_{n-1}+2}. \notag
\end{align}
and 
\begin{align*}
&\det B_x^{(n-2)}b_{x;n-1,n-1} \equiv \det B^{(n-2)} x_{n-1}\vpi^{a_{n-1}+1} \text{ mod }    \det B^{(n-2)}\frkp^{a_{n-1}+2},
\end{align*}
where $A_{kl}$ is that in the proof of (1).  Hence, by (1) and  (\ref{eq.7.4.2}), we have 
\[\ord(\det B_x^{(n-2)} b_{x;n-1,n-1})- \ord(\det \widetilde {B_x})=\ord(\frkD_{B^{(n-2)}})-\kappa(B),\]
and in particular
\[\ord(\det \widetilde {B_x})=\ord(\det B^{(n-1)})<\ord(\vpi^{a_{n-1}+1}\det B_x^{(n-2)}).\] 
Hence $\ord(\det B_x)=\ord(\det B^{(n-1)})$.
This proves the former half of (2).  

Now assume that $\kappa(B)=1$. Then 
\begin{align}
\label{eq.7.4.9}
&\ord(\det B^{(n-2)} \vpi^{a_{n-1}+1})-\ord(b_{i_1,n-1}^2A_{11})=\ord(\frkD_{B^{(n-2)}})-1 \tag{7.4.9} 
\end{align}
By (\ref{eq.7.4.5}) and (\ref{eq.7.4.8}) we have 
\[\det \widetilde {B_x}(b_{i_1,n-1}^2A_{11})^{-1} \equiv  1 \ {\rm mod} \ \frkp. \]
Put $v_1=(b_{i_1,n-1}^2A_{11}\vpi^{\ord(\frkD_{B^{(n-2)}})-1}  )^{-1}\det B^{(n-2)} \vpi^{a_{n-1}+1}$. Then, by  (\ref{eq.7.4.9}),  we have $v_1 \in \frko^{\times}$ and  
\[\det B_x (\det \widetilde {B_x})^{-1} \equiv 1+x_{n-1}v_1 \vpi^{\ord(\frkD_{B^{(n-2)}})-1} \text{ mod } \frkD_{B^{(n-2)}}.\]
We have
\begin{align*}
\eta_{B_x} &=\eta_{B_x^{(n-2)}}\langle B_{B_x},D_{B_x^{(n-2)}} \rangle \\
&=\eta_{B^{(n-2)}}\langle (-1)^{(n-2)/2}( \det \widetilde B_x +\det B_x^{(n-2)}b_{x;n-1,n-1}),D_{B^{(n-2)}} \rangle.
\end{align*}
By using the same argument as in the proof of the latter half of (1), we have 
\[\eta_{B^{(n-2)}}\langle D_{\widetilde {B_x}},D_{B^{(n-2)}}\rangle =\eta_{B^{(n-1)}},\] 
and  this proves the equality of in the latter half of (2).
We note that the conductor of the character 
\[\frko^{\times} \ni z \mapsto \langle z, D_{B^{(n-2)}} \rangle\] 
is $\frkD_{B^{(n-2)}}$. Hence we prove that $\eta_{B_x}$ is uniquely determined by $x_{n-1} \text{ mod } \frkp$.
\end{proof}

 For integers $e,\widetilde e$,  a real number $\xi$, let $C(e,\widetilde e,\xi;Y,X)$ and $D(e,\widetilde e,\xi;Y,X)$ be the rational functions in $Y^{1/2},X^{1/2}$ defined in Definition \ref{def.4.3}, and for an $\EGK$ datum $G$, let $\widetilde \calf(G;Y,X)$ be the Laurent polynomial 
defined in Section 4.   

{\bf Proof of Theorem \ref{th.1.1}.}  

We prove 
\begin{align*}
\tag{$\mathrm{EF}_n$}
&\widetilde F(B,X)=\widetilde \calf(\EGK(B);q^{1/2},X) \\
&\text{ for any reduced form } B \in \calh_n(\frko)
\end{align*}
by induction on $n$. This proves Theorem \ref{th.1.1} in view of Theorem \ref{th.3.2}.  Let $B\in \calh_n(\frko )$ be a reduced form of type $(\ua, \sig). $
Put $\ua=(a_1,\ldots,a_n)$. For a non-negative integer $i \le n$ let $\frke_i=\frke(\ua)_i$ be the integer in Definition \ref{def.4.3}, and 
let $\calm^{0}(\ua)$ be the set in Definition \ref{def.3.4}. 
By [\cite{Ike-Kat}, Theorem 0.1], we have $\frke_B=\frke_n$.

Assume that $n=1$. 
Then, by Theorem \ref{th.5.3}, we have
\[
\widetilde F(B,X)=\sum_{i=0}^{a_1} X^{i-(a_1/2)}.
\]
Thus ($\mathrm{EF}_1$) holds.
Now assume  that $n \ge 2$, and assume that $(\mathrm{EF}_{n'}$) holds for any positive integer $n'<n$.

\noindent
{\bf Step 1:} \ First we assume that $B$ belongs to category (II) or that 
$B$ belongs to category (I) and $n+a_1+\cdots+a_{2[n/2]}$ is even. Then, $B$ is $GL_n(\frko)$-equivalent to a reduced form $B_1$ such that $B_1^{(n-1)}$ is a reduced form with $\GK(B_1^{(n-1)})=\ua^{(n-1)}$. In fact, $B^{(n-1)}$ is automatically a reduced form and $\GK(B^{(n-1)})=\ua^{(n-1)}$ if $B$ belongs to category (II).
Assume that $B$ belongs to category (I).
Assume that both $n$ and $a_1+\cdots+a_{n-1}$ are odd.
By Lemma \ref{lem.7.1},  we may assume that $\ord(b_{n-1, n-1})=a_{n-1}$.
Then $B^{(n-1)}$ is a reduced form with $\GK(B^{(n-1)})=\ua^{(n-1)}$.
The case that  both $n$ and $a_1+\cdots+a_{n-1}$ are even is similar. Therefore we may assume that $B^{(n-1)}$ is a reduced form with $\GK(B^{(n-1)})=\ua^{(n-1)}$.
By Proposition \ref{prop.4.5}, there exists $H\in\mathcal{NEGK}_n$ such that $\Ups_n(H)=\EGK(B)$ and $\Ups_{n-1}(H')=\EGK(B^{(n-1)})$.

For each  $x \in M_{1,n-1}(\frko)$, let $B_x$ be the matrix in Definition \ref{def.5.3}. 
By definition, $B_{x}-B^{(n-1)} \in \calm^0(\ua^{(n-1)})$ for any $x \in M_{1,n-1}(\frko)$. 
Hence, by Theorem \ref{th.3.3},  $B_x$ is also a reduced  form with $\EGK(B_x)=\EGK(B^{(n-1)})$.
In particular, we have $\frke_{B_x}=\frke_{B^{(n-1)}}=\frke_{n-1}$ for any $x \in M_{1,n-1}(\frko)$.
Hence, $\ord(\det (2B_x)) \le \frke_{n-1}+2e+1$ for any $x \in M_{1,n-1}(\frko)$, and we can take a  positive integer $m_0$ satisfying the condition in (1) of Theorem \ref{th.5.2}.

Assume that $n$ is odd.
Then, by Theorem \ref{th.5.3}, we have
\begin{align*}
&\widetilde F(B,X) \\
=&q^{-m_0(n-1)}\sum_{x \in M_{1,n-1}(\frko )/\frkp^{m_0}M_{1,n-1}(\frko )}\Bigl\{ D(\frke_n,\frke_{n-1},\xi_{B^{(n-1)}};X)\widetilde F(B_{x},q^{1/2}X) \\
&\quad +\eta_B D(\frke_n,\frke_{n-1},\xi_{B^{(n-1)}};X^{-1})\widetilde F(B_{x},q^{1/2}X^{-1})\Bigr\}. 
\end{align*}
By ($\mathrm{EF}_{n-1}$), we have 
\begin{align*} 
\widetilde F(B_{x},X)&=\widetilde \calf(\EGK(B^{(n-1)});q^{1/2},X)  \\
&=\widetilde F(B^{(n-1)},X)=\calf(H', q^{1/2}, X) 
\end{align*}
for any $x \in M_{1,n-1}(\frko)$. 
Hence we have 
\begin{align}
\label{eq.1.1}
\widetilde F(B,X)
=&D(\frke_n,\frke_{n-1},\xi_{B^{(n-1)}};X)\widetilde F(B^{(n-1)}, q^{1/2} X) \tag{1.1}\\
&+\eta_B D(\frke_n,\frke_{n-1},\xi_{B^{(n-1)}};X^{-1})\widetilde F(B^{(n-1)},q^{1/2} X). \notag
\end{align}
Thus by Definition \ref{def.4.4}
\begin{align*}
\widetilde F(B,X)=\calf(H, q^{1/2}, X) =\calf(\EGK(B);q^{1/2},X).
\end{align*}
In the case that $n$ is even, similarly we have 
\begin{align}
\label{eq.1.2}
\widetilde F(B,X)
=&C(\frke_n,\frke_{n-1},\xi_{B};X)\widetilde F(B^{(n-1)}, q^{1/2} X) \tag{1.2}\\
&+ C(\frke_n,\frke_{n-1},\xi_{B};X^{-1})\widetilde F(B^{(n-1)},q^{1/2} X), \notag
\end{align}
and 
\begin{align*}
\widetilde F(B,X)=\calf(\EGK(B);q^{1/2},X).
\end{align*}

{\bf Step 2:} \ Let $n$ be odd, and assume that $B$ belongs to category (I) and that  $a_1+\cdots+a_{n-1}$ is even.
Then, $\GK(B)$ satisfies the condition of (Case 1) of Proposition \ref{prop.4.4}.
In this case, $B^{(n-2)}$ is a reduced form with $\GK(B^{(n-2)})=\ua^{(n-2)}$. For any $y \in M_{2,n-2}(\frko)$, we have
$B_y - B^{(n-2)} \in \calm^0(\GK(B^{(n-2)}))$, and by Theorem \ref{th.3.1} and the induction assumption ($\mathrm{EF}_{n-2}$), we have
\[\widetilde F(B_y,X)=\widetilde F(B^{(n-2)},X).\]
By (2) of [\cite{Ike-Kat}, Lemma 6.2 (2)], we have $\eta_{B^{(n-2)}}=\eta_B$. In view of Lemma \ref{lem.7.2}, we may assume that
$B$ is a strongly reduced. By (2) of Theorem \ref{th.5.4}, we have
\begin{align}
\label{eq.2.1}
&  \widetilde F(B,X) \tag{2.1}
\\
&=q^{-1}(X^{-1}+X)^{-1}\Bigl\{ \sum_{W \in {\bf D}_{2,1}/GL_2(\frko )} \widetilde F(B[1_{n-2} \bot W],X) \notag \\
&\quad -q^{\frke_{n-2}/2}X^{(-\frke_n+\frke_{n-2})/2-1}\widetilde F(B^{(n-2)},qX) \notag \\
&\quad  -\eta_B q^{\frke_{n-2}/2}X^{(\frke_n-\frke_{n-2})/2+1}\widetilde F(B^{(n-2)},qX^{-1})
\vphantom{\sum_{W \in {\bf D}_{2,1}/GL_2(\frko )}}
\Bigr\}.\notag
\end{align}
By (2) of Lemma \ref{lem.7.2}, $B[1_{n-2} \bot W]$ is $GL_n(\frko)$-equivalent to  a reduced  form whose GK invariant is $(a_1,\ldots,a_{n-2},a_{n-1}+1,a_{n-1}+1)$
for any $W \in {\bf D}_{2,1}$. 
Moreover, we have $\eta_{B[1_{n-2} \bot W]}=\eta_B$ for any $W \in {\bf D}_{2,1}$.
Hence, by a direct calculation using (\ref{eq.1.1}) and (\ref{eq.1.2}),  we have 
\begin{align}\label{eq.2.2}
&\widetilde F(B[{1}_{n-2}\bot W],X) \tag{2.2} \\
&=q^{(\frke_{n-2}-1)/2} 
\Biggl\{ 
\frac{X^{(-\frke_n+\frke_{n-2})/2-2}} {(q^{1/2}X)^{-1}-q^{1/2}X } 
\widetilde F(B^{(n-2)},qX)  \notag\\
&\quad  +\eta_B
\frac{X^{(\frke_n-\frke_{n-2})/2+2}} {(q^{1/2}X^{-1})^{-1}-q^{1/2}X^{-1} }\widetilde F(B^{(n-2)},qX^{-1})\Biggr\} \notag \\
&+\eta_B \frac{q^{\frke_{n-1}/2}(q-1)(X+X^{-1}) } { ((q^{1/2}X)^{-1}-q^{1/2}X)((q^{1/2}X^{-1})^{-1}-q^{1/2}X^{-1})} \notag \\
&\times \widetilde F(B^{(n-2)},X) . \notag
\notag
\end{align}
for any $W \in {\bf D}_{2,1}$. 
Hence, by (\ref{eq.2.1}) and (\ref{eq.2.2}) we have 
\begin{align}
\label{eq.2.3}
\widetilde F(B,X)=&q^{\frke_{n-2}/2-1/2} 
\Biggl\{{X^{(-\frke_n+\frke_{n-2})/2-1} \over (q^{1/2}X)^{-1}-q^{1/2}X } \widetilde F(B^{(n-2)},qX) \tag{2.3}\\
&+ \eta_B{X^{(\frke_n-\frke_{n-2})/2+1} \over (q^{1/2}X^{-1})^{-1}-q^{1/2}X^{-1} }\widetilde F(B^{(n-2)},qX^{-1})\Biggr\} \notag\\
&+\eta_B {q^{\frke_{n-1}/2}(q-q^{-1}) \over ((q^{1/2}X)^{-1}-q^{1/2}X)((q^{1/2}X^{-1})^{-1}-q^{1/2}X^{-1})} \notag\\
&\times \widetilde F(B^{(n-2)},X). \notag
\end{align}
Hence 
\[\widetilde F(B, X)=\widetilde \calf(\EGK(B), q^{1/2}, X)\]
 by ($\mathrm{EF}_{n-2}$), (1) of Proposition \ref{prop.4.3} and (2) of Proposition \ref{prop.4.5}.

\noindent
{\bf Step 3:} \ Let $n$ be even, and  assume that $B$ belongs to category (I) and that  $a_1+\cdots +a_n$ is odd.  Then, $\GK(B)$ satisfies the condition of (Case 2) of Proposition \ref{prop.4.4}. 
Then we prove the following equality:
 \begin{align}
\label{eq.3.1}
\widetilde F(B,X) &=q^{\frke_{n-2}/2} \Biggl \{{X^{(-\frke_n+\frke_{n-2})/2 -1}  \over X^{-1}-X}\widetilde F(B^{(n-2)},qX)  \tag{3.1} \\
&+{X^{(\frke_n- \frke_{n-2})/2 +1}  \over X-X^{-1}}\widetilde F(B^{(n-2)},qX^{-1})\Biggr \}.\notag
\end{align}
This combined with (2) of Proposition \ref{prop.4.3} and (2) of Proposition \ref{prop.4.5} proves that we have 
\begin{align*}
\widetilde F(B,X)=\calf(\EGK(B);q^{1/2},X).
\end{align*}
 Let $i_1=i_1(B)$ and $i_2=i_2(B)$ be those defined in Definition \ref{def.7.1}.  To prove (\ref{eq.3.1}), we may assume that $B$ is a strongly reduced form and that it satisfies the conditions (CEI-1) and (CEI-2)  in   Lemma \ref{lem.7.4}.
Similarly to Step 2,  by Theorem \ref{th.3.1} and the induction assumption ($\mathrm{EF}_{n-2}$), we have
\[\widetilde F(B_y,X)=\widetilde F(B^{(n-2)},X).\]
Let $r$ be a positive integer. Then, by (1.1) of Theorem \ref{th.5.4} and by a simple computation,
we see that (3.1) holds for $\widetilde F(B[1_{n-2} \bot \vpi^{r-1} 1_2],X)$ if  it  holds for  
$\widetilde F(B[1_{n-2} \bot \vpi^r 1_2],X)$. Therefore, to show  (\ref{eq.3.1}) we may assume that 

\bigskip

\noindent
(CEI-3) $a_{n-1} \ge \max(2\lambda_{\frkl_{B^{(n-2)}}}+2e_0, 2\ord(\det B^{(n-2)}) +2e_0(n+2))$.

\bigskip

\noindent
Let $k=\kappa(B)$. Then $k$ is a positive integer such that $k<\ord(\frkD_{B^{(n-2)}})$. We prove (\ref{eq.3.1}) by induction on $k$. First we prove (\ref{eq.3.1}) for any strongly reduced form $B \in \calh_n(\frko)$ with $\kappa(B)=1$ satisfying the conditions (CEI-1),(CEI-2) and (CEI-3).  In this case, by (2) of Lemma \ref{lem.7.5}, 
$\frke_{B_x}=2+(\frke_{n-2}+\frke_n)/2$ for any $x \in M_{1,n-1}(\frko)$ and it does not depend on the choice of $x$, which will be denoted by $\widetilde \frke$. Hence we can take a
 positive integer satisfying the condition in Theorem \ref{th.5.2}. Then, as in the proof of Theorem \ref{th.5.3}, we have
\begin{align*}
\widetilde F(B[1_{n-1} \bot \vpi],X)&=X\widetilde F(B,X) \\
&+q^{\widetilde \frke/4}X^{(-\frke_n+\widetilde \frke)/2-1}\\
&\times q^{-m_0(n-1)}\sum_{x \in M_{1,n-1}(\frko )/\frkp^{m_0}  M_{1,n-1}(\frko )}\widetilde F(B_x,q^{1/2}X).
\end{align*}
By (2) of Theorem \ref{th.5.3}, we have
\begin{align*}
 \widetilde F(B_x,q^{1/2}X) &=q^{{\widetilde \frke}/4}(q^{1/2}X)^{(-\widetilde \frke+\frke_{n-2})/2} \widetilde F(B^{(n-2)},qX)\\
&+\eta_x
q^{{\widetilde \frke}/4}(q^{1/2}X)^{(\widetilde \frke-\frke_{n-2})/2} \widetilde F(B^{(n-2)},X^{-1}),
\end{align*}
where $\eta_x=\eta_{B_x}$.
Hence
\begin{align*}
\widetilde F(B[1_{n-1} \bot \vpi],X)&=X\widetilde F(B,X) \\
&+q^{\frke_{n-2}/2}X^{(-\frke_n+\frke_{n-2})/2-1}\widetilde F(B^{(n-2)},qX) \\
& +\widetilde F(B^{(n-2)},X^{-1})q^{\widetilde \frke/2}X^{(-\frke_n+2\widetilde \frke-\frke_{n-2})/2-1}\\
&\times q^{-m_0(n-1)}\sum_{x \in M_{1,n-1}(\frko )/\frkp^{m_0}  M_{1,n-1}(\frko )}\eta_x.
\end{align*}
Put 
\[I=\sum_{x \in M_{1,n-1}(\frko )/\frkp^{m_0}  M_{1,n-1}(\frko )}\eta_x.\]
Then, by (2) of Lemma \ref{lem.7.5}, we have
\begin{align*}
I =cq^{m_0(n-1)-1}\sum_{y \in \frko/\frkp} \langle 1+y\vpi^{\ord(\frkD_{B^{(n-2)}})-1},D_{B^{(n-2)}}\rangle,
\end{align*}
where $c$ is a constant independent of $y$. 
By (2) of Lemma \ref{lem.7.5}, the homomorphism 
\[\frko/\frkp \ni y \mapsto \langle 1+y\vpi^{\ord(\frkD_{B^{(n-2)}})-1}, (-1)^{(n-2)/2} \det B^{(n-2)} \rangle \in \{\pm 1 \}\]
is non-trivial, and we have $I=0$.
Hence 
\begin{align*}
\widetilde F(B[1_{n-1} \bot \vpi],X)&=X\widetilde F(B,X) \\
&+q^{\frke_{n-2}/2}X^{(-\frke_n+\frke_{n-2})/2-1}\widetilde F(B^{(n-2)},qX) .
\end{align*}
We also have
\begin{align*}
\widetilde F(B[1_{n-1} \bot \vpi],X^{-1})&=X^{-1}\widetilde F(B,X^{-1})\\
&+q^{ \frke_{n-2}/2}X^{(\frke_n-\frke_{n-2})/2+1}\widetilde F(B^{(n-2)},qX^{-1}).
\end{align*}
By Proposition \ref{prop.2.1} we have 
\[\widetilde F(B[1_{n-1} \bot \vpi],X^{-1})=\widetilde F(B[1_{n-1} \bot \vpi],X), \]
\[\widetilde F(B,X^{-1})=\widetilde F(B,X),
\] and 
\[\widetilde F(B^{(n-2)},X^{-1})=\widetilde F(B^{(n-2)},X).\]
Hence we prove the equality (\ref{eq.3.1}) in the case $\kappa(B)=1$.\\
 Let $2 \le k < \ord(\frkD_{B^{(n-2)}})$, and  assume that (\ref{eq.3.1}) holds for any $B' \in \calh_n(\frko)$ with  $\kappa(B')<k$ satisfying the conditions (CEI-1),(CEI-2) and (CEI-3).  Let $B$ be a strongly reduced form  satisfying the conditions (CEI-1),(CEI-2) and (CEI-3) such that  $\kappa(B)=k$.  Then, by (2) of Lemma \ref{lem.7.5}, $\frke_{B_x}=k+(\frke_{n-2}+\frke_n)/2+1$ for any $x \in M_{1,n-1}(\frko)$.  Hence, by (1.2) of Theorem \ref{th.5.4},
\begin{align*}
\widetilde F(B,X)(X^{k-1}-X^{1-k}) &=\widetilde F(B[1_{n-1} \bot \vpi],X)(X^k-X^{-k}) \\
&+q^{\frke_{n-2}/2}X^{-k-1+(\frke_{n-2}-\frke_n)/2}\widetilde F(B^{(n-2)},qX)\\
&-q^{\frke_{n-2}/2}X^{k+1+(-\frke_{n-2}+\frke_n)/2}\widetilde F(B^{(n-2)},qX^{-1}).
\end{align*}
We note that $B[1_{n-1} \bot \vpi]$ is a strongly reduced form  satisfying the conditions (CEI-1), (CEI-2) and (CEI-3) with  $\kappa(B[1_{n-1} \bot \vpi])=k-1$. Hence, by the induction assumption,  we prove (\ref{eq.3.1}) for $B$. This completes the induction.
$\hfill\Box$

\section{Examples}
(1)  Let $G=(n_1,\ldots,n_r;m_1,\ldots,m_r;\zeta_1,\ldots,\zeta_r)$ be an EGK datum of length $n$. 
For $1 \le i \le n$ we define $\widetilde m_i$ as
\[\widetilde m_i =m_j  \text{ if } n_1+\cdots +n_{j-1} +1 \le i \le n_1+\cdots +n_j, \]
and for such $\widetilde m_1,\ldots,\widetilde m_n$ we define the integers 
 $\frke_1,\ldots,\frke_n$ as in Definition \ref{def.4.3}.

(1.1) An EGK datum of length $2$ is one of the following forms
\begin{itemize}
\item[(a)] $G=(1,1;m_1,m_2;1,\zeta_2)$ with $m_1 <m_2$ and $\zeta_2 \in \calz_3$
\item[(b)] $G=(2;m_1;\zeta_1)$ with $\zeta_2 \in \{\pm 1\}$.
\end{itemize}
 Put $\xi=\zeta_2$ or $\xi=\zeta_1$ according as case (a) or case (b). Then 
\[H=(\widetilde m_1,\widetilde m_2;1,\xi)\]
 is a naive EGK datum such that $\Ups_2(H)=G$, and by a simple calculation combined with  Proposition  \ref{prop.4.1}, $\widetilde \calf(G;Y,X)$ can be expressed as 
\begin{align*}
\widetilde \calf(G;Y,X) &=\sum_{i=0}^{\frke_1} Y^i X^{-\frke_2/2+i}\sum_{j=0}^{\frke_2/2-i} X^{2j}\\
&- \xi \sum_{i=0}^{\frke_1} Y^{i-1}X^{-\frke_2/2+i+1}\sum_{j=0}^{\frke_2/2-i-1}X^{2j}. 
\end{align*} 
Let $B \in \calh_2(\frko )^{\rm{nd}}$. Then by Theorem \ref{th.1.1}, we have
\[\widetilde F(B,X)=\widetilde \calf(\EGK(B);q^{1/2},X).\]
This coincides with [\cite{Ot}, Corollary 5.1].
\bigskip 

\noindent
(1.2) An EGK datum of length $3$ is one of the following forms:
\begin{itemize}
\item[(a)] $G=(1,1,1;m_1,m_2,m_3;1,\zeta_2,\zeta_3)$ with $\zeta_2 \in \calz_3$, and $\zeta_3 \in \{\pm 1 \}$
\item[(b)] $G=(1,2;m_1,m_2;1,\zeta_2)$ with  $\zeta_2 \in \{\pm 1 \}$
\item[(c)] $G=(2,1;m_1,m_2;\zeta_1,\zeta_2)$ with $\zeta_1 \in \calz_3$ and $\zeta_2 \in \{\pm 1 \}$
\item[(d)]$G=(3;m_1;1)$.
\end{itemize}
We put 
\[\xi=
\begin{cases} \zeta_2 &  \text{ in case (a) }\\
\zeta_1 & \text{ in case (c)} \\
1  &  \text { in case (b) or case (d), and $\widetilde m_1+\widetilde m_2$ is even } \\
0  & \text { in case (b) or case (d), and $\widetilde m_1+\widetilde m_2$ is odd, }
\end{cases}\]
and 
\[\eta=
\begin{cases}
 \zeta_3 & \text{ in case (a) }\\
\zeta_2 & \text{ in case (b) or (c)} \\
1  &  \text { in case  (d).}
\end{cases}\]
Moreover put $\frke_2'=2[(a_1+a_2+1)/2]$.  Then, 
\[H=(\widetilde m_1,\widetilde m_2,\widetilde m_3;1,\xi,\eta)\]
 is a naive EGK datum such that $\Ups_3(H)=G$, and  by a simple calculation combined with Proposition \ref{prop.4.1}, $\widetilde \calf(G;Y,X)$ can be expressed as 
\begin{align*}
\widetilde \calf(G;Y,X)&=X^{-\frke_3/2}\Biggl \{\sum_{i=0}^{\frke_1} (Y^2X)^i \sum_{j=0}^{\frke_2'/2-i-1} (YX)^{2j} \\
&+ \eta X^{\frke_3}\sum_{i=0}^{\frke_1} (Y^2X^{-1})^i \sum_{j=0}^{\frke_2'/2-i-1} (YX^{-1})^{2j} \\
&+\xi^2 Y^{\frke_2'}X^{\frke_2'-\frke_1} \sum_{j=0}^{\frke_3-2\frke_2'+\frke_1} (\xi X)^j  \sum_{i=0}^{\frke_1}X^i \Biggr \}.
\end{align*}
Let $B \in \calh_3(\frko )^{\rm{nd}}$. Then by Theorem \ref {th.1.1}, we have
\[\widetilde F(B,X)=\widetilde \calf(\EGK(B);q^{1/2},X).\]
This essentially coincides with [\cite{Kat1}, Example (3)] and [\cite{Wed}, (2.8)] in the case $F=\QQ_p$.

\medskip
\noindent
(2) Let $q$ be odd, and let
\[B \sim \vpi^{a_1}u_1 \bot \cdots \bot \vpi^{a_n}u_n \quad (a_1 \le \cdots \le a_n, \ u_1,\ldots, u_n \in \frko^{\times})\]
be a diagonal Jordan decomposition of  $B \in \calh_n(\frko)^{{\rm nd}}$.
 Put
\[\vep_i=\begin{cases} \xi_{B^{(i)}} & \text{ if $i$ is even} \\
 \eta_{B^{(i)}} & \text{ if $i$ is odd.}
 \end{cases}
 \]
 Then $H=(a_1,\ldots,a_n;\vep_1,\ldots,\vep_n)$ is a naive EGK datum  such that $\Ups_n(H)=\EGK(B)$, and by Proposition
 \ref{prop.4.1} and Theorem \ref{th.1.1}, we can get an explicit formula for $\widetilde F(B,X)$ in terms of $H$, which essentially coincides with
 [\cite{Kat1}, Theorem 4.3] in the case  $F=\QQ_p$. If $F$ is an unramified extension of  $\QQ_2$,  we have an algorithm for giving a naive EGK datum associated with $\EGK(B)$ for  $B \in  \calh_n(\frko)^{{\rm nd}}$ from its Jordan decomposition, and  we  can also give an explicit formula for $\widetilde F(B,X)$ in terms of it . This essentially coincides with
 [\cite{Kat1}, Theorem 4.3] in the case  $F=\QQ_2$ (cf. \cite{C-I-K-L-Y}).

\medskip
\noindent
(3) Fourier coefficients of the Schottky form.

Let $S_k(\mathrm{Sp}_g(\mathbb{Z}))$ be the space of all Siegel cusp form of weight $k$ and genus $g$.
The Schottky form $J\in S_8(\mathrm{Sp}_4(\mathbb{Z}))$ is defined by
\[
J=-\frac{1}{2^{14}\cdot 3^2\cdot 5\cdot 7}\left(\Theta_{E_8\oplus E_8}-\Theta_{D_{16}^+}\right),
\]
where $E_8$ and $D_{16}^+$ are unique indecomposable positive definite even unimodular lattices of size $8$ and $16$, respectively.
It is known that $\dim_\mathbb{C} S_8(\mathbb{Sp}_4(\mathbb{Z}))=\CC\cdot J$.
Let 
\[
J(Z)=\sum_{B\in \Lambda_4^+} A_J(B)\exp(2\pi\iu \mathrm{tr}BZ)
\]
be the Fourier expansion of $J$, where $\Lambda_4^+$ is the set of all positive definite half-integral symmetric matrices of size $4$.
It is known (cf. \cite{Ike1}) that  $J$ is a lift of $\Delta(z)=q\prod_{n=1}^\infty(1-q^n)^{24}\in S_{12}(\SL_2(\mathbb{Z}))$ to $S_8(\mathrm{Sp}_4(\mathbb{Z}))$.
Let $S_{k+(1/2)}^+(\Gamma_0(4))$ be the Kohnen plus space of weight $k+(1/2)$.
Define $\delta(z)=\sum_{n=1}^\infty c(n) q^n\in S_{13/2}^+(\Gamma_0(4))$ by
\[
\delta(z)=\frac{1}{12}\left(\theta(z)E_6(4z)-120\calh_{5/2}(z)E_4(4z)\right)
=q-56q^4+120q^5+\cdots,
\]
where $\tht(z)=\sum_{n\in\ZZ} q^{n^2}$ and $\calh_{5/2}(z)$ is the Cohen Eisenstein series (cf. \cite{cohen}) of weight $5/2$.
Then we have $S_{13/2}^+(\mathrm{SL}_2(\mathbb{Z}))=\CC\cdot \delta$.
Let  $\{\alpha_p, \alpha_p^{-1}\}$ be the Satake parameter of $\Delta(z)$.
Then by \cite{Ike1}, the Fourier coefficient $A_J(B)$ is given by
\[
A_J(B)=\frac{1}{120}c(\frkD_B)\frkf_B^{11/2}\prod_{p|\frkf_B} \tilde F_p(B, \alpha_p),
\]
where $\frkD_B$ is the discriminant of $\mathbb{Q}(\sqrt{\det B})$ and $\frkf_B$ is the positive integer such that $\det(2B)=\frkD_B\frkf_B^2$.
Let $B_i\in\Lam_4^+$ \ ($i=1,\ldots, 10$) be as follows:
\[
\begin{array}{|c|c|c|c|c|}
\noalign{\hrule\vskip -4.85pt}
\vphantom{B^{2^{2^{2^2}}}}
2B_1 & 2B_2 & 2B_3 & 2B_4 & 2B_5 \\
\noalign{\hrule}
\vphantom{\left(\begin{smallmatrix} 
0 \\ 0 \\ 0 \\ 0  \end{smallmatrix}\right)^{2^2}_{2_2}}
 { \left(\begin{smallmatrix} 2 & 1 & 1 & 1 \\ 1 & 2 & 1 & 1 \\ 1 & 1 & 2 & 1 \\ 1 & 1 & 1 & 2  \end{smallmatrix}\right)}
&
  {\left(\begin{smallmatrix} 2 & 1 & 1 & 1 \\ 1 & 2 & 1 & 0 \\ 1 & 1 & 2 & 1 \\ 1 & 0 & 1 & 6  \end{smallmatrix}\right)}
 &
  { \left(\begin{smallmatrix} 2 & 1 & 0 & 1 \\ 1 & 2 & 0 & 0 \\ 0 & 0 & 2 & 0 \\ 1 & 0 & 0 & 4  \end{smallmatrix}\right)}
 &
  {\left(\begin{smallmatrix} 2 & 0 & 0 & 1 \\ 0 & 2 & 0 & 1 \\ 0 & 0 & 2 & 1 \\ 1 & 1 & 1 & 4  \end{smallmatrix}\right)}
 &
  {\left(\begin{smallmatrix} 2 & 0 & 0 & 0 \\ 0 & 2 & 0 & 0 \\ 0 & 0 & 4 & 2 \\ 0 & 0 & 2 & 6  \end{smallmatrix}\right)}
 \\
 \noalign{\hrule \smallskip \hrule}
\vphantom{B_7^{2^2}} 2B_6 & 2B_7 & 2B_8 & 2B_{9} & 2B_{10} \\
\noalign{\hrule}
\vphantom{\left(\begin{smallmatrix} 
0 \\ 0 \\ 0 \\ 0  \end{smallmatrix}\right)^{2^2}_{2_2}}
  { \left(\begin{smallmatrix} 2 & 1 & 1 & 1 \\ 1 & 2 & 0 & 0 \\ 1 & 0 & 6 & 2 \\ 1 & 0 & 2 & 6  \end{smallmatrix}\right)}
 & {\left(\begin{smallmatrix} 2 & 1 & 1 & 1 \\ 1 & 4 & 0 & 0 \\ 1 & 0 & 4 & 0 \\ 1 & 0 & 0 & 4  \end{smallmatrix}\right)}
 & {\left(\begin{smallmatrix} 4 & 2 & 2 & 2 \\ 2 & 4 & 2 & 2 \\ 2 & 2 & 4 & 2 \\ 2 & 2 & 2 & 4  \end{smallmatrix}\right)}
 & {\left(\begin{smallmatrix} 2 & 0 & 0 & 0 \\ 0 & 6 & 2 & 2 \\ 0 & 2 & 6 & 2 \\ 0 & 2 & 2 & 6  \end{smallmatrix}\right)}
 & {\left(\begin{smallmatrix} 4 & 0 & 0 & 2 \\ 0 & 4 & 0 & 2 \\ 0 & 0 & 4 & 2 \\ 2 & 2 & 2 & 8  \end{smallmatrix}\right)}  
  \\
 \noalign{\hrule}
\end{array}
\]
Then we have $\frkD_{B_1}=\cdots=\frkD_{B_{10}}=5$.
We show invariants of these matrices as follows.
The calculation of $A_J(B)$\rq{}s are due to Breulmann-Kuss \cite{br-ku}.
\[
\begin{array}{|r|r|r|r|r|r|}
\noalign{\hrule\vskip -7.2pt}
\vphantom{B^{2^{2^{2^{2^2}}}}}
& B_{1} &  B_{2} &  B_{3} &  B_{4} &  B_{5} 
\\  \noalign{\hrule}
\vphantom{(B)^{2^2}}\frkf_B & 1 & 2 & 2 & 2 & 2^2 
\\ \noalign{\hrule}
\vphantom{(B)^{2^2}}\EGK_2(B) & {\scriptsize 
\left(\begin{smallmatrix} 0 & 0 & 0 & 0 \\ & & & - \end{smallmatrix}\right) }&  
{\scriptsize \left(\begin{smallmatrix} 0 & 0 & 1 & 1 \\ & - & & - \end{smallmatrix}\right) }&  {\scriptsize 
\left(\begin{smallmatrix} 0 & 0 & 0 & 2 \\ & & + & - \end{smallmatrix}\right) }&  {\scriptsize 
\left(\begin{smallmatrix} 0 & 0 & 1 & 1 \\ & + &  & - \end{smallmatrix}\right) }& {\scriptsize  
\left(\begin{smallmatrix} 0 & 1 & 1 & 2 \\ + & & + & - \end{smallmatrix}\right) }
\\ \noalign{\hrule}
\vphantom{(B)^{2^2}}A_J(B) & 1 & -56 & 8 & 72 & 1856 
\\ \noalign{\hrule\smallskip\hrule}
\vphantom{B_7^{2^2}} &  B_{6} &  B_{7} &  B_{8} &  B_{9} &  B_{10} 
\\ \noalign{\hrule}
\vphantom{(B)^{2^2}}\frkf_B & 2^2 & 2^2 & 2^2 & 2^4 & 2^4  
\\ \noalign{\hrule}
\vphantom{(B)^{2^2}}\EGK_2(B) & {\scriptsize 
\left(\begin{smallmatrix} 0 & 0 & 2 & 2 \\ & - & & - \end{smallmatrix}\right) }&  {\scriptsize 
\left(\begin{smallmatrix} 0 & 0 & 2 & 2 \\  & + &  & - \end{smallmatrix}\right) }&  {\scriptsize 
\left(\begin{smallmatrix} 1 & 1 & 1 & 1 \\ & & & - \end{smallmatrix}\right) }
&  {\scriptsize 
\left(\begin{smallmatrix} 0 & 2 & 2 & 2 \\ + &  &  & - \end{smallmatrix}\right) }& {\scriptsize  
\left(\begin{smallmatrix} 1 & 1 & 2 & 2 \\  & + &  & - \end{smallmatrix}\right) }
\\ \noalign{\hrule}
\vphantom{(B)^{2^2}}A_J(B) & 1344 & 2368 & 2880 
& -159232 & 143872 
\\ \noalign{\hrule}
\end{array}
\]
Here, $\EGK_2(B)$ is  the $2$-adic EGK datum.
The first row $\EGK_2(B)$ is the Gross-Keating invariant of $B$.
In the second row of $\EGK_2(B)$, the sign of $\zeta_r$ is indicated at the $n_1+\cdots+n_r$-th entry.
Put $\tilde F_i(X)=\widetilde\calf(\EGK_2(B_i); q^{1/2}, X)$ for $i=1, \ldots, 10$.
Then we have
\begin{align*}
&\tilde F_1(X)=1, 
&\tilde F_2(X)=
S_1+q^{-1/2}(1-q),
\\
&\tilde F_3(X)=
S_1+q^{-1/2},
&\tilde F_4(X)=
S_1+q^{-1/2}(1+q),
\end{align*}
\begin{align*}
\tilde F_5(X)=
&S_2+q^{-1/2}S_1+1+q,
\\
\tilde F_6(X)=
&S_2+q^{-1/2}(1-q)S_1+q,
\\
\tilde F_7(X)=
&S_2+q^{-1/2}(1+q)S_1+2+q,
\\
\tilde F_8(X)=
&S_2+q^{-1/2}(1+q^2)S_1+1+q^2,
\\
\tilde F_{9}(X)=
&S_3+q^{-1/2}S_2+(1+q^2)S_1+q^{-1/2},
\\
\tilde F_{10}(X)=
&S_4+q^{-1/2}(1+q^2)S_3+(1+q+2q^2)S_2 \\
&\qquad\quad +q^{-1/2}(1+2q^2+q^3)S_1+1+q+2q^2.
\end{align*}
where $S_k=X^k+X^{-k}$ \ ($k\in\ZZ$).
These are consistent with the calculation of \cite{br-ku}.

\end{document}